\newtheorem{theorem}{Theorem}
\newtheorem{corollary}[theorem]{Corollary}
\newtheorem{lemma}{Lemma}
\newtheorem{definitions}[lemma]{Definitions}
\newtheorem{fact}{Fact}
\newtheorem{problem}[theorem]{Problem}
\newtheorem{remark}[theorem]{Remark}
\DeclareMathOperator{\Min}{Min}
\title[Free monoids and  generalized metric spaces]{Free monoids and generalized metric spaces}
\author[M.Kabil]{Mustapha Kabil}
\address{Laboratoire Math\'ematiques et Applications, D\'epartement de Math{\'e}matiques, Facult\'e des Sciences et Techniques, Universit\'e Hassan II -Casablanca, BP 146 Mohammedia, Morocco. }
\email{kabilfstm@gmail.com}
\author [M. Pouzet]{Maurice Pouzet**}
\address{Univ. Lyon, Universit\'e Claude-Bernard Lyon1, CNRS UMR 5208, Institut Camille Jordan,  43 bd. 11 Novembre 1918, 69622 Villeurbanne Cedex, France and Mathematics \& Statistics Department, University of Calgary, Calgary, Alberta, Canada T2N 1N4}
\email{pouzet@univ-lyon1.fr, mpouzet@ucalgary.ca }
\thanks{Supports provided  by PHC France-Maghreb 14MAG14, ICJ of University Claude-Bernard and  University Hassan II are gratefully acknowledged}
 \author [I.G.Rosenberg]{Ivo G.Rosenberg}
\address{Universit\'{e} de Montr\'{e}al,
C.P.6128,Succ.``Centre-ville'',
Montr\'{e}al P.Q. H3C 3J7,  Canada and Department of Mathematics and Statistics, Masaryk University, Brno, Czeck Republic,}
\email{rosenb@DMS.Umontreal.ca}
\date{\today}
\keywords{Metric spaces, injective envelope, Hyperconvex metric spaces, Transition systems, Ordered sets, Well quasi order, Higman ordering, MacNeille completion, Free monoid}
\subjclass{Primary 06A15, 06D20, 46B85, 68Q70; Secondary  68R15 }
\begin{document}

\maketitle

To the memory of Michel-Marie Deza,  with affection and admiration.

\begin{abstract} Let $A$ be an ordered alphabet, $A^{\ast}$ be the free monoid over $A$ ordered by   the Higman ordering, and let $F(A^{\ast})$ be the set of  final segments of $A^{\ast}$. With the operation of concatenation, this set is a monoid. We show that the submonoid $F^{\circ}(A^{\ast}):= F(A^{\ast})\setminus \{\emptyset\}$  is  free.  The MacNeille completion $N(A^{\ast})$ of  $A^{\ast}$ is a submonoid of $F(A^{\ast})$. As a corollary, we obtain that the monoid $N^{\circ}(A^{\ast}):=N(A^{\ast})\setminus \{\emptyset\}$ is free. We give an interpretation of the freeness of $F^{\circ}(A^{\ast})$ in the category of metric spaces over the Heyting algebra $V:= F(A^{\ast})$,  with the non-expansive mappings as morphisms. Each final segment  of $A^{\ast}$ yields the injective envelope $\mathcal S_F$  of a two-element metric space  over $V$.  The uniqueness of the decomposition of   $F$ is due to the uniqueness of the block decomposition of the graph $\mathcal {G}_{F}$ associated to this injective envelope. \end{abstract}

\section{Introduction and presentation of the main results}

The original motivation of this  paper is the work of Quilliot \cite{Qu1,Qu2}. He considers reflexive and directed graphs as metric spaces; the distance between two vertices $x$ and $y$ of a graph  $G$ being,  instead of a non-negative real, the set $d_G(x,y)$ of words over a two-letters alphabet $\{+, -\}$ which code the zig-zag paths going from $x$ to $y$. Then, he  uses concepts  of the theory of metric spaces like  \emph{balls}, \emph{non-expansive   maps},  
  and \emph{Helly property}. This point of view was extended to transition systems in \cite{pouzet-rosenberg}.   Indeed, one may view the graph $G$  as a transition system  $M$ over $\{+, -\}$ and the distance  as the language $d_M(x,y)$ accepted by the automaton $(M,\{x\}, \{y\})$ with initial state $x$ and final state $y$. In the case of reflexive and directed graphs, the values of the distance are final segments of the free monoid  $\{+, -\}^{\ast}$ equipped with the Higman ordering. To make the study of transitions systems over an alphabet $A$ closer to the graph case, it is convenient to suppose that the value of $d_M(x,y)$ determines the value of $d_M(y,x)$; for that, we suppose that the alphabet $A$ is equipped with an involution $-$ and each  transition system   $M:= (Q, T)$ is \emph{involutive}, in the sense that $(p,a, q)\in T$ if and only if $(q, \overline{a}, p)\in T$. Once the involution on $A$ is extended to the free monoid $A^{\ast}$ and then to the power set $\powerset({A^{\ast}})$, we have $d_M(x,y)= \overline {d_M(y,x)}$. Going a step further, we say that $M$ is \emph{reflexive} if  every letter occurs to every vertex, that is $(p, a, p)\in T$ for every $p\in Q$ and $a \in A$. In this case,  distances values are final segments of the free monoid $A^{\ast}$ equipped with the Higman ordering. 
 
 Structural properties of transition systems
rely upon algebraic properties of languages and conversely. In
fact, transition systems can be viewed as geometric objects
interpreting these algebraic properties. This paper is an illustration of this  claim.

We start with an  ordered alphabet $A$. Let $A^{\ast}$ be the free monoid equipped with the Higman ordering. Let $F(A^{\ast})$ be the set of final segments of $A^{\ast}$. The  concatenation of  words extends to $\powerset (A^{\ast})$; this operation defined by $XY:= \{\alpha\beta: \alpha\in X, \beta\in Y\}$ induces an operation on $F(A^{\ast})$  for which the set $A^{\ast}$ is neutral. Hence $F(A^{\ast})$ is a monoid.  Since it contains the empty set $\emptyset$ and $\emptyset$ has several decompositions (e.g. $\emptyset=\emptyset A^{\ast}= A^{\ast}\emptyset $), this monoid is not free.  Let  $F^{\circ}(A^{\ast}):= F(A^{\ast})\setminus \{\emptyset \}$ be the set of non-empty final segments of $A^{\ast}$.  This is submonoid of $F(A^{\ast})$ (see Subsection \ref{subsection-final} for definitions,  if needed). 

\begin{theorem}\label{prop1}
 $F^{\circ}(A^{\ast})$ is a free monoid.
 \end{theorem}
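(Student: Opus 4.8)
The plan is to prove freeness by the classical route: produce an additive \emph{length function}, prove that $F^{\circ}(A^{\ast})$ is \emph{equidivisible}, and then invoke the characterization that an equidivisible monoid carrying a length homomorphism onto $(\mathbb{N},+)$ that vanishes only at the identity is free on its indecomposable elements. Such a length function forces the identity to be the only unit and every element to be a finite product of indecomposables; equidivisibility together with the fact that a divisor of an indecomposable is trivial then forces the factorization to be unique (equidivisibility also makes the monoid cancellative). So the whole weight of the argument will fall on constructing the length function and proving equidivisibility.

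For $F\in F^{\circ}(A^{\ast})$ I would set $\ell(F):=\min\{|w|:w\in F\}$, the length of a shortest word of $F$. Since an element of $FG$ is a concatenation $\alpha\beta$ with $\alpha\in F$, $\beta\in G$ and conversely, one gets $\ell(FG)=\ell(F)+\ell(G)$; and $\ell(F)=0$ means $\varepsilon\in F$, which forces $F=A^{\ast}$ because $\varepsilon$ is the least element of $A^{\ast}$. Throughout I would use the elementary splitting property (the same one that makes $FG$ a final segment): if $\gamma$ dominates $\alpha\beta$ for the Higman order, then $\gamma=\gamma_{1}\gamma_{2}$ with $\gamma_{1}\ge\alpha$ and $\gamma_{2}\ge\beta$, obtained by cutting $\gamma$ between the images of $\alpha$ and of $\beta$ in an embedding witnessing $\alpha\beta\le\gamma$. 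A first consequence is left cancellativity: if $FX=FY$ and $t\in X\setminus Y$, choose $u\in F$ of minimal length; then $ut\in FY$ gives $ut=u't'$ with $u'\in F$, $t'\in Y$, and $|u'|\ge|u|$ forces $t'$ to be a suffix of $t$, so $t'\le t$ and $t\in Y$ — a contradiction.

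The core is equidivisibility: given $F_{1}F_{2}=G_{1}G_{2}$ with, say, $\ell(F_{1})\le\ell(G_{1})$, I claim $F_{1}$ left-divides $G_{1}$, from which the statement follows by left cancellation. The trick is to append shortest words and count lengths. Let $w\in G_{1}$ and let $s$ be a shortest word of $G_{2}$. Then $ws\in G_{1}G_{2}=F_{1}F_{2}$, so $ws=uv$ with $u\in F_{1}$, $v\in F_{2}$; since $|v|\ge\ell(F_{2})=\ell(G_{1})+\ell(G_{2})-\ell(F_{1})\ge\ell(G_{2})=|s|$, the prefix $u$ is already a prefix of $w$, so $u\le w$ and $w\in F_{1}$, proving $G_{1}\subseteq F_{1}$. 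Write $w=uw''$, so that $w''s=v\in F_{2}$. Now for an arbitrary $u'\in F_{1}$ we have $(u'w'')s=u'(w''s)\in F_{1}F_{2}=G_{1}G_{2}$, hence $(u'w'')s=pq$ with $p\in G_{1}$, $q\in G_{2}$; since $|q|\ge\ell(G_{2})=|s|$, $p$ is a prefix of $u'w''$, so $u'w''\in G_{1}$. Thus $w''$ lies in the final segment $T:=F_{1}\backslash G_{1}=\{t:F_{1}t\subseteq G_{1}\}$, and $w=uw''\in F_{1}T$. As $F_{1}T\subseteq G_{1}$ trivially, $F_{1}T=G_{1}$; then $F_{1}(TG_{2})=(F_{1}T)G_{2}=G_{1}G_{2}=F_{1}F_{2}$ and left cancellation gives $TG_{2}=F_{2}$, which is exactly equidivisibility. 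Hence $F^{\circ}(A^{\ast})$ is free.

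The step I expect to be the main obstacle is this equidivisibility claim, and more precisely the idea that one should test membership in the residual $F_{1}\backslash G_{1}$ by multiplying by a shortest word of the \emph{complementary} factor $G_{2}$, so that a crude length count forces two factorizations of a sufficiently long word to share a common prefix; the inclusion $G_{1}\subseteq F_{1}$ and the equality $F_{1}(F_{1}\backslash G_{1})=G_{1}$ both rest on this. The remaining pieces — the length function, the splitting property, cancellativity, and the formal deduction of freeness — are routine. Finally I note that the abstract's geometric viewpoint gives the uniqueness another way: encoding $F$ as the injective envelope $\mathcal S_{F}$ over $V=F(A^{\ast})$ and its associated graph $\mathcal G_{F}$, the indecomposable factors of $F$ correspond to the blocks of $\mathcal G_{F}$, and the factorization to the unique block decomposition of that graph.
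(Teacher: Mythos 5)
Your proof is correct, and it reaches equidivisibility by a genuinely different and more direct route than the paper. The paper also applies Levi's criterion with the same graduation $\gamma(X)=\min\{|x|:x\in X\}$, but it does not prove equidivisibility head-on: it first establishes a stronger \emph{decomposition property} (every pair $(V_1,V_2)$ is minimal above $V_1V_2$ and satisfies a convexity condition), shows abstractly that this implies cancellativity and equidivisibility, and proves the convexity condition by writing $U_1$ as a union of principal final segments and extracting, for each generator $\alpha_i$, its least proper left factor lying in $V_1$. Your argument replaces all of this by the single device of testing membership in the residual $T=\{t: F_1t\subseteq G_1\}$ after right-multiplying by a shortest word $s$ of the complementary factor $G_2$: the length count $|v|\ge \ell(F_2)\ge \ell(G_2)=|s|$ forces the two literal factorizations of $ws$ to share a prefix, giving $G_1\subseteq F_1$, then $G_1=F_1T$ and, by the left cancellativity you prove the same way, $F_2=TG_2$. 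All the individual steps check out (additivity of $\ell$, the fact that a prefix or suffix is below the word in the Higman order so that final segments absorb it, $T$ being a nonempty final segment). What the paper's heavier route buys is the decomposition property itself, which is reused later: it is exactly what Theorem~\ref{prop: collage} and Corollary~\ref{cor:productinjective} express in metric terms, and it drives the block-decomposition picture in Theorem~\ref{thm:blockdecomposition}. Your closing remark correctly identifies that geometric reformulation, but for the freeness statement alone your shorter argument suffices.
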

The existence (or not) of an involution on $A$ has no effet on the conclusion.
 
 The following  illustration of Theorem \ref{prop1} was proposed to us by J.Sakarovitch \cite{sakarovitch1}. An \emph{antichain} of  $A^{\ast}$ is any subset $X$ of $A^{\ast}$ such that any two distinct elements $\alpha$ and $\beta$ of $X$ are incomparable w.r.t. the Higman ordering. The set  $Ant (A^{\ast})$ of antichains of $A^{\ast}$ and the set $Ant_{<\omega}(A^{\ast})$ of finite antichains of $A^{\ast}$ are  submonoids of $\powerset( A^\ast)$; the sets $Ant^{\circ} (A^{\ast}):= Ant (A^{\ast})\setminus \{\emptyset\}$ and 
 $Ant_{<\omega}^{\circ}(A^{\ast}):= Ant_{<\omega}(A^{\ast})\setminus \{\emptyset\}$ of non-empty antichains are also submonoids. 
 From Theorem \ref{prop1}, we deduce:
 \begin{theorem} \label{antichainfree}
 The monoids  $Ant^{\circ} (A^{\ast})$ and $Ant^{\circ}_{<\omega}(A^{\ast})$  are free. 
 \end{theorem}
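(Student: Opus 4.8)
The plan is to realize $Ant^{\circ}(A^{\ast})$ (and $Ant^{\circ}_{<\omega}(A^{\ast})$) as a \emph{factor-closed} submonoid of $F^{\circ}(A^{\ast})$, and then to invoke Theorem~\ref{prop1}.

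For $X\subseteq A^{\ast}$ let $\uparrow X$ denote the final segment it generates. I would first record three elementary facts about the Higman order on $A^{\ast}$: (i) the product $XY$ of two antichains is again an antichain — if $\alpha\beta\le\alpha'\beta'$ with $\alpha,\alpha'\in X$, $\beta,\beta'\in Y$, split the witnessing embedding at the image of the last letter of $\alpha$ and run a short case analysis, using that a prefix and a suffix of a word lie below it together with the antichain hypotheses, to get $\alpha=\alpha'$, $\beta=\beta'$; (ii) $\Min(\uparrow X)=X$ for every antichain $X$; (iii) $\uparrow(XY)=\uparrow X\cdot\uparrow Y$, because the Higman order is compatible with concatenation and because $\gamma\ge\alpha\beta$ forces a factorisation $\gamma=\gamma_{1}\gamma_{2}$ with $\gamma_{1}\ge\alpha$, $\gamma_{2}\ge\beta$. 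By (ii) and (iii) the map $\phi\colon X\mapsto\uparrow X$ is an injective morphism of monoids from $Ant^{\circ}(A^{\ast})$ into $F^{\circ}(A^{\ast})$ (it sends the identity $\{\varepsilon\}$ to $A^{\ast}$), and it restricts to an injective morphism on $Ant^{\circ}_{<\omega}(A^{\ast})$. Writing $M:=\phi\bigl(Ant^{\circ}(A^{\ast})\bigr)$ and $M_{<\omega}:=\phi\bigl(Ant^{\circ}_{<\omega}(A^{\ast})\bigr)$, we have $Ant^{\circ}(A^{\ast})\cong M$ and $Ant^{\circ}_{<\omega}(A^{\ast})\cong M_{<\omega}$, so it suffices to prove that $M$ and $M_{<\omega}$ are free.

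The key step is a \emph{factorisation lemma}: if $\uparrow X=G\cdot H$ in $F^{\circ}(A^{\ast})$ with $X$ a non-empty antichain, then $G=\uparrow\Min(G)$ and $H=\uparrow\Min(H)$, with $\Min(G),\Min(H)$ non-empty antichains, and $\Min(G)\cdot\Min(H)=X$. I would prove it in three moves. First, every minimal element $z$ of $GH$ admits a factorisation $z=\gamma\delta$ with $\gamma\in\Min(G)$, $\delta\in\Min(H)$: starting from any factorisation $z=\gamma\delta$ ($\gamma\in G$, $\delta\in H$), minimality of $z$ shows $\gamma$ cannot be decreased inside $G$ nor $\delta$ inside $H$; in particular $\Min(G),\Min(H)\neq\emptyset$ since $X\neq\emptyset$. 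Second, every $\gamma\in G$ lies above some element of $\Min(G)$: fixing $\delta_{\ast}\in\Min(H)$ we get $\gamma\delta_{\ast}\in\uparrow X$, hence $\gamma\delta_{\ast}\ge x=\gamma'\delta'$ for some $x\in X$ with $\gamma'\in\Min(G)$, $\delta'\in\Min(H)$ (first move); splitting the embedding $\gamma'\delta'\le\gamma\delta_{\ast}$ at the image of the last letter of $\gamma'$ leaves two cases — if that letter falls inside $\gamma$ then $\gamma'\le\gamma$ directly, while if it falls inside $\delta_{\ast}$ then a prefix of $\gamma'$ embeds into $\gamma$ and a suffix $\gamma'_{2}$ of $\gamma'$ together with $\delta'$ embeds into $\delta_{\ast}$, so $\delta'\le\gamma'_{2}\delta'\le\delta_{\ast}$ forces $\delta'=\delta_{\ast}$ (both minimal in $H$), hence $\gamma'_{2}=\varepsilon$ and again $\gamma'\le\gamma$. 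Thus $G=\uparrow\Min(G)$, and an analogous argument gives $H=\uparrow\Min(H)$. Third, $\uparrow X=\uparrow\Min(G)\cdot\uparrow\Min(H)=\uparrow\bigl(\Min(G)\cdot\Min(H)\bigr)$ by (iii), and since $\Min(G)\cdot\Min(H)$ is an antichain by (i), fact (ii) gives $X=\Min(G)\cdot\Min(H)$.

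Finally I would assemble the pieces. The factorisation lemma says exactly that $M$ is factor-closed in $F^{\circ}(A^{\ast})$: if $w\in M$ and $w=uv$ in $F^{\circ}(A^{\ast})$, then $u,v\in M$. A factor-closed submonoid of a free monoid is free: taking a free basis $B$ of $F^{\circ}(A^{\ast})$ (it exists by Theorem~\ref{prop1}) and writing $w\in M$ as $w=b_{1}\cdots b_{k}$ with $b_{i}\in B$, two applications of factor-closedness put each $b_{i}$ in $M$, so $M=\langle M\cap B\rangle$ is freely generated by $M\cap B$; hence $M$, and with it $Ant^{\circ}(A^{\ast})$, is free. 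For $Ant^{\circ}_{<\omega}(A^{\ast})$ one repeats the argument with $M_{<\omega}$, which is a submonoid (a product of finite antichains is finite) and is factor-closed as well: in the factorisation lemma $(\gamma,\delta)\mapsto\gamma\delta$ is injective on $\Min(G)\times\Min(H)$ — two elements of an antichain, one a prefix of the other, are equal — so $|X|=|\Min(G)|\cdot|\Min(H)|$, and finiteness of $X$ forces $\Min(G)$ and $\Min(H)$ finite, i.e.\ $G,H\in M_{<\omega}$. The one genuinely delicate point is the factorisation lemma, and within it the embedding-splitting case analysis that yields $G=\uparrow\Min(G)$; everything else is bookkeeping.
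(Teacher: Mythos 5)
Your proof is correct and follows essentially the same route as the paper: the embedding $X\mapsto\uparrow X$, your factorisation lemma (which is the paper's Lemma~\ref{lem:product} together with Equation~(\ref{eq:min})), and the reduction to Theorem~\ref{prop1}. The paper packages the final step as ``irreducibles of $Ant^{\circ}(A^{\ast})$ map to irreducibles of $F^{\circ}(A^{\ast})$'' plus Lemma~\ref{lem:free1}, which is just another phrasing of your factor-closedness argument.
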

 
 Note that if $A$ is \emph{well-quasi-ordered} (w.q.o)(that is to say that every 
  final segment of $A$ is finitely generated)  then the  monoids $Ant (A^{\ast})$ and $Ant_{<\omega}(A^{\ast})$ are equal and isomorphic to the monoid $F(A^{\ast})$, thus Theorem  \ref{antichainfree} reduces to  Theorem \ref{prop1}. Indeed, if $A$ is w.q.o. then, according to a famous result of Higman \cite{higman}, $A^{\ast}$ is w.q.o. too,  that is  every final segment $F$ of $A^{*}$ is generated by $Min (F)$ the set of minimal elements of $F$. Since $Min(F)$ is an antichain and in this case a finite one, our claim follows.

 Let $N(A^{\ast})$ be the MacNeille completion of the poset $A^{\ast}$, that we may view as the collection of intersections of principal final segments of $A^{\ast}$. The MacNeille completion of $N(A^{\ast})$ is a submonoid of $F(A^{\ast})$. From Theorem \ref{prop1}, we derive:

\begin{theorem}\label{thm:completionfree} Let $A$ be an ordered alphabet. The monoid $N^{\circ}(A^{\ast}):=  N(A^{\ast})\setminus \{\emptyset\}$ is free.
\end{theorem}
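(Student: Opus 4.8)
The plan is to derive the freeness of $N^{\circ}(A^{\ast})$ from that of $F^{\circ}(A^{\ast})$ (Theorem~\ref{prop1}) by identifying $N^{\circ}(A^{\ast})$ as a submonoid of $F^{\circ}(A^{\ast})$ that is generated by a subset of the free basis of $F^{\circ}(A^{\ast})$; recall that a submonoid of a free monoid which is generated by a subset of the basis is itself free, on that subset. Let $X$ be the set of indecomposable elements of $F^{\circ}(A^{\ast})$, i.e.\ the final segments $F\neq A^{\ast}$ admitting no factorization $F=GH$ with $G,H\neq A^{\ast}$. By Theorem~\ref{prop1}, $F^{\circ}(A^{\ast})=X^{\ast}$ and every $F\in F^{\circ}(A^{\ast})$ has a unique factorization $F=X_{1}\cdots X_{k}$ with all $X_{i}\in X$. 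It thus suffices to establish:
\begin{enumerate}[(i)]
\item $N^{\circ}(A^{\ast})$ is a submonoid of $F^{\circ}(A^{\ast})$;
\item if $F\in N^{\circ}(A^{\ast})$, then every factor $X_{i}$ occurring in the $X$-decomposition of $F$ lies in $N^{\circ}(A^{\ast})$.
\end{enumerate}
Granting (i) and (ii) one gets $N^{\circ}(A^{\ast})=(X\cap N^{\circ}(A^{\ast}))^{\ast}$, which is free.

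Point (i) is immediate: $N(A^{\ast})$ is a submonoid of $F(A^{\ast})$ (as recalled above), $\emptyset$ is its absorbing element, and the concatenation of two non-empty final segments is non-empty; hence $N^{\circ}(A^{\ast})=N(A^{\ast})\setminus\{\emptyset\}$ is a submonoid of $F^{\circ}(A^{\ast})$. Point (ii) follows, by induction on $k$ applied to $F=X_{1}\cdot(X_{2}\cdots X_{k})$, from the following statement, which is the crux of the proof:
\begin{quote}
$(\star)$\quad For non-empty final segments $F,G$ of $A^{\ast}$, if $FG\in N(A^{\ast})$ then $F\in N(A^{\ast})$ and $G\in N(A^{\ast})$.
\end{quote}
(One cannot drop non-emptiness: $\emptyset\cdot G=\emptyset\in N(A^{\ast})$ for every $G$.)

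To prove $(\star)$ I would use the description of $N(A^{\ast})$ as the set of intersections of principal final segments, equivalently: $F$ is a cut iff $F=(L_{F})^{u}$, where $L_{F}=\{\alpha\in A^{\ast}:\alpha\leq\beta \text{ for all }\beta\in F\}$ is the set of lower bounds of $F$ and $(\,\cdot\,)^{u}$ denotes the set of upper bounds. One always has $F\subseteq(L_{F})^{u}$, so, assuming $FG$ is a cut, it is enough to take $\sigma\in(L_{F})^{u}$ together with a word $\tau\in G$ (and symmetrically $\sigma\in(L_{G})^{u}$, $\tau\in F$) and to deduce $\sigma\in F$; the argument should rest on the way a subword, in the sense of the Higman ordering, of a concatenation $\gamma_{1}\gamma_{2}$ splits across $\gamma_{1}$ and $\gamma_{2}$, together with a comparison of $L_{FG}$ with $L_{F}\cdot L_{G}$. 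An alternative and probably more natural route, closer to the proof of Theorem~\ref{prop1}, is to work with the graph $\mathcal{G}_{F}$ attached to the injective envelope $\mathcal{S}_{F}$: its cut vertices are precisely the concatenation points of the $X$-decomposition of $F$, so $(\star)$ amounts to checking that ``being a MacNeille cut'' is a property of $F$ which holds if and only if it holds for each block-factor of $\mathcal{G}_{F}$.

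The main obstacle is exactly $(\star)$, namely the assertion that the property of being a MacNeille cut is inherited by the factors of the canonical decomposition in $F^{\circ}(A^{\ast})$. Everything else---point (i), the induction in (ii), and the passage from $N^{\circ}(A^{\ast})=(X\cap N^{\circ}(A^{\ast}))^{\ast}$ to freeness---is formal.
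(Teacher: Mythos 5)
Your reduction is exactly the paper's: $N^{\circ}(A^{\ast})$ is a submonoid of $F^{\circ}(A^{\ast})$, and once one knows that in any factorization $Z=XY$ with $Z\in N^{\circ}(A^{\ast})$ and $X,Y\in F^{\circ}(A^{\ast})$ both factors lie in $N^{\circ}(A^{\ast})$, the irreducibles of $N^{\circ}(A^{\ast})$ are irreducible in $F^{\circ}(A^{\ast})$ and Lemma~\ref{lem:free1} finishes. But the statement you label $(\star)$ and defer is precisely the entire content of the paper's proof of Theorem~\ref{thm:completionfree}, so as written your argument has a hole exactly where the work is. Moreover the element-wise route you sketch does not obviously close: taking $\sigma\in (L_F)^{u}$ and $\tau\in G$, you can indeed get $\sigma\tau\in FG$ (granting $L_{FG}=L_F\cdot L_G$), hence $\sigma\tau=\phi\gamma$ with $\phi\in F$, $\gamma\in G$; equidivisibility of $A^{\ast}$ then gives either that $\phi$ is a prefix of $\sigma$ --- whence $\sigma\geq\phi$ and $\sigma\in F$ --- or that $\sigma$ is a proper prefix of $\phi$, and in the latter case you learn nothing about $\sigma$. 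A pointwise argument would need an extra device (a minimal choice of $\tau$, say), and it is not clear it can be completed as sketched.

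The paper instead argues at the level of sets and obtains $(\star)$ in two lines from material it has already established. Lemma~\ref{lem1} says the cone operators are multiplicative: $(XY)^{\nabla}=X^{\nabla}Y^{\nabla}$ and, for non-empty $X,Y$, $(XY)^{\Delta}=X^{\Delta}Y^{\Delta}$, hence $(XY)^{\nabla\Delta}=X^{\nabla\Delta}Y^{\nabla\Delta}$; its proof is exactly the ``how a Higman subword splits across a concatenation'' argument you allude to, and this is where all the combinatorics lives (note the non-emptiness hypothesis for $\Delta$, which is where your remark about $\emptyset$ resurfaces). Now if $Z=XY$ with $Z\in N^{\circ}(A^{\ast})$, then $Z=Z^{\nabla\Delta}=X^{\nabla\Delta}Y^{\nabla\Delta}$, and since $X\subseteq X^{\nabla\Delta}$ and $Y\subseteq Y^{\nabla\Delta}$, the minimality/cancellation Lemma~\ref{lem:semicancel} forces $X=X^{\nabla\Delta}$ and $Y=Y^{\nabla\Delta}$, i.e.\ $X,Y\in N^{\circ}(A^{\ast})$. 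That is $(\star)$. To repair your write-up, prove the two identities of Lemma~\ref{lem1} and then invoke Lemma~\ref{lem:semicancel}; do not chase individual words. (Your alternative graph-theoretic route via $\mathcal{G}_{F}$ is not what the paper does here and would still require relating ``being a cut'' to the blocks, which is not addressed by Theorem~\ref{thm:blockdecomposition}.)
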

We recall that a member $F$ of $F(A^{\ast})$ is \emph{irreducible} if it is distinct from $A^{\ast}$ and  is not the concatenation of two members  of $F(A^{\ast})$ distinct of $F$ (note that with this definition, the empty set is irreducible).  The fact that $F^{\circ}(A^{\ast})$ is free amounts to the fact that each  member decomposes in a unique way as a concatenation of  irreducible elements.  We interpret this fact by means of injective envelopes of $2$-element metric spaces. 

We suppose that $A$ equipped with an involution (this is not a restriction: we may choose the identity on $A$ as our involution). The category of metric spaces over $F(A^{\ast})$, with the non-expansive maps as morphisms has enough injectives (meaning that every metric space extends isometrically to an injective one).  The gluing of two injectives by a common vertex  yields an injective (see Theorem \ref{prop: collage}); we will  say that an injective which is not the gluing of two proper injectives is \emph{irreducible}. For every  final segment $F$ of $A^{\ast}$, 
 the $2$-element space metric space $E:= (\left\{
x,y\right\}, d)$ such that $d(x,y)=F$,  has an \emph{injective envelope} $\mathcal S_F$ (a minimal extension to an injective metric space).   To $\mathcal S_F$ corresponds a  transition system  $\mathcal {M}_{F}$ on the alphabet $A$, with transitions $(p, a, q)$ if $a \in d(p,q)$. The automaton  $\mathcal{A}_{F}:=  ({\mathcal M}_{F},\left\{ x\right\}, \left\{
y\right\} )$ with $x$ as initial state and $y$ as final state accepts $F$.  A transition system yields a directed graph  whose arcs are the ordered pairs $(x,y)$ linked by a transition. Since   the transition system $\mathcal {M}_{F}$ is reflexive and involutive and thus  the corresponding  graph $\mathcal {G}_{F}$ is undirected and has  a loop  at every vertex.  For an example, if $F= A^{\ast}$, $\mathcal S_F$ is the one-element metric space and $\mathcal {G}_{F}$ reduces to a loop. If $F= \emptyset$, $\mathcal S_F$ is the two-elements  metric space $E:= (\{x, y\}, d)$ with $d(x,y)= \emptyset$ and $\mathcal {G}_{F}$ has no edge.

With the notion of cut vertex and block borrowed from graph theory, we prove:

\begin{theorem}\label{thm:blockdecomposition}
Let $F$ be a  final segment of $A^{\ast}$ distinct of $A^{\ast}$. Then $F$ is irreducible if and only if  $\mathcal S_F$ is irreducible if and only if $\mathcal {G}_F$ has no cut vertex. If $F$ is not irreducible, 
the blocks of $\mathcal {G}_F$ are  the vertices of a finite path $C_0, \dots, C_{n-1}$ with $n\geq 2$,  whose  end vertices  $C_0$ and $C_{n-1}$ contain respectively  the initial state $x$ and the final state $y$ of the automaton $\mathcal{A}_{F}$ accepting $F$. Furthermore, $F=F_0 \dots  F_i \dots  F_{n-1}$, the automaton $\mathcal{A}_{F_i}$ accepting $F_i$  being isomorphic to  $({\mathcal M}_{F}\restriction C_{i},\left\{ x_i\right\}, \left\{
y_i\right\} )$, where $x_i:=x$ if $i=0$, $y_{i}= y$ if $i=n-1$ and  $\{x_i\}=C_{i-1} \cap C_{i}$, $\{y_i\}= C_{i} \cap C_{i+1}$, otherwise. 
\end{theorem}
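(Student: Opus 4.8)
The plan is to read everything off the structure of the graph $\mathcal G_F$ attached to the injective envelope $\mathcal S_F$, with the minimality of the envelope as the main lever. First I would dispose of the trivial cases: $F=A^{\ast}$ is excluded, and if $F=\emptyset$ then $\mathcal S_F$ is the two--point discrete space, $\mathcal G_F$ has no edge and hence no cut vertex, while $\emptyset$ is irreducible by definition; so from now on $F\in F^{\circ}(A^{\ast})\setminus\{A^{\ast}\}$. I would then isolate three basic facts about $\mathcal S_F$ that I expect to be available from the preceding sections or immediate from them: $\mathcal S_F$ is \emph{reduced} (distinct points lie at distance $\neq A^{\ast}$); $\mathcal G_F$ is \emph{connected}, because $d(x,y)=F\neq\emptyset$; and, crucially, for all $u,v\in\mathcal S_F$ the distance $d(u,v)$ equals the language accepted by $(\mathcal M_F,\{u\},\{v\})$ -- the inclusion ``accepted $\subseteq$ distance'' being the iterated triangle inequality plus monotonicity of concatenation, and the reverse one following by embedding the injective envelope of the two--point space $(\{u,v\},d(u,v))$ isometrically into $\mathcal S_F$ and invoking the already stated fact that $\mathcal A_{d(u,v)}$ accepts $d(u,v)$. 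I would also record that a reflexive transition system accepts a final segment for the Higman ordering (inserting a letter at a state is absorbed by the loop there), so all the ``languages'' below are final segments.

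The first real step is a factorization lemma: if $c$ is a cut vertex of $\mathcal G_F$ separating $u$ from $p$, then $d(u,p)=d(u,c)\,d(c,p)$. Here $\supseteq$ is the triangle inequality, and for $\subseteq$ any word in $d(u,p)$ labels a $u$--$p$ path, which must pass through $c$; cutting the path there factors the word through $c$. Since $\mathcal S_F$ is reduced and connected, $d(u,c)$ and $d(c,p)$ are non--empty final segments $\neq A^{\ast}$, so by the freeness (hence cancellativity) of $F^{\circ}(A^{\ast})$ established in Theorem \ref{prop1}, neither factor equals $d(u,p)$. Applied to $u=x$, $p=y$ this already shows that a cut vertex of $\mathcal G_F$ separating $x$ and $y$ forces $F$ to be reducible.

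The heart of the proof — and the step I expect to be the main obstacle — is to show that the block--cut tree of $\mathcal G_F$ is a \emph{path}, and this is exactly where minimality of the injective envelope enters. I would argue: for any cut vertex $c$ and any component $W$ of $\mathcal G_F-c$ missing both $x$ and $y$, the map $r\colon\mathcal S_F\to\mathcal S_F\setminus W$ which is the identity off $W$ and collapses $W$ to $c$ is non--expansive — the only case to check uses the factorization $d(u,p)\subseteq d(u,c)\,d(c,p)\subseteq d(c,p)$ for $u\in W$, $p\notin W\cup\{c\}$, together with the fact that a final segment absorbs concatenation on the left. Then $\mathcal S_F\setminus W$ is a retract of the injective $\mathcal S_F$, hence injective, it contains $\{x,y\}$, and it is proper, contradicting minimality. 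So no such $W$ exists; translated to the block--cut tree this says that deleting any cut--vertex node leaves only components containing $x$ or $y$, and a short degree count (pigeonhole on the two ``ends'' $x,y$) forces every node to have degree at most two — hence the tree is a path, its endpoints are blocks $C_0\ni x$ and $C_{n-1}\ni y$, and $x,y$ are not cut vertices. If $n=1$ there is no cut vertex at all; if $n\geq 2$ the cut vertices $c_i=C_{i-1}\cap C_i$ occur in order.

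Finally I would assemble the statement. Iterating the factorization lemma along $x,c_1,\dots,c_{n-1},y$ gives $F=F_0\cdots F_{n-1}$ with $F_i=d(x_i,y_i)$ as in the statement, each a non--empty final segment $\neq A^{\ast}$. A collapsing retraction as above (collapsing the two sides of $\mathcal G_F-\{c_i,c_{i+1}\}$ to $c_i$ and $c_{i+1}$) shows that each $\mathcal S_F\restriction C_i$ is injective; it contains $\{x_i,y_i\}$ at distance $F_i$ and is minimal with this property, since a proper injective subspace of it, glued back in place of $C_i$ along $c_i,c_{i+1}$ via Theorem \ref{prop: collage}, would yield a proper injective subspace of $\mathcal S_F$ containing $\{x,y\}$. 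Hence $\mathcal S_F\restriction C_i\cong\mathcal S_{F_i}$, $\mathcal M_F\restriction C_i\cong\mathcal M_{F_i}$, and $\mathcal G_{F_i}=C_i$, a single block. For the equivalences: $\mathcal G_F$ has a cut vertex $\iff n\geq 2$ (by the path structure) $\implies F$ reducible (second step); conversely if $F=GH$ non--trivially then $G,H\in F^{\circ}(A^{\ast})\setminus\{A^{\ast}\}$ and $\mathcal S_G\sqcup_v\mathcal S_H$ — injective by Theorem \ref{prop: collage} — is an injective extension of the two--point space at distance $GH=F$ into which $\mathcal S_F$ embeds isometrically; any edge across $v$ in its graph would put a single letter into a product $d(\cdot,v)d(v,\cdot)$ and hence $\varepsilon$ in one factor, forcing a point to equal $v$, impossible, so $v$ is a cut vertex separating the marked points, and since any path between them in $\mathcal G_F$ is also such a path in the glued graph, $v$ lies in $\mathcal S_F$ and is a cut vertex of $\mathcal G_F$. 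Thus $F$ irreducible $\iff \mathcal G_F$ has no cut vertex $\iff n=1$. The same ``$\varepsilon$ in a factor'' observation, together with the retraction argument (which makes both pieces of a gluing injective), shows that for a reduced injective, being a non--trivial gluing at $v$ is equivalent to $v$ being a cut vertex of the associated graph; hence $\mathcal S_F$ is irreducible $\iff \mathcal G_F$ has no cut vertex, closing the chain of equivalences.
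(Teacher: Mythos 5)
Your argument is correct and follows essentially the same route as the paper: the minimality of the injective envelope (no proper non-expansive retraction fixing $x$ and $y$) is used to collapse any component of $\mathcal G_F-c$ missing both $x$ and $y$ and thereby force the block tree to be a path, while the gluing theorem (Theorem \ref{prop: collage} / Corollary \ref{cor:productinjective}) and the absence of edges crossing a gluing vertex give the converse implications. Your word-splitting factorization lemma at a cut vertex and the explicit minimality argument for $\mathcal S_F\restriction C_i\cong\mathcal S_{F_i}$ are just more detailed renderings of steps the paper handles via Lemma \ref{fact:product}.
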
 

From this result, the freeness of $F^{\circ}(A^{\ast})$ follows.

%
%

An approach of transition systems as metric spaces was developped in \cite{JaMiPo,  KP1, kabil, KP2, pouzet-rosenberg, Sa}. A study of retraction, coretraction and injective objects among transition systems was also developped by Hudry \cite{hudry1, hudry2}.

This paper is organized as follows. The proof of Theorems \ref{prop1} and \ref{antichainfree} is in Section 2. The proof of Theorem \ref{thm:completionfree} is in Section 3.  Properties of
metric spaces over a Heyting algebra and their injective envelopes
are summarized in subsection 4.1. Involutive and reflexive transition systems  are presented in subsection 4.2. The injective envelope of a 
$2$-element metric space over $F(A^{\ast })$ is described in subsection  4.3.  We prove Theorem \ref {thm:blockdecomposition}  in subsection \ref{subsection:graph}. 

Part of these results  have been presented at the International Conference on Discrete Mathematics and Computer Science (DIMACOS'11) organized by A.~Boussa\"{\i}ri, M.~Kabil, and A.~Taik  in Mohammedia (Morocco) May, 5-8, 2011; a part of it was included  into the Th\`ese d'\'Etat defended by the first author \cite{kabil}. This paper benefited from discussions with several colleagues. The second author thanks Maurice Nivat for his support over the years on this theme.  We are particularly grateful for the encouragements of J.Sakarovitch.  

\section{The ordered monoids  $F(A^{\ast})$ and $Ant(A^{\ast})$}
In this section we prove Theorems \ref{prop1} \and \ref{antichainfree}. The proof of Theorem \ref{prop1} relies  on Levi's Lemma and a decomposition property we introduce at this occasion. The proof of Theorem \ref{antichainfree} is a consequence. 
\subsection {Monoids, Ordered monoids, Heyting algebras}\label{subsection:monoid}
Let $V$ be a monoid. We denote $\cdot$ the operation and ${\bf 1}$ its
neutral element.
The monoid $V$ is \textit{cancellative} if it is cancellative on the left and on the right that is if for all $u,v,w\in V$:
\begin{equation}\label{canc1}w\cdot u=w\cdot v\Longrightarrow u=v
\end{equation} and 
\begin{equation}\label{canc2} u\cdot w=v\cdot w\Longrightarrow u =v.
\end{equation}

The monoid  $V$  is \textit{equidivisible} if for all $u_1,u_2, v_1,v_2 \in V$:
 \begin{equation}\label{equ:equid}
u_1\cdot u_2=v_1\cdot v_2
\text{implies either}\;  u_1
=v_1\cdot w, v_2=w\cdot u_2\;  \text{or}\; v_1=u_1\cdot w,u_2=w\cdot v_2
\;  \text{for some}\; 
w\in V.
\end{equation}

We say that $V$ is \emph{graded} if there is a  morphism $\gamma$ of $V$ into the additive monoid of non-negative integers such that $\gamma^{-1}(0)=1.$ Such morphism $f$ is called a \emph{graduation}. 
We will use the following form of Levi's lemma  \cite{levi}  (cf \cite{lothaire} p.13, 1.1.1, Section 1.1, Problems).

\begin{lemma}\label{levi-lemma}
A monoid $V$ is free if and only if $V$ is equidivisible and graded. \end{lemma}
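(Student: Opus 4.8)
The plan is to derive Lemma \ref{levi-lemma} from the definitions just introduced, treating the two implications separately. The implication from freeness to ``equidivisible and graded'' is the classical one: if $V=X^{\ast}$, the length function $\gamma\colon X^{\ast}\to \mathbb{N}$ is a graduation, since lengths add under concatenation and $\gamma^{-1}(0)=\{{\bf 1}\}$; and equidivisibility is the original lemma of Levi on words — given $u_1u_2=v_1v_2$ with, say, $|u_1|\geq |v_1|$, the word $v_1$ is a prefix of $u_1$, so $u_1=v_1w$, and deleting the common prefix $v_1$ yields $v_2=wu_2$; the case $|u_1|\leq |v_1|$ is symmetric. I would dispatch this in a few lines.

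For the converse, assume $V$ is equidivisible with graduation $\gamma$. Call $x\in V\setminus\{{\bf 1}\}$ \emph{indecomposable} if $x=ab$ forces $a={\bf 1}$ or $b={\bf 1}$, and let $X$ be the set of indecomposables. First I would show that every $v\in V$ is a product of indecomposables, by induction on $\gamma(v)$: if $\gamma(v)=0$ then $v={\bf 1}$ is the empty product; if $\gamma(v)>0$ then $v\neq {\bf 1}$, and either $v\in X$, or $v=ab$ with $a,b\neq {\bf 1}$, in which case $\gamma(a),\gamma(b)\geq 1$ (as $\gamma^{-1}(0)=\{{\bf 1}\}$) while $\gamma(a)+\gamma(b)=\gamma(v)$, so both are $<\gamma(v)$ and the induction hypothesis applies to each factor.

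The core of the argument is uniqueness. Suppose $x_1\cdots x_m=y_1\cdots y_n$ with all $x_i,y_j$ indecomposable; I would argue by induction on $m$ that $m=n$ and $x_i=y_i$ for all $i$. If $m=0$ the left side is ${\bf 1}$, so $\gamma(y_1)+\cdots+\gamma(y_n)=0$, forcing every $y_j={\bf 1}$, impossible unless $n=0$. For the inductive step, apply equidivisibility to $x_1\cdot(x_2\cdots x_m)=y_1\cdot(y_2\cdots y_n)$: in either alternative, one of $x_1,y_1$ equals the other times some $w\in V$, and since that factor is indecomposable while neither $x_1$ nor $y_1$ equals ${\bf 1}$, the overlap term $w$ must be ${\bf 1}$; hence $x_1=y_1$ and, reading off the second equation furnished by equidivisibility, $x_2\cdots x_m=y_2\cdots y_n$, to which the induction hypothesis applies. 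The delicate point — and the only place where the two hypotheses genuinely interact — is precisely this: equidivisibility together with indecomposability of the leading factors forces the overlap $w$ to vanish, so that \emph{no appeal to cancellativity is required}. Once it is established that every element of $V$ factors uniquely into indecomposables, the evaluation morphism $X^{\ast}\to V$ is a bijective monoid morphism, whence $V\cong X^{\ast}$ is free.
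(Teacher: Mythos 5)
Your proof is correct. The paper does not actually prove this lemma --- it is quoted from Levi and Lothaire as a known result --- so there is no internal argument to compare against; what you have supplied is a sound, self-contained proof of the cited statement. The forward direction is the routine one, and the heart of your converse is exactly right: existence of a factorization into indecomposables follows by induction on the grade (this parallels the paper's own Fact~\ref{fact2}), and for uniqueness the decisive observation is that when equidivisibility is applied to $x_1\cdot(x_2\cdots x_m)=y_1\cdot(y_2\cdots y_n)$, the overlap $w$ is forced to equal ${\bf 1}$ because the leading factor it decomposes is indecomposable while the other leading factor is not ${\bf 1}$; both conclusions ($x_1=y_1$ and $x_2\cdots x_m=y_2\cdots y_n$) then drop out of the two equations equidivisibility provides, with no appeal to cancellation --- you are right that this is the only place the two hypotheses genuinely interact. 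Two minor points of hygiene: in the inductive step you should first dispose of the case $m\geq 1$, $n=0$ (immediate from the graduation, symmetric to your base case), since applying equidivisibility presupposes a factor $y_1$; and note that your notion of indecomposable ($x=ab$ forces $a={\bf 1}$ or $b={\bf 1}$) is not literally the paper's notion of irreducible in (\ref{equ:irr2}) ($x=yz$ forces $x=y$ or $x=z$) --- the two coincide once cancellativity is available, but you use yours consistently throughout, so nothing breaks.
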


An element  $x$ of a monoid $V$ is \textit{irreducible} if  
\begin{equation}\label{equ:irr2}
 x\neq {\bf 1}\ \mbox{and}\ x = y \cdot z\Longrightarrow x=y\ \mbox{or}\ x=z. 
\end{equation}
We recall that
\begin{fact}\label{fact2}Every element of a monoid $V$ has a decomposition into irreducible elements provided that $V$  is graded. 
\end{fact}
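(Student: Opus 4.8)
The plan is to argue by strong induction on the value of a graduation. Fix a graduation $\gamma\colon V\to\mathbb{N}$ (the additive monoid of non-negative integers) with $\gamma^{-1}(0)=\{{\bf 1}\}$, let $x\in V$, and induct on $n:=\gamma(x)$. For the base case, if $\gamma(x)=0$ then $x={\bf 1}$ by the defining property of a graduation, and ${\bf 1}$ is the empty product of irreducibles, so there is nothing to prove. (If one prefers not to admit the empty product, one checks directly that every element of grade $1$ is irreducible: from $x=y\cdot z$ with $\gamma(x)=1$ one gets $\gamma(y)+\gamma(z)=1$, hence $\gamma(y)=0$ or $\gamma(z)=0$, i.e.\ $y={\bf 1}$ or $z={\bf 1}$, so $x=z$ or $x=y$; and $x\neq{\bf 1}$ since $\gamma(x)\neq 0$.)

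For the inductive step, assume $n:=\gamma(x)\geq 1$ and that every element of $V$ of grade strictly less than $n$ is a product of irreducibles. If $x$ is itself irreducible we are done. Otherwise, since $x\neq{\bf 1}$ (because $\gamma(x)\neq 0$), the failure of \eqref{equ:irr2} gives a factorization $x=y\cdot z$ with $x\neq y$ and $x\neq z$. As $\gamma$ is a morphism, $\gamma(y)+\gamma(z)=n$. The key point is that then $0<\gamma(y)<n$ and $0<\gamma(z)<n$: indeed $\gamma(y)=0$ would force $y={\bf 1}$ and hence $x=z$, a contradiction, while $\gamma(y)=n$ would force $\gamma(z)=0$, hence $z={\bf 1}$ and $x=y$, again a contradiction; the symmetric argument bounds $\gamma(z)$. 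By the induction hypothesis, $y$ and $z$ are each products of irreducibles, and concatenating the two decompositions exhibits $x=y\cdot z$ as a product of irreducibles. This closes the induction.

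I do not expect any serious obstacle here; the argument is a routine well-foundedness (descent) argument, and the only point needing a little care is the bookkeeping around the neutral element. The hypothesis $\gamma^{-1}(0)=\{{\bf 1}\}$ is precisely what rules out the degenerate ``factor off a trivial piece'' case and guarantees that the grade strictly decreases when a reducible element is split; without it the recursion need not terminate. I would also stress that the statement asserts only the \emph{existence} of such a decomposition, not its uniqueness — uniqueness is a genuinely stronger property, and it is there that equidivisibility, through Levi's Lemma~\ref{levi-lemma}, will be brought in.
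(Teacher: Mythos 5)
Your proof is correct and is essentially the paper's argument: the paper phrases it as a minimal-counterexample (Noetherian-style) contradiction on the graduation, while you run the equivalent strong induction on $\gamma(x)$, with the same key step that a reducible $x\neq{\bf 1}$ factors as $y\cdot z$ with $0<\gamma(y),\gamma(z)<\gamma(x)$. Nothing further is needed.
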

The proof of this fact follows the lines  of E.Nether's proof that ideals of an Noetherian ring decompose into irreducible ideals. If this fact was not true, the subset $B$ of $x\in V$ with no decomposition into irreducible elements will be non empty. Pick $x\in B$ such that $\gamma(x)$ is minimal w.r.t the graduation $\gamma$. Check that  $x$ is irreducible. This yields a contradiction. 

 We denote by $Irr(V)$ the set of irreducible members of a monoid $V$. 
 
 \begin{lemma}\label{lem:free1} The  submonoid $W$ generated by some set $I$ of irreducible members  of a free monoid $V$ is free.   \end{lemma}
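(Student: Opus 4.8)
The plan is to identify the set $Irr(V)$ of irreducible elements of the free monoid $V$ with a free generating set (basis) of $V$, and then to reduce the statement to the elementary fact that the submonoid generated by a subset of a basis is again free. First I would fix a free generating set $B$ of $V$, so that every element of $V$ is, in a unique way, a product $b_1\cdots b_n$ with $b_i\in B$ and $n\geq 0$, and introduce the length map $\ell\colon V\to\mathbb{N}$ defined by $\ell(b_1\cdots b_n)=n$. This map is well defined precisely because the factorization over $B$ is unique; it is a graduation in the sense of Subsection \ref{subsection:monoid} (we have $\ell^{-1}(0)=\{{\bf 1}\}$), and $\ell(x)\geq 1$ whenever $x\neq{\bf 1}$.

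Next I would establish the key point: $Irr(V)=B$. The inclusion $B\subseteq Irr(V)$ is easy — if $b\in B$ and $b=y\cdot z$, concatenating the $B$-factorizations of $y$ and $z$ produces a $B$-factorization of $b$ of length $\ell(y)+\ell(z)$, which must equal $1$, so $\ell(y)=0$ or $\ell(z)=0$, i.e. $b=z$ or $b=y$. For the reverse inclusion $Irr(V)\subseteq B$, take $x\in Irr(V)$ and write its $B$-factorization $x=b_1\cdots b_n$; since $x\neq{\bf 1}$ we have $n\geq 1$, and if $n\geq 2$ then $x=b_1\cdot(b_2\cdots b_n)$ with $\ell(b_1)=1<\ell(x)$ and $\ell(b_2\cdots b_n)=n-1<\ell(x)$, so neither factor equals $x$, contradicting irreducibility. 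Hence $n=1$ and $x=b_1\in B$.

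Granting this, $I\subseteq Irr(V)=B$, and the conclusion follows quickly: every element of $W$ is a product $i_1\cdots i_k$ of elements of $I$, and, since $I\subseteq B$, this product is simultaneously the $B$-factorization of the element; therefore if $i_1\cdots i_k=i'_1\cdots i'_l$ with all $i_j,i'_j\in I$, uniqueness of the $B$-factorization forces $k=l$ and $i_j=i'_j$ for every $j$. Thus $W$ is free with $I$ as a free generating set. Alternatively, one can phrase the last step through Lemma \ref{levi-lemma}: $W$ is graded by the restriction of $\ell$, and it is equidivisible because, given $u_1u_2=v_1v_2$ in $W$, equidivisibility of $V$ yields $w\in V$ with, say, $u_1=v_1w$ and $v_2=wu_2$, and comparing the $B$-factorizations (which are just the $I$-factorizations) of $u_1$ and $v_1$ shows that $w$ is the product of a terminal block of the $I$-letters of $u_1$, hence $w\in W$.

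The only nonroutine ingredient is the inclusion $Irr(V)\subseteq B$ — that an irreducible element of a free monoid is necessarily one of its generators; once that is in hand, the rest is bookkeeping with uniqueness of factorization, and no hypothesis on the existence of an involution or on $A$ being well-quasi-ordered is needed.
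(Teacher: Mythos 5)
Your proposal is correct and follows essentially the same route as the paper, whose entire proof is the one-line assertion that each element of $W$ has a unique decomposition as a product of members of $I$; you simply supply the justification (irreducibles of a free monoid coincide with its basis, so $I$-factorizations are $B$-factorizations and inherit uniqueness). The identification $Irr(V)=B$ and the final bookkeeping are all sound.
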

 
 Indeed, each element of $W$ has a unique decomposition as a product of members of $I$. 
 
An \emph{ordered monoid} is a monoid equipped with a compatible ordering. The ordered monoid is a \emph{meet-semilattice monoid} if the ordering is a meet-semilattice, that is every pair of elements $u, v\in V$ has a meet, denoted by $u\wedge v$, and if the monoid operation distributes with the meet, that is: 

\begin{equation}\label{equation:distrib1}
(w\cdot u) \wedge (w \cdot v)= w\cdot (u\wedge v)
\end{equation} 
\noindent and 
\begin{equation}\label{equation:distrib2}
(u\cdot w) \wedge (v  \cdot w)=(u\wedge v)\cdot w.
\end{equation}

The free monoid $A^{*}$ with the Higman ordering satisfies the following two conditions:
 \begin{equation}\label{eq:wlequv}
w\leq u\cdot v\;   \text{implies}\;  w=u'\cdot v'\;  \text{for some}\;  u'\leq u  \; \text{and}\; v'\leq v 
 \end{equation}
 and 
 \begin{equation}\label {eq:uvleqw}
  u\cdot v\leq w \; \text{implies}\;   w=u'\cdot v'\;  \text{for some}\; u\leq u'  \text{and}\; v\leq v'.
\end{equation}
 for all $u,v, w\in A^{\ast}$. 

\begin{lemma}\label{lem:subcancel} Let $V$ be an ordered monoid and $u,v, u', v'\in V$. 
\begin{enumerate}[{(a)}]

\item If $V$ is  cancellative  then $u\cdot v=u'\cdot
 v'$, $u\leq u'$ and $v\leq v'$ imply $u=u'$ and $v=v'$.
 
 \item If the neutral element ${\bf 1}$ is the least element of $V$, if $V$ is equidivisible    and satisfies  Condition (\ref{eq:wlequv}) or Condition (\ref {eq:uvleqw})  then $u\cdot v\leq u'\cdot
 v'$, resp. $u\cdot v<u'\cdot v'$,  implies $u\leq u'$, resp. $u<u'$, or  $v\leq v'$, resp. $v<v'$. 
 
 \end{enumerate} \end{lemma}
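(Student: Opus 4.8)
My plan is to treat parts (a) and (b) separately; (a) is a one-line cancellation argument, and (b) is where the work lies, so I would first settle its non-strict half and then upgrade to the strict one.

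For (a): from $u\leq u'$ and $v\leq v'$, compatibility of the ordering with $\cdot$ gives the chain $u\cdot v\leq u'\cdot v\leq u'\cdot v'$ (right-multiply the first inequality by $v$, left-multiply the second by $u'$). Since $u\cdot v=u'\cdot v'$ by hypothesis, all three terms coincide; in particular $u\cdot v=u'\cdot v$, so right cancellation~(\ref{canc2}) gives $u=u'$, and then $u\cdot v=u'\cdot v'=u\cdot v'$ forces $v=v'$ by left cancellation~(\ref{canc1}).

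For the non-strict part of (b) I would assume $V$ satisfies Condition~(\ref{eq:wlequv}), the case of Condition~(\ref{eq:uvleqw}) being symmetric (apply it to re-factor $u'\cdot v'$ instead of $u\cdot v$). Given $u\cdot v\leq u'\cdot v'$, Condition~(\ref{eq:wlequv}) furnishes a factorization $u\cdot v=u''\cdot v''$ with $u''\leq u'$ and $v''\leq v'$. Applying equidivisibility~(\ref{equ:equid}) to $u\cdot v=u''\cdot v''$ produces $w\in V$ with either $u=u''\cdot w$ and $v''=w\cdot v$, or $u''=u\cdot w$ and $v=w\cdot v''$. In both cases ${\bf 1}\leq w$ because ${\bf 1}$ is the least element, so compatibility gives $v\leq w\cdot v=v''\leq v'$ in the first case and $u\leq u\cdot w=u''\leq u'$ in the second; hence $u\leq u'$ or $v\leq v'$.

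For the strict statement I would re-run this argument with attention to where strictness occurs. If $u\cdot v<u'\cdot v'$, then in $u''\cdot v''=u\cdot v\leq u'\cdot v''\leq u'\cdot v'$ we cannot have both $u''=u'$ and $v''=v'$, so $u''<u'$ or $v''<v'$; crossing these two possibilities with the two equidivisibility cases resolves every configuration except the two mirror-image borderline ones, in which one is left knowing, say, that $v=v'$, $u=u''\cdot w$ and $u''<u'$, and hence (since $v''=w\cdot v=v'=v$) that $w\cdot v=v$. This borderline step is the one I expect to be the real obstacle: to get $u<u'$ one must rule out that $w$ is a non-trivial factor absorbed by $v$ (dually, by $u$). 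When $V$ is cancellative this is immediate, and that is exactly the setting of the later application (the free monoid $A^{*}$); for a general equidivisible monoid with least element ${\bf 1}$ one would try to squeeze out such absorbing factors using the minimality of ${\bf 1}$ together with Condition~(\ref{eq:wlequv}) (or~(\ref{eq:uvleqw})), and I would isolate this as a short auxiliary observation before closing the two borderline cases.
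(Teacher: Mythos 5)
Your treatment of (a) and of the non-strict half of (b) is exactly the paper's: for (a), the chain $u\cdot v\leq u'\cdot v\leq u'\cdot v'$ collapses to equalities and cancellation via (\ref{canc1}) and (\ref{canc2}) finishes; for (b), one re-factors $u\cdot v=u''\cdot v''$ with $u''\leq u'$, $v''\leq v'$ using Condition (\ref{eq:wlequv}), applies equidivisibility (\ref{equ:equid}), and uses that ${\bf 1}$ is the least element to conclude $u\leq u''\leq u'$ or $v\leq v''\leq v'$. Nothing to change there.

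On the strict half of (b) you are in fact more scrupulous than the paper, which dispatches it in one sentence ("If $u\cdot v<u'\cdot v'$ we get either $u<u'$ or $v<v'$") without addressing the borderline configurations you isolate. Your diagnosis is accurate: in the case $u=u''\cdot w$, $v''=w\cdot v$ with $v=v''=v'$ (so $w$ is absorbed by $v$ on the left), one only knows $u''<u'$ and $u=u''\cdot w\geq u''$, which gives no comparison between $u$ and $u'$ unless one can cancel $v$ on the right in $u\cdot v=u''\cdot v$; the other borderline case is the mirror image. Under the stated hypotheses alone (equidivisible, ${\bf 1}$ least, Condition (\ref{eq:wlequv}) or (\ref{eq:uvleqw})) I do not see how to close these cases either, and the paper supplies no argument. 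The gap is harmless for the paper's purposes: the strict assertion is only ever invoked for $V=A^{\ast}$ (in the proof of Equation (\ref{eq:min}) and of Equation (\ref{eq:antichains})), where cancellativity holds and your one-line fix ($u\cdot v=u''\cdot v$ forces $u=u''<u'$) applies. So either add cancellativity as a hypothesis for the strict conclusion, or actually supply the "absorbing factor" observation you allude to; as written, your argument — like the paper's — establishes the strict claim only in the cancellative setting.
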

\begin{proof}$(a)$. Since $u\leq u'$ and $v\leq v'$ we have $u\cdot v\leq u'\cdot v\leq u'\cdot v'$. Since $u\cdot v=u'\cdot v'$ we have $u\cdot v= u'\cdot  v$. Since $V$ is cancellative, this yields $u=u'$. Similarly, we get $v=v'$.

$(b)$.  Suppose that $V$ satisfies Condition (\ref{eq:wlequv}). Suppose $u\cdot v\leq u'\cdot v'$. There are $u''$ and $v''$ such that $u''\leq u'$, $v''\leq v'$ and $u\cdot v= u''\cdot v''$. By equivisibility, either $u''= u\cdot u_1$ for some $u_1$, hence $u \leq u''$ in which case $u\leq u'$ or $v''=  v_1\cdot v$ for some $v_1$ hence  $v \leq v''$ in which case $v\leq v'$. If $u\cdot v< u'\cdot v'$ we get either $u<u'$ or $v<v'$. We get the same conclusion if $V$ satisfies Condition (\ref{eq:uvleqw}).
\end{proof} 
\begin{definitions}Let $V$ be an ordered monoid. The cartesian product $V\times V$ is ordered so that $(u_1,u_2)\leq (v_1,v_2)$ if $u_1\leq v_1$ and $u_2\leq v_2$. Let  $(v_1, v_2)\in V\times V$. This pair is \emph{above}  $v\in V$ if $v_1\cdot v_2 \geq v$. It is \emph {minimal above} $v$ if it is above $v$ and there is  no pair $(u_1,u_2)<(v_1, v_2) $ which is above $v$. It is \emph{minimal} if it is minimal above $v:= v_1\cdot v_2$.  The pair $(v_1,v_2)$ satisfies the \emph{convexity property} if 
for every minimal pair $(u_{1},u_{2})\in V\times V$ above  $v:= v_1\cdot v_2$
either:
\begin{equation}\label{equation:3}
 v_{1}\leq u_{1}\cdot u^{1}_{2}, \; u_{1}\leq v_{1}\ \mbox{ and}\ \ u_{2}=u^{1}_{2}\cdot v_{2} \ \ \
\mbox{for some}  \  u^{1}_{2} \in V
\end{equation}

or:
\begin{equation}\label{equation:4}
v_{2}\leq u^2_{1}\cdot u_{2}, \; u_{2}\leq v_{2}\ \mbox{and}\ u_{1}=v_{1}\cdot u^{2}_{1} \  \mbox{for
some}  \ u^{2}_{1}\in V.
\end{equation}

This pair is \emph{summable}  if it is minimal and satisfies the convexity property. 

The ordered monoid $V$ satisfies the \emph{decomposition property}  if every pair is summable. 
\end{definitions}

\begin{lemma}\label{lem:dec-property}
If a meet-semilattice monoid $V$ satisfies the decomposition property, then it is cancellative and
equidivisible.
\end{lemma}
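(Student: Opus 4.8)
The plan is to prove cancellativity first, and then to extract equidivisibility from it together with the convexity clause packaged in the decomposition property.

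For cancellativity, suppose $w\cdot u = w\cdot v =: z$. Since $V$ satisfies the decomposition property, every pair is summable, hence in particular minimal above its own product, so $(w,u)$ is minimal above $z$. I would then test the pair $(w,\, u\wedge v)$: by the distributivity law (\ref{equation:distrib1}) we get $w\cdot (u\wedge v) = (w\cdot u)\wedge(w\cdot v) = z$, so $(w,\,u\wedge v)$ is above $z$, while $(w,\,u\wedge v)\leq (w,u)$; minimality of $(w,u)$ forces $u\wedge v = u$, i.e.\ $u\leq v$. Running the same argument with the minimal pair $(w,v)$ gives $v\leq u$, whence $u=v$, which is (\ref{canc1}). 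Condition (\ref{canc2}) follows symmetrically, using (\ref{equation:distrib2}) and the pair $(u\wedge v,\, w)$.

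For equidivisibility, suppose $u_1\cdot u_2 = v_1\cdot v_2 =: v$. The pair $(u_1,u_2)$ is minimal above $u_1\cdot u_2 = v$, and $(v_1,v_2)$, being summable, satisfies the convexity property; applying it to the minimal pair $(u_1,u_2)$ gives (\ref{equation:3}) or (\ref{equation:4}). In the first case, $u_2 = u^{1}_{2}\cdot v_2$ for some $u^{1}_{2}\in V$; substituting into $u_1\cdot u_2 = v_1\cdot v_2$ yields $u_1\cdot u^{1}_{2}\cdot v_2 = v_1\cdot v_2$, and right cancellativity gives $v_1 = u_1\cdot u^{1}_{2}$, so $w := u^{1}_{2}$ satisfies $v_1 = u_1\cdot w$ and $u_2 = w\cdot v_2$. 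In the second case, $u_1 = v_1\cdot u^{2}_{1}$ for some $u^{2}_{1}\in V$; the same computation with left cancellativity gives $v_2 = u^{2}_{1}\cdot u_2$, so $w := u^{2}_{1}$ satisfies $u_1 = v_1\cdot w$ and $v_2 = w\cdot u_2$. In either case (\ref{equ:equid}) holds.

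The key organisational point — and essentially the only thing to be careful about — is the order of the two halves: cancellativity must come first, because it is exactly what promotes the inequalities delivered by the convexity property to the equalities demanded by equidivisibility. Cancellativity itself is where the meet-semilattice hypothesis enters, by confronting the minimality of an arbitrary pair with the distributivity of the product over $\wedge$; the remaining steps are routine substitutions and do not use the order beyond what the decomposition property already encodes.
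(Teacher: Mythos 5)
Your proof is correct and follows essentially the same route as the paper's: cancellativity is obtained by confronting the minimality of $(w,u)$ with the identity $w\cdot u=(w\cdot u)\wedge(w\cdot v)=w\cdot(u\wedge v)$, and equidivisibility then comes from applying the convexity property of the summable pair $(v_1,v_2)$ to the minimal pair $(u_1,u_2)$ and upgrading the resulting inequality to an equality by cancellation. No gaps.
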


\begin{proof}
 Suppose that $V$ satisfies the decomposition property. Then according to our definition, each pair $(v_1, v_2)$ is summable hence minimal  above  $v:= v_1\cdot v_2$. This property implies that $V$ is cancellative. Indeed, let $u, v, w\in V$ such that $w\cdot u= w\cdot v$. Due to distributivity, we have $w\cdot u= (w\cdot u)\wedge (w \cdot v)= w\cdot (u\wedge v)$. By minimality of $(w, u)$ above $w\cdot u$ we have $u \wedge v=u$, hence $u\leq v$. The minimality   of $(w, v)$ above $w\cdot v$ yields  similarly $v\leq u$, hence $u=v$, proving that $V$ is  cancellative on the left. The proof that $V$  is cancellative on the right is similar. Hence  $V$ is cancellative.

Let $u_1,u_2$ and $v_1,v_2$ such that $u_1\cdot u_2= v_1\cdot v_2$. Set $v:= v_1\cdot v_2$. Since $u_1\cdot u_2= v_1 \cdot v_2$, $(u_1, u_2)$ is above $v$. Since $(u_1,u_2)$ is summable, it is minimal above $u_1\cdot u_2$, that is minimal above $v$. 
 Since $(v_1,v_2)$ is summable, it satisfies the decomposition property. Hence Condition (\ref{equation:3})   or Condition (\ref{equation:4}) holds. Suppose that Condition (\ref{equation:3}) holds. Let $u^{1}_{2} \in V$ such that  $v_{1}\leq u_{1}\cdot u^{1}_{2}, u_{1}\leq v_{1}\ \mbox{ and}\ \ u_{2}=u^{1}_{2}\cdot v_{2}$. Since $v_1\cdot v_2= u_{1}\cdot (u^{1}_{2}\cdot v_2)= (u_{1}\cdot  u^{1}_{2})\cdot v_2$  and $V$ is cancellative we have $v_1= u_1\cdot u^{1}_{2}$. With $w:=u^{1}_{2}$,  Condition (\ref{equ:equid}) holds. If Condition (\ref{equation:4}) holds, we get the same conclusion. 
 Hence $V$ is equidivisible. 
 \end{proof}
 
 An ordered monoid $V$ is a   \emph{Heyting algebra} if the ordering is complete (every subset has a meet  and a join) and the following distributivity condition holds: 

\begin{equation}\label{distribinfty}
\bigwedge  _{\alpha \in A, \beta \in B} u_\alpha \cdot  v_\beta =
\bigwedge _{\alpha \in A} u_\alpha  \cdot  \bigwedge _{\beta \in B} v_\beta
\end{equation}
for all $u_\alpha \in V$ $(\alpha \in A)$ and
$v_{\beta} \in V$ $(\beta \in B)$. 

A Heyting algebra $V$  is \emph{involutive} if there is an involution $-$ on $V$ which preserves the ordering and reverses the monoid operation (that is $\overline {u\cdot v}= \overline v\cdot \overline u$ for all $u$ and $v$ in $V$) in particular the involution preserves the neutral element of the monoid.

In a Heyting algebra, the least element is not necessarily the neutral element for the monoid operation (in the next section, the set $\powerset (A^{\ast})$ of langages over an alphabet $A$ provides such an example). However, in the Heyting algebras we work with, namely $F(A^{\ast})$ and $Ant(A^{\ast})$, the least element and the neutral element coincide.

\subsection {The monoid of final segments}\label{subsection-final} Let $A$ be a set. Considering $A$ as an \textit{alphabet} whose
members are \textit{letters}, we write a word $\alpha $ with a
mere juxtaposition of its letters as $\alpha
=a_{0}\dots a_{n-1}$ where $a_{i}$
are letters from $A$ for 0 $\leq i\leq n-1.$ The integer $n$ is the \textit{%
length} of the word $\alpha $ and we denote it $\left| \alpha \right| $.
Hence we identify letters with words of length 1. We denote by $\Box $ the
empty word, which is the unique  word of length zero. 
The \emph{concatenation} of two word $\alpha:= a_{0}\cdots a_{n-1}$ and $\beta:=b_{0}\cdots b_{m-1}$ is the word $\alpha \beta:=a_{0}\cdots a_{n-1}b_{0}\cdots b_{m-1}$. We denote by $A^{\ast }$ the set of all words on the alphabet $A$. Once equipped with the
concatenation of words, $A^{\ast }$ is a monoid, whose neutral element is the empty word, in fact $A^{\ast}$ is the \textit{free
monoid} on $A$.   A \emph{language} is any subset $X$ of $A^{\ast}$. We denote by $\powerset (A^{\ast})$ the set  of languages. We will use capital letters for languages. If $X, Y \in \powerset (A^{\ast})$ the \emph{concatenation} of $X$ and $Y$ is the set  $XY:= \{\alpha\beta: \alpha\in X, \beta\in Y\}$ (and we will use  $Xy$ and $xY$instead of $X\{y\}$ and $\{x\}Y$).  This  operation  extends the concatenation operation on $A^{\ast}$; with it, the set $\powerset (A^{\ast})$ is a monoid whose  neutral element is the set $\{ \Box \}$.  Ordered by inclusion, this is  (join) lattice ordered monoid. Indeed, concatenation distributes over arbitrary union, namely:

\begin{center}
$( \underset{i\in I}{\bigcup }X_{i})Y=\underset{i\in I}{ \bigcup }X_{i} Y.$
\end{center}
But concatenation does not distribute over intersection (for a simple example, let $A:= \{a,b,c\}$, $I:=\{1, 2\}$, $X_1:=\{ab\}$, $X_2:=\{a\}$, $Y:=\{c, bc\}$, then $\emptyset =(X_1\cap X_2) Y\not = X_1Y\cap X_2Y=\{abc\}$). 
Hence, ordered by reverse of the inclusion, the monoid $\powerset( A^{\ast
})$ becomes a  Heyting algebra (while ordered by inclusion it is not). If $-$ is an involution on $A$, it extends  to an involution on $A^*$, by setting $\overline \Box:= \Box$, and $\overline{\alpha}=\overline {a_{n-1}}\dots\overline{a_0}$ if $\alpha= a_{0}\dots a_{n-1}$.  This  involution reverses the concatenation of words. Extended  to $\powerset (A^{\ast})$ by setting $\overline X:= \{\overline {\alpha }: \alpha\in X\}$, it reverses the concatenation of languages and preserves the inclusion order on languages.  
In summary:
\begin{lemma}\label{fact1:heyting}  The set $\powerset( A^{\ast
})$ equipped with the concatenation of languages and the reverse of the inclusion order is a Heyting algebra. Moreover, this is an involutive Heyting algebra if we add  to it  the extension of an involution on $A$. 
\end{lemma}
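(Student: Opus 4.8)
The plan is to unpack the definition of (involutive) Heyting algebra from Subsection~\ref{subsection:monoid} and verify its clauses one at a time, using only facts already recorded in the paragraph preceding the statement. Since it has already been observed that $\powerset(A^{\ast})$ is a monoid under concatenation of languages with neutral element $\{\Box\}$, what remains is to check: (i) the reverse-inclusion order is compatible with concatenation, so that $\powerset(A^{\ast})$ is an ordered monoid; (ii) this order is complete; (iii) the distributivity law (\ref{distribinfty}) holds; and, for the last assertion, (iv) the extended involution preserves the order and reverses concatenation.

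For (i) and (ii) I would argue as follows. The reverse-inclusion order is, of course, a partial order, and in the complete Boolean lattice $(\powerset(A^{\ast}),\subseteq)$ every family $(X_i)_{i\in I}$ of languages has both a union and an intersection; passing to the reversed order, the meet of $(X_i)_{i\in I}$ is $\bigwedge_i X_i=\bigcup_i X_i$ and its join is $\bigvee_i X_i=\bigcap_i X_i$, so the order is complete. Compatibility is immediate from the definition of concatenation of languages: if $X\supseteq X'$ then $XY=\{\alpha\beta:\alpha\in X,\ \beta\in Y\}\supseteq\{\alpha\beta:\alpha\in X',\ \beta\in Y\}=X'Y$, and symmetrically $YX\supseteq YX'$, so concatenation is monotone in each argument with respect to reverse inclusion.

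For (iii), since the meet in the reversed order is the union, the law (\ref{distribinfty}) reads $\bigcup_{\alpha\in A,\ \beta\in B} u_\alpha v_\beta=(\bigcup_{\alpha\in A} u_\alpha)(\bigcup_{\beta\in B} v_\beta)$, which is exactly the two-sided form of the distributivity of concatenation over arbitrary unions recalled above: distributing first the outer left factor and then each $u_\alpha$ over $\bigcup_\beta v_\beta$ gives $(\bigcup_\alpha u_\alpha)(\bigcup_\beta v_\beta)=\bigcup_\alpha u_\alpha(\bigcup_\beta v_\beta)=\bigcup_\alpha\bigcup_\beta u_\alpha v_\beta$. Hence $\powerset(A^{\ast})$ is a Heyting algebra.

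For (iv), I would first check on words, by a routine induction on length from the definition $\overline{a_0\cdots a_{n-1}}:=\overline{a_{n-1}}\cdots\overline{a_0}$, that $\overline{\overline\alpha}=\alpha$ and $\overline{\alpha\beta}=\overline\beta\,\overline\alpha$, and then transport these identities to languages via $\overline X:=\{\overline\alpha:\alpha\in X\}$. This gives $\overline{\overline X}=X$ (so $-$ is an involution of $\powerset(A^{\ast})$), $X\subseteq Y\Rightarrow\overline X\subseteq\overline Y$ (so it preserves inclusion, hence reverse inclusion), and $\overline{XY}=\{\overline{\alpha\beta}:\alpha\in X,\ \beta\in Y\}=\{\overline\beta\,\overline\alpha:\alpha\in X,\ \beta\in Y\}=\overline Y\,\overline X$ (so it reverses concatenation and in particular fixes $\{\Box\}$). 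Thus the structure is an involutive Heyting algebra. I do not anticipate any genuine difficulty: the whole argument is bookkeeping of facts already present in the text, and the only two points deserving an explicit line are that reversing the inclusion interchanges union and intersection, and that the anti-automorphism identity $\overline{\alpha\beta}=\overline\beta\,\overline\alpha$ must be established on words before one passes to languages.
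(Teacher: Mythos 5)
Your proof is correct and follows the same route as the paper, which states this lemma as a summary of exactly the facts you verify: the monoid structure with neutral element $\{\Box\}$, the distributivity of concatenation over arbitrary unions (which become meets under reverse inclusion, giving condition (\ref{distribinfty})), and the extension of the involution to words and then to languages. The only thing you add is the explicit bookkeeping the paper leaves implicit, and all of it checks out.
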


%
%
%
We suppose from now that the alphabet $A$ is
ordered. 
We
order  $A^{\ast }$ with the Higman ordering: if $\alpha $ and
$\beta $ are two elements in $A^{\ast }$ such $\alpha: =a_{0}\cdots 
a_{n-1}$ and $\beta: =b_{0}\cdots  b_{m-1}$ then $\alpha \leq \beta$
if there is an injective and increasing map $h$ from $\left\{
0,...,n-1\right\} $ to $\left\{ 0,...,m-1\right\}$ such that for each $i$,  $
0\leq i\leq n-1$, we have $a_{i}\leq b_{h\left( i\right) }$. Then
$A^{\ast }$ is an ordered monoid with respect to the concatenation
of words.  A \emph{final segment} of $A^{\ast}$ is any subset $F\subseteq A^{\ast}$ such that $\alpha \leq
\beta,\alpha \in F$ implies $\beta \in F$.  Initial segments are defined dually.  Let $X$ be a subset of 
$A^{\ast}$; then $$\uparrow X:= \{ \beta  \in A^{\ast}: \alpha \leq \beta\;  \text{for some}\;  \alpha\in X\}$$  is the {\it upper set} generated by $X$ and 
$$\downarrow X := \{\alpha  \in A^{\ast}:  \alpha \leq \beta \;  \text{for some}\;  \beta\in X\}$$   is the {\it lower set} generated by $X$. For a singleton $X = 
\{\alpha\}$, we omits the set brackets and call $\uparrow \alpha $ and $\downarrow 
\alpha $ a  \emph{principal upper set} and a \emph{principal lower set} respectively. 
 Let $F\left( A^{\ast }\right) $ be the collection of
final segments of $\ A^{\ast }$. 
The set $F\left( A^{\ast }\right)$ is stable w.r.t. the concatenation of languages:  if $ X,Y\in F\left(
A^{\ast }\right)$,  then $XY\in F(A^{\ast})$ (indeed, if $u,v, w\in A^{\ast}$ with $uv\leq w$ then $w= u'v'$ with $u\leq u'$ and $v\leq v'$).   Clearly,  the neutral element
 is $A^{\ast }$.  The set $F\left( A^{\ast }\right) $ ordered by
inclusion is a complete lattice (the join is the union, the meet
is the intersection).  Concatenation distributes over union. 
If we order $F\left( A^{\ast }\right) $ by reverse of the
inclusion, denoting $X\leq Y$ instead of $X\supseteq Y$, and we set ${\bf{1}}:= A^{\ast}$, we have 
%

\begin{lemma}\label{fact:heyting}The set $F( A^{\ast
})$ equipped with the concatenation of languages and the reverse of the inclusion order is a Heyting algebra. Moreover, this is an involutive Heyting algebra if we add  to it  the extension of an involution on $A$. 
\end{lemma}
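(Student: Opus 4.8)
The plan is to verify directly that $F(A^{\ast})$ is a Heyting algebra under the reverse inclusion order with the concatenation operation, which amounts to checking four things: that concatenation is an associative operation on $F(A^{\ast})$ with neutral element $A^{\ast}$, that the order is complete, that the infinite distributivity law (\ref{distribinfty}) holds, and that the monoid operation is compatible with the order; then, in the involutive case, that the extended involution preserves reverse inclusion and reverses concatenation.

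First I would recall what has already been established in the running text: $F(A^{\ast})$ is closed under concatenation of languages (this is checked just above the statement, using Condition (\ref{eq:wlequv})), and ordered by inclusion it is a complete lattice in which joins are unions and meets are intersections. Since the reverse inclusion order just swaps meets and joins, completeness is immediate. Associativity of concatenation and the fact that $A^{\ast}$ is neutral are inherited from the monoid $\powerset(A^{\ast})$ of Lemma \ref{fact1:heyting}, since $F(A^{\ast})$ is a submonoid. So the only substantive point is the distributivity law (\ref{distribinfty}). Translating back to inclusion order, a meet $\bigwedge$ in the reverse order is a union $\bigcup$ in the inclusion order, so (\ref{distribinfty}) becomes
\[
\bigcup_{\alpha \in A,\, \beta \in B} X_\alpha Y_\beta \;=\; \Bigl(\bigcup_{\alpha \in A} X_\alpha\Bigr)\Bigl(\bigcup_{\beta \in B} Y_\beta\Bigr),
\]
which is exactly the distributivity of concatenation over arbitrary unions that was already recorded for $\powerset(A^{\ast})$ in Subsection \ref{subsection-final} (and it restricts to $F(A^{\ast})$ since $F(A^{\ast})$ is closed under both operations). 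Order-compatibility of concatenation is likewise just monotonicity of $XY$ in each argument, again inherited from $\powerset(A^{\ast})$ and rephrased for reverse inclusion.

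For the involutive part, I would observe that the extension of the involution on $A$ to $\powerset(A^{\ast})$ already reverses concatenation of languages and preserves inclusion (Lemma \ref{fact1:heyting}); it therefore suffices to note that it maps $F(A^{\ast})$ into itself. This is because the extended involution on $A^{\ast}$ is an order-isomorphism of $(A^{\ast},\leq)$ for the Higman order — it reverses the position of letters but preserves comparabilities letter by letter — so the image of a final segment is again a final segment. Then preserving inclusion means preserving reverse inclusion, and reversing concatenation is already known, so $F(A^{\ast})$ with the involution is an involutive Heyting algebra.

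I do not expect a genuine obstacle here: essentially everything is a restriction or reformulation of the corresponding facts for $\powerset(A^{\ast})$ in Lemma \ref{fact1:heyting}, the only points needing a word of justification being closure of $F(A^{\ast})$ under concatenation and under the involution, both of which follow from the compatibility of the Higman order with concatenation and with letter-reversal. The mild subtlety worth a sentence is the bookkeeping between the inclusion order (where the relevant distributivity is over unions) and the reverse order used in the statement (where the same law reads as distributivity over meets), so that it matches the form (\ref{distribinfty}).
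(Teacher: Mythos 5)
Your proof is correct and follows essentially the same route as the paper, which likewise derives the lemma from closure of $F(A^{\ast})$ under concatenation, completeness of the lattice of final segments, and distributivity of concatenation over arbitrary unions (meets in the reverse order), with the involution part reducing to the fact that the extended involution is an order-automorphism of $A^{\ast}$ for the Higman ordering. One small slip: closure of $F(A^{\ast})$ under concatenation uses Condition (\ref{eq:uvleqw}) (from $uv\leq w$ deduce $w=u'v'$ with $u\leq u'$, $v\leq v'$), not Condition (\ref{eq:wlequv}).
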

A correspondance between $\powerset (A^{\ast})$ and $F(A^{\ast})$ is given in the following lemma.
\begin{lemma}\label{morphism1}The correspondance which associates  to every subset $X$ of $A^{\ast}$ the final segment $\uparrow X$ is a morphism of ordered monoids from $\powerset (A^{\ast})$ onto $F(A^{\ast})$. 
\end{lemma}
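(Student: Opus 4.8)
The plan is to check, in turn, the three things packaged into the statement: that $\uparrow X$ really lies in $F(A^{\ast})$ for every $X\subseteq A^{\ast}$, that $X\mapsto \uparrow X$ is a monoid homomorphism (respects concatenation and the neutral element), and that it is order-preserving and surjective onto $F(A^{\ast})$.

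First I would dispatch the easy points. If $\beta\in\uparrow X$ and $\beta\leq\gamma$, pick $\alpha\in X$ with $\alpha\leq\beta$; transitivity gives $\alpha\leq\gamma$, so $\gamma\in\uparrow X$, i.e.\ $\uparrow X$ is a final segment. The neutral element is preserved because $\uparrow\{\Box\}$ is the set of all $\beta$ with $\Box\leq\beta$, which is all of $A^{\ast}$, the neutral element of $F(A^{\ast})$. For order-preservation, note $X\subseteq Y$ implies $\uparrow X\subseteq\uparrow Y$, so the map respects inclusion and hence also the reverse-inclusion order carried by both monoids. Surjectivity is immediate: any $F\in F(A^{\ast})$ satisfies $\uparrow F=F$ (in fact $F=\uparrow\Min(F)$ when $F$ is generated by its minimal elements, but $\uparrow F=F$ holds in every case since $F$ is a final segment), so $F$ is the image of $F$.

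The substantive step is the identity $\uparrow(XY)=(\uparrow X)(\uparrow Y)$. For $(\uparrow X)(\uparrow Y)\subseteq\uparrow(XY)$: a generic element of the left-hand side is $\alpha'\beta'$ with $\alpha'\geq\alpha\in X$ and $\beta'\geq\beta\in Y$; compatibility of concatenation with the Higman ordering gives $\alpha\beta\leq\alpha'\beta'$, and $\alpha\beta\in XY$, so $\alpha'\beta'\in\uparrow(XY)$. For the reverse inclusion, take $\gamma\in\uparrow(XY)$, so $\gamma\geq\alpha\beta$ for some $\alpha\in X$, $\beta\in Y$; applying Condition~(\ref{eq:uvleqw}) to $\alpha\beta\leq\gamma$ yields a factorization $\gamma=\alpha'\beta'$ with $\alpha\leq\alpha'$ and $\beta\leq\beta'$, whence $\alpha'\in\uparrow X$, $\beta'\in\uparrow Y$, and $\gamma\in(\uparrow X)(\uparrow Y)$. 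Combining the two inclusions finishes the homomorphism property, and together with the observations above this proves the lemma.

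There is no genuine obstacle: the only place a real structural feature of the Higman ordering on $A^{\ast}$ intervenes is the inclusion $\uparrow(XY)\subseteq(\uparrow X)(\uparrow Y)$, which is exactly the interpolation Condition~(\ref{eq:uvleqw}) already recorded for $A^{\ast}$ (and which was used earlier to note that $F(A^{\ast})$ is closed under concatenation).
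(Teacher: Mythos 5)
Your proof is correct and follows essentially the same route as the paper: both reduce the lemma to the identity $\uparrow X \uparrow Y = \uparrow(XY)$ and prove the nontrivial inclusion $\uparrow(XY)\subseteq(\uparrow X)(\uparrow Y)$ via the interpolation property (\ref{eq:uvleqw}) of the Higman ordering. You are somewhat more explicit than the paper about the routine points (well-definedness, neutral element, order preservation, surjectivity), which the paper dispatches with ``clearly'' and ``by definition,'' but the substance is identical.
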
 
\begin{proof} Clearly this correspondence preserves the ordering. Since by definition it is  surjective, to show that it is a morphism  of monoid it suffices to show that

\begin{equation}\label{eq:uparrow}
\uparrow X \uparrow Y= \uparrow (XY)
\end{equation}
for all $X, Y \in \powerset (A^{\ast})$. 

Let $z\in \uparrow X \uparrow Y$. Then  $z$ decomposes as $z= xy$ with  $x\in \uparrow X$ and $y\in \uparrow Y$. There are $x'\in X$ with $x'\leq x$ and $y'\in Y$ with $ 
y'\leq y$. Hence, $x'y'\in XY$ and $x'y'\leq xy=z$. Thus $z\in \uparrow (XY)$. This proves that $\uparrow X \uparrow Y\subseteq  \uparrow (XY)$. 

Conversely, let  $z\in \uparrow  (X Y)$. Then there are $x'\in X$ and $y'\in Y$ such that $x'y'\leq z$. Thus  $z$ decomposes as $z= xy$ with $x'\leq x$ and $y'\leq y$. Hence 
$x\in  \uparrow X$ and $y\in \uparrow Y$. Thus $z=xy\in  \uparrow X \uparrow Y$. This  proves that $\uparrow X Y\subseteq  \uparrow X\uparrow Y$. 
The equality holds, as claimed.
\end{proof}

An \emph{antichain} of  $A^{\ast}$ is any subset $X$ of $A^{\ast}$ such that any two distinct elements $x$ and $y$ of $X$ are incomparable w.r.t. the Higman ordering. Let  $Ant (A^{\ast})$ be the set of antichains of $A^{\ast}$ and $Ant_{<\omega} (A^{\ast})$ be the set of finite antichains. 

\begin{lemma} $Ant (A^{\ast})$ and $Ant_{<\omega} (A^{\ast})$ are submonoids of  $\powerset(A^{\ast})$. The morphism $X\mapsto \uparrow X$ from $\powerset(A^{\ast})$ into $F(A^{\ast})$  induces a one-to-one morphism from $Ant (A^{\ast})$ into $F(A^{\ast})$. The correspondance which associates  to every final segment $X$ of $A^{\ast}$ the set $Min(X)$ of its minimal elements is a morphism of monoids from $F(A^{\ast})$ onto $Ant(A^{\ast})$. 
\end{lemma}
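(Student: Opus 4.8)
The plan is to deduce all three assertions from one combinatorial fact about the Higman ordering, the splitting property~(\ref{eq:wlequv}), together with the cancellativity of the free monoid $A^{\ast}$ and the fact that $\Box$ is the least element of $A^{\ast}$. The technical core, invoked twice below, is the following. Suppose $\alpha_{1}\beta_{1}\leq\alpha_{2}\beta_{2}$ in $A^{\ast}$; by~(\ref{eq:wlequv}) there is a factorization $\alpha_{1}\beta_{1}=u'v'$ with $u'\leq\alpha_{2}$ and $v'\leq\beta_{2}$. If $|u'|\geq|\alpha_{1}|$, write $u'=\alpha_{1}w$, whence $\beta_{1}=wv'$; then $\alpha_{1}\leq u'\leq\alpha_{2}$, so if $\alpha_{1}=\alpha_{2}$ is thereby forced, antisymmetry gives $u'=\alpha_{1}$, cancellativity gives $w=\Box$, and $\beta_{1}=v'\leq\beta_{2}$, so $\beta_{1}=\beta_{2}$ is forced as well. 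If $|u'|<|\alpha_{1}|$, write $\alpha_{1}=u'u''$ with $u''\neq\Box$, whence $v'=u''\beta_{1}$; then $\beta_{1}\leq v'\leq\beta_{2}$, and once $\beta_{1}=\beta_{2}$ is forced, antisymmetry gives $v'=\beta_{1}$, hence $u''=\Box$, a contradiction --- so this second case does not occur.

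First I would show that $Ant(A^{\ast})$ and $Ant_{<\omega}(A^{\ast})$ are submonoids of $\powerset(A^{\ast})$: the neutral element $\{\Box\}$ is a finite antichain, so it suffices to check that the concatenation $XY$ of two antichains is an antichain (finiteness being obvious), and this is precisely the technical core applied with $\alpha_{1},\alpha_{2}\in X$ and $\beta_{1},\beta_{2}\in Y$, the antichain conditions supplying the needed implications ``$\alpha_{1}\leq\alpha_{2}\Rightarrow\alpha_{1}=\alpha_{2}$'' and ``$\beta_{1}\leq\beta_{2}\Rightarrow\beta_{1}=\beta_{2}$''. Next, for the map $X\mapsto\uparrow X$: by Lemma~\ref{morphism1} it is a morphism of ordered monoids from $\powerset(A^{\ast})$ onto $F(A^{\ast})$, so by the previous point it restricts to a morphism $Ant(A^{\ast})\to F(A^{\ast})$, and injectivity follows from the general identity $Min(\uparrow Z)=Min(Z)$ --- valid for every $Z\subseteq A^{\ast}$ with no hypothesis on $A$, since an element is minimal in $\uparrow Z$ if and only if it lies in $Z$ and is minimal there --- because every element of an antichain is minimal in it, so $Min(\uparrow X)=X$ for $X\in Ant(A^{\ast})$. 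The same identity shows in addition that $F\mapsto Min(F)$ maps onto $Ant(A^{\ast})$.

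It then remains to verify that $F\mapsto Min(F)$ is a monoid morphism $F(A^{\ast})\to Ant(A^{\ast})$. It lands in $Ant(A^{\ast})$ since minimal elements of a poset are pairwise incomparable; it sends $A^{\ast}$ to $\{\Box\}$ since $\Box$ is least in $A^{\ast}$; and surjectivity was just obtained. The substantive point is $Min(FG)=Min(F)\,Min(G)$. The inclusion $\subseteq$ is routine: if $\gamma\in Min(FG)$ is written as $\alpha\beta$ with $\alpha\in F$ and $\beta\in G$, then any $\alpha'\in F$ with $\alpha'<\alpha$ would yield $\alpha'\beta<\gamma$ in $FG$, contradicting minimality of $\gamma$; hence $\alpha\in Min(F)$, and symmetrically $\beta\in Min(G)$. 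For $\supseteq$, let $\gamma=\alpha\beta$ with $\alpha\in Min(F)$ and $\beta\in Min(G)$; then $\gamma\in FG$, and if some $\gamma'=\alpha'\beta'<\gamma$ lay in $FG$ (with $\alpha'\in F$, $\beta'\in G$), I would apply~(\ref{eq:wlequv}) to $\alpha'\beta'\leq\alpha\cdot\beta$ and run the technical core once more --- this time with ``$\alpha\in Min(F)$'' supplying ``$\alpha'\leq\alpha\Rightarrow\alpha'=\alpha$'' and ``$\beta\in Min(G)$'' supplying ``$\beta'\leq\beta\Rightarrow\beta'=\beta$'' --- forcing $\gamma'=\gamma$, a contradiction.

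The subtlety to keep in view, and the reason for phrasing the minimality arguments exactly as above, is that $A$, hence $(A^{\ast},\leq)$, need not be well-founded: a final segment $F$ need not equal $\uparrow Min(F)$, and $Min(F)$ may even be empty. Each argument is therefore arranged to use only the trivial implication ``$\alpha'\leq\alpha$ with $\alpha'\in F$ and $\alpha\in Min(F)$ imply $\alpha'=\alpha$'', never the generally false claim that every element of $F$ dominates a minimal element. Modulo that care, the only genuine work is the splitting argument built on~(\ref{eq:wlequv}), which is where I expect the sole real difficulty to reside.
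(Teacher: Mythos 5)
Your proof is correct and follows essentially the same route as the paper's: the splitting step you call the ``technical core'' is precisely the content of part (b) of Lemma \ref{lem:subcancel} (equidivisibility of $A^{\ast}$ together with Condition (\ref{eq:wlequv})), which the paper invokes at exactly the corresponding places, and your verification of $Min(FG)=Min(F)Min(G)$ matches the paper's. The only cosmetic differences are that you prove closure of antichains under concatenation directly from that core, whereas the paper deduces it from the identity $Min(XY)=Min(X)Min(Y)$ applied to $\uparrow U\uparrow V$; your explicit caution about non-well-foundedness is sound and is handled implicitly in the paper's argument.
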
 
\begin{proof} We prove first  that: 

\begin{equation}\label{eq:min}
Min (X  Y)= Min (X) Min (Y)
\end{equation}
for all $X, Y \in F(A^{\ast})$. 

Let $z\in Min (XY)$. Since $z\in XY$,   it decomposes as $z=xy$ with $x\in X$ and $y\in Y$.  We prove that $x\in Min (X)$ and $y\in Min (Y)$, from which follows that $z=xy\in Min (X) Min (Y)$ and thus the inclusion $Min (X  Y)\subseteq Min (X) Min (Y)$. If $x\not \in Min (X)$ there is some $x' \in X$ with $x'<x$. In this case, $x'y<xy=z$. Since $x'y\in XY$ this contradicts the minimality of $z$. Thus $x\in Min (X)$. By the same argument, we have $y\in Min (Y)$. 

Let $z\in Min (X)Min (Y)$. We claim that $z\in Min(XY)$ from which the inclusion $Min (X)Min (Y)\subseteq Min(XY)$ follows. This element  $z$ decomposes as $z=xy$ with $x\in Min (X)$ and $y\in Min (Y)$. In particular, $z\in XY$. If $z\not \in Min (XY)$, there is some $z'\in XY$ with $z'< z$. This element $z'$ decomposes as $z'= x'y'$ with $x'\in X$, $y'\in Y$. Since $x'y'<xy$ then  according to $(b)$ of Lemma \ref{lem:subcancel} either $x'<x$ or $y'<y$. The first case is impossible since $x\in Min(X)$ and the second too since $y\in Min (Y)$. Thus $z\in Min(XY)$ as claimed. 

Let $U, V\in Ant(A^{\ast})$. Then $UV\in Ant(A^{\ast})$ and 

\begin{equation}\label{eq:antichains}
\vert UV\vert= \vert U \vert \vert V\vert. 
\end{equation}

 We have $U= Min(\uparrow U)$ and $V= Min (\uparrow V)$ and by Equation  (\ref{eq:min}) we have $UV= Min(\uparrow U\uparrow V)$, proving that $UV$ is an antichain. The map $(x,y) \mapsto xy$ is a bijection from $U\times V$ to $UV$. Indeed, if $xy=x'y'$ then by $(b)$ of Lemma \ref{lem:subcancel} either $x\leq x'$ or $y\leq y'$. If $x\leq x'$ then since $U$ is an antichain then $x=x'$; since $A^{\ast}$ is free it is cancellative,  hence $y=y'$. Similarly, the case $y'\leq y$ yields  $y=y'$ and $x=x'$.  Equation (\ref{eq:antichains}) follows.

 Concatenation preserves  $Ant(A^{\ast})$ and, according  to Equation (\ref{eq:antichains}), also $Ant_{<\omega}(A^{\ast})$. Since  for each $x\in A^{\ast}$, $\{x\}$ is an antichain, $\{\Box\}$ is an antichain. Since this is  the neutral element of $\powerset (A^{\ast})$,  this is the neutral element of $Ant(A^{\ast})$ and $Ant_{<\omega}(A^{\ast})$ which are then  submonoids of $\powerset (A^{\ast})$.

Since $Ant(A^{\ast})$ is a  submonoid of $\powerset (A^{\ast})$, the map $X\mapsto \uparrow X$ from $\powerset(A^{\ast})$ into $F(A^{\ast})$ induces  a morphism from $Ant (A^{\ast})$ into $F(A^{\ast})$. This morphism is one-to-one. Indeed, if $U$ is an antichain, $U= Min (\uparrow U)$. 
The map $X\mapsto Min (X)$ transforms the neutral element of the monoid $F( A^{\ast})$, namely $A^{\ast}$,   into $\{\Box\}$ which is the neutral element of the monoid $Ant(A^{\ast})$. Since, according to  Equation (\ref{eq:min}), this maps  preserves the concatenation, it is a morphism of monoid. 
\end{proof}

\begin{lemma} \label{fact:graded} The set $F^{\circ} (A^{\ast}):= F(A^{\ast})\setminus \{\emptyset \}$ is a  graded and cancellative submonoid  of $F\left( A^{\ast}\right)$.  
\end{lemma}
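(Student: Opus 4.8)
```latex
The plan is to verify the two assertions of Lemma~\ref{fact:graded} --- that $F^{\circ}(A^{\ast})$ is a submonoid which is \emph{graded} and \emph{cancellative} --- essentially by hand, the only genuine ingredient being that the concatenation of two non-empty final segments is again non-empty and that $A^{\ast}$ itself is cancellative with a well-behaved length function. First I would check that $F^{\circ}(A^{\ast})$ is a submonoid of $F(A^{\ast})$: the neutral element $A^{\ast}$ is non-empty (here I use tacitly that $A$, hence $A^{\ast}$, is non-empty, the word $\Box$ witnessing this), and if $X,Y\in F^{\circ}(A^{\ast})$ then picking $\alpha\in X$ and $\beta\in Y$ gives $\alpha\beta\in XY$, so $XY\neq\emptyset$; by Lemma~\ref{fact:heyting} the set $XY$ is already known to be a final segment, so $XY\in F^{\circ}(A^{\ast})$.

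Next, for gradedness, I would define $\gamma(F):=\min\{|\alpha|:\alpha\in F\}$ for $F\in F^{\circ}(A^{\ast})$, which is a well-defined non-negative integer exactly because $F$ is non-empty. I claim $\gamma$ is a graduation in the sense of Subsection~\ref{subsection:monoid}. The identity $\gamma(XY)=\gamma(X)+\gamma(Y)$ follows from Equation~(\ref{eq:min}): the minimal elements of $XY$ are precisely the products $xy$ with $x\in Min(X)$, $y\in Min(Y)$, and since the Higman order refines length, a shortest element of $XY$ is obtained by concatenating a shortest element of $X$ with a shortest element of $Y$; more directly, $|xy|=|x|+|y|$ for words, so taking minima over the two factors gives additivity. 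Finally $\gamma(F)=0$ holds iff $\Box\in F$ iff $F=A^{\ast}$, since any final segment containing the empty word contains every word; thus $\gamma^{-1}(0)=\{{\bf 1}\}$, as required.

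For cancellativity I would argue directly on languages. Suppose $XZ=XY$ with $X,Y,Z\in F^{\circ}(A^{\ast})$; I want $Y=Z$, and by symmetry it suffices to show $Y\subseteq Z$. Fix a shortest word $\xi$ of $X$ (so $|\xi|=\gamma(X)$) and take any $\eta\in Y$. Then $\xi\eta\in XY=XZ$, so $\xi\eta=\xi'\zeta$ for some $\xi'\in X$, $\zeta\in Z$; since $\xi'\in X$ we have $|\xi'|\geq|\xi|$, and comparing lengths gives $|\zeta|=|\xi\eta|-|\xi'|\leq|\eta|$. Now $\xi$ is a shortest word of $X$, so it is minimal in $X$ for the Higman order (a word strictly below it would be strictly shorter); but $\xi'$ need not equal $\xi$, so a small extra argument is needed here --- this is the step I expect to be the main obstacle. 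The clean way around it is to use the fact from Lemma~\ref{fact:heyting} that $F(A^{\ast})$ is a Heyting algebra together with Lemma~\ref{lem:dec-property}: it is in fact cleaner to postpone cancellativity and derive it, as in Lemma~\ref{lem:dec-property}, from the decomposition property once that property is established for $F(A^{\ast})$. Alternatively, one can finish the direct argument by choosing $\xi$ to be \emph{the} word realizing a position-wise minimum --- working inside $A^{\ast}$, write $XZ=XY$, intersect both sides with the principal filter $\uparrow\xi$ for $\xi\in Min(X)$, and use that $\xi A^{\ast}\cap XY$ decomposes via $(b)$ of Lemma~\ref{lem:subcancel} --- to conclude $\zeta\leq\eta$, hence $\eta\in\,\uparrow\!\zeta\subseteq Z$ since $Z$ is a final segment, giving $Y\subseteq Z$. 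Right cancellation is symmetric, using shortest words as suffixes. This establishes all three properties and completes the proof.
```
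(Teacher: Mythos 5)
Your treatment of the submonoid property and of the graduation $\gamma(F):=\min\{|\alpha|:\alpha\in F\}$ is correct and coincides with the paper's (additivity of $\gamma$ follows directly from $|xy|=|x|+|y|$, so you do not actually need Equation~(\ref{eq:min}), which is in any case established only later in the paper). The genuine gap is in cancellativity, and it sits exactly at the step you flag as ``the main obstacle''. There is in fact no obstacle there: from $\xi\eta=\xi'\zeta$ with $\xi,\xi'\in X$ and $\xi$ of minimal length in $X$, the equidivisibility of the free monoid $A^{\ast}$ (Levi's lemma for words) together with $|\xi|\leq|\xi'|$ forces $\xi'=\xi w$ and $\eta=w\zeta$ for some $w\in A^{\ast}$. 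Hence $\zeta$ is a suffix of $\eta$, so $\zeta\leq\eta$ in the Higman ordering, and therefore $\eta\in Z$ because $Z$ is a final segment; this gives $Y\subseteq Z$ and, by symmetry, $Y=Z$. Whether $\xi'$ equals $\xi$ is irrelevant: the minimality of $|\xi|$ is used only to decide which factor of the Levi decomposition absorbs the other. This completion is precisely the paper's argument.

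Neither of your proposed workarounds repairs the gap. Deriving cancellativity from the decomposition property via Lemma~\ref{lem:dec-property} is circular in the logical order of the paper: the minimality clause of the decomposition property (statement $(3)$ in the proof of Lemma~\ref{decproperty}) is itself deduced from Lemma~\ref{lem:semicancel}, which is nothing but a restatement of a consequence of the cancellativity you are trying to prove. The second alternative (intersecting with $\uparrow\!\xi$ and invoking $(b)$ of Lemma~\ref{lem:subcancel}) is not a proof as written: applied to $\xi'\zeta\leq\xi\eta$ in $A^{\ast}$ it only yields $\xi'\leq\xi$ \emph{or} $\zeta\leq\eta$, and in the first case you must still argue that $|\xi'|=|\xi|$ (by minimality of $|\xi|$), that two prefixes of the same word having equal length coincide, and hence that $\zeta=\eta$ by cancellation in $A^{\ast}$. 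That route can be closed, but only with essentially the same word-level reasoning you declined to carry out; supply the equidivisibility step above and the direct argument finishes without any detour.
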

\begin{proof}
 Set $\gamma (X):= \Min \{ \vert x\vert : x\in X \}$ for every $X\in F^{\circ}(A^{\ast})$.  This is a graduation. 
 
 Let $X,Y,Z$ be three elements in $F^{\circ}( A^{\ast })$ Then

$$XY=XZ\Longrightarrow Y=Z   \eqno(1) $$
and 
$$ YX=ZX\Longrightarrow Y=Z.    \eqno(2) $$\\
Indeed, let $x\in X$ such that $\left| x\right| = \gamma(X)$.  Let $y\in Y.$ Since $xy\in XY$,  there exists $x^{\prime }\in X$ and  $z\in Z$ such that $xy=x^{\prime }z.$ From the equidivisibility
property of $A^{\ast}$ and the fact that $\left| x\right| \leq \left| x^{\prime
}\right|,$we have $z\leq y.$ Since $Z$ is a final segment, it
follows that $y\in Z.$ Hence $Y\subseteq Z.$ Similarly we get
$Z\subseteq Y$ proving $Y=Z$. By the same argument we prove
$\left( 2\right) .$  \end{proof}
Since $F^{\circ} (A^{\ast})$ is cancellative,  it  satisfies $(a)$ of Lemma \ref{lem:subcancel}, that is:
\begin{lemma}\label{lem:semicancel} Let $X,Y, X', Y'$ be non-empty final segments of $A^{\ast}$. 
If $XY=X'Y'$ with $X\subseteq X'$ and $Y\subseteq Y'$ then $X=X'$ and $Y=Y'$.
\end{lemma}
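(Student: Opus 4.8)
The plan is to observe that Lemma~\ref{lem:semicancel} is nothing but the specialization of part~$(a)$ of Lemma~\ref{lem:subcancel} to the monoid $F^{\circ}(A^{\ast})$, so the only thing that needs to be checked is the hypothesis of that part, namely that $F^{\circ}(A^{\ast})$ is cancellative. But this was established in Lemma~\ref{fact:graded}, where it was shown that $F^{\circ}(A^{\ast})$ is a graded and cancellative submonoid of $F(A^{\ast})$. Hence the statement follows immediately by combining the two lemmas.

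More explicitly, I would argue as follows. Let $X,Y,X',Y'\in F^{\circ}(A^{\ast})$ with $XY=X'Y'$, $X\subseteq X'$ and $Y\subseteq Y'$. Recall that the order on $F(A^{\ast})$ (and on its submonoid $F^{\circ}(A^{\ast})$) is the reverse of inclusion; nonetheless the statement of Lemma~\ref{lem:subcancel}$(a)$ is order-theoretic and symmetric in the sense that it only needs compatibility of the order with the monoid operation, which holds for $F^{\circ}(A^{\ast})$ with either convention, so one may apply it directly with ``$\leq$'' read as ``$\supseteq$'' or, equivalently, rephrase the inclusions $X\subseteq X'$, $Y\subseteq Y'$ as $X'\leq X$, $Y'\leq Y$ in the reverse-inclusion order. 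From $XY=X'Y'$ and $X'Y'\leq XY\leq X'Y$ (using compatibility of the order with concatenation) we get $X'Y=X'Y'$, and cancellativity on the left (item~$(1)$ of Lemma~\ref{fact:graded}) yields $Y=Y'$; symmetrically, cancellativity on the right (item~$(2)$) gives $X=X'$.

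Since every ingredient — gradedness, cancellativity, compatibility of concatenation with inclusion — has already been proved, there is no real obstacle here; the only minor point requiring care is the bookkeeping around the reversal of the inclusion order, so that the monotonicity step $XY\leq X'Y$ is stated in the correct direction. I would simply spell out the two-line computation above rather than invoke Lemma~\ref{lem:subcancel}$(a)$ as a black box, since reproducing it in the inclusion notation is shorter than reconciling the order conventions.

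\begin{proof}
By Lemma~\ref{fact:graded}, $F^{\circ}(A^{\ast})$ is cancellative. Assume $XY=X'Y'$ with $X\subseteq X'$ and $Y\subseteq Y'$. Since concatenation is monotone with respect to inclusion, $X'Y'=XY\subseteq X'Y\subseteq X'Y'$, so $X'Y=X'Y'$; by left cancellativity (relation~$(1)$ in the proof of Lemma~\ref{fact:graded}), $Y=Y'$. Similarly $XY\subseteq X'Y=X'Y'\subseteq XY'$ gives, after substituting $Y=Y'$, that $XY'=X'Y'$, and right cancellativity (relation~$(2)$) yields $X=X'$.
\end{proof}
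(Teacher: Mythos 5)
Your proof is correct and follows essentially the same route as the paper, which derives this lemma directly as the instance of Lemma~\ref{lem:subcancel}$(a)$ applied to the cancellative monoid $F^{\circ}(A^{\ast})$ (Lemma~\ref{fact:graded}). Spelling out the two-line monotonicity-plus-cancellation computation in inclusion notation, as you do, is a harmless and accurate unpacking of that same argument.
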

\begin{lemma}\label{lem:product}Let $Z$ be a non-empty  antichain. If $\uparrow Z= XY$ with $X, Y\in F(A^{\ast})$ then $X= \uparrow Min (X)$ and $Y= \uparrow Min (Y)$.  \end{lemma}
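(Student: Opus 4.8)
The plan is to reduce the statement to the cancellation property of $F^{\circ}(A^{\ast})$ (Lemma~\ref{lem:semicancel}), by producing out of any factorization $\uparrow Z = XY$ a second factorization $(\uparrow Min(X))(\uparrow Min(Y))$ of $\uparrow Z$ that sits componentwise inside the first. Once the two factorizations have the same product and the inclusions $\uparrow Min(X)\subseteq X$, $\uparrow Min(Y)\subseteq Y$ hold, cancellation forces equality, which is exactly the desired conclusion $X=\uparrow Min(X)$, $Y=\uparrow Min(Y)$.

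To carry this out I would first record that, since $Z$ is a non-empty antichain, $Min(\uparrow Z)=Z$ (this was already noted: an element of $\uparrow Z$ lying below some $z\in Z$ is comparable to $z$, hence equal to it). Applying Equation~(\ref{eq:min}) to the factorization $\uparrow Z=XY$ then gives $Z = Min(XY)=Min(X)\,Min(Y)$. Next, set $X':=\uparrow Min(X)$ and $Y':=\uparrow Min(Y)$; because $Min(X)\subseteq X$ and $X$ is a final segment, $X'\subseteq X$, and similarly $Y'\subseteq Y$. Using the morphism property $\uparrow(UV)=\uparrow U\,\uparrow V$ from Equation~(\ref{eq:uparrow}) together with $Min(X)\,Min(Y)=Z$, one computes
\[
X'Y' \;=\; \uparrow Min(X)\,\uparrow Min(Y) \;=\; \uparrow\!\bigl(Min(X)\,Min(Y)\bigr) \;=\; \uparrow Z \;=\; XY .
\]
Finally, Lemma~\ref{lem:semicancel} applied to $X'Y'=XY$ with $X'\subseteq X$, $Y'\subseteq Y$ gives $X'=X$ and $Y'=Y$.

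There is no serious obstacle here, since every step is an immediate consequence of results already established; the only point requiring attention is the non-emptiness bookkeeping needed to legitimately invoke Lemma~\ref{lem:semicancel}. Here $XY=\uparrow Z$ is non-empty because $Z\neq\emptyset$, so $X$ and $Y$ are non-empty; and from $Z=Min(X)\,Min(Y)$ with $Z\neq\emptyset$ we get that $Min(X)$ and $Min(Y)$ are non-empty, whence $X'=\uparrow Min(X)$ and $Y'=\uparrow Min(Y)$ are non-empty as well. This is precisely the place where the hypothesis that $\uparrow Z$ is generated by an antichain (rather than an arbitrary final segment) is genuinely used.
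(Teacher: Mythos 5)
Your proof is correct and follows essentially the same route as the paper: apply Equation~(\ref{eq:min}) to get $Z=Min(X)\,Min(Y)$, apply $\uparrow$ and Equation~(\ref{eq:uparrow}) to recover $\uparrow Z=\uparrow Min(X)\uparrow Min(Y)=XY$, and conclude by the cancellation Lemma~\ref{lem:semicancel}. Your explicit non-emptiness bookkeeping (needed to invoke Lemma~\ref{lem:semicancel}) is a point the paper leaves implicit, and is a welcome addition.
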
 
\begin{proof}
We have  $Min (\uparrow Z) = Min (XY)= Min (X)Min (Y)$ by Equation (\ref{eq:min}).  With Equation (\ref {eq:uparrow}) this yields   $\uparrow Min (\uparrow Z)=  \uparrow (Min (X) Min (Y))=\uparrow  Min (X)\uparrow Min (Y)$.  Since $Z= Min (\uparrow Z)$, we have $\uparrow Z= \uparrow Min(X)\uparrow Min (Y)= XY$. Since $\uparrow Min (X) \subseteq X$ and $\uparrow Min (Y) \subseteq Y$, we have $\uparrow Min(X)=X$ and $\uparrow Min( Y)=Y$ by Lemma \ref{lem:semicancel}.  \end{proof}

\begin{corollary}\label{cor:free2} The one-to-one morphism $Z\mapsto \uparrow Z$ from $Ant(A^{\ast})$ into $F(A^{\ast})$ maps the irreducibles of $Ant(A^{\ast})$ and $Ant_{<\omega}(A^{\ast})$  into the irreducibles of $F(A^{\ast})$. 
\end{corollary}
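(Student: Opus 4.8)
The plan is to deduce everything from Lemma \ref{lem:product}: its effect is that any factorization of $\uparrow Z$ inside $F(A^{\ast})$ is automatically a factorization coming from $Ant(A^{\ast})$, so irreducibility in $Ant(A^{\ast})$ (resp. in $Ant_{<\omega}(A^{\ast})$) transfers at once. First I would clear away the case $Z=\emptyset$: there $\uparrow Z=\emptyset$, which is irreducible in $F(A^{\ast})$ (as already noted, $\emptyset\neq A^{\ast}$ and $\emptyset=XY$ forces $X=\emptyset$ or $Y=\emptyset$, since $X,Y$ both non-empty would produce a word $\alpha\beta\in XY$). So assume $Z$ is a non-empty antichain which is irreducible in $Ant(A^{\ast})$. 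In particular $Z\neq\{\Box\}$, whence $\uparrow Z\neq\uparrow\{\Box\}=A^{\ast}$, so $\uparrow Z$ is distinct from the neutral element of $F(A^{\ast})$, as required by the definition of irreducibility.

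Now suppose $\uparrow Z=XY$ with $X,Y\in F(A^{\ast})$. By Lemma \ref{lem:product}, $X=\uparrow Min(X)$ and $Y=\uparrow Min(Y)$. Put $U:=Min(X)$ and $V:=Min(Y)$; these are antichains, and $UV$ is again an antichain. By Equation (\ref{eq:uparrow}) we get $\uparrow Z=\uparrow U\,\uparrow V=\uparrow(UV)$. Since the morphism $Z\mapsto\uparrow Z$ is one-to-one on $Ant(A^{\ast})$ and $Z$ and $UV$ are antichains with the same upper set, $Z=UV$. Irreducibility of $Z$ in $Ant(A^{\ast})$ then gives $Z=U$ or $Z=V$, hence $\uparrow Z=\uparrow U=X$ or $\uparrow Z=\uparrow V=Y$. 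Therefore $\uparrow Z$ is irreducible in $F(A^{\ast})$, which settles the claim for $Ant(A^{\ast})$.

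For $Ant_{<\omega}(A^{\ast})$ the same computation works verbatim, provided one checks that the antichains $U$ and $V$ produced above are finite whenever $Z$ is: this is immediate from the fact that $(x,y)\mapsto xy$ is a bijection of $U\times V$ onto $UV=Z$ (Equation (\ref{eq:antichains}) and the argument preceding it), so $\vert U\vert\cdot\vert V\vert=\vert Z\vert<\infty$. Hence $U,V\in Ant_{<\omega}(A^{\ast})$, irreducibility of $Z$ in $Ant_{<\omega}(A^{\ast})$ applies, and $\uparrow Z=X$ or $\uparrow Z=Y$ follows as before. I do not expect a genuine obstacle here: all the real content sits in Lemma \ref{lem:product}, which keeps factorizations of $\uparrow Z$ inside the image of the antichain monoid, together with the injectivity of $Z\mapsto\uparrow Z$; the remainder is bookkeeping with Equations (\ref{eq:uparrow}) and (\ref{eq:antichains}).
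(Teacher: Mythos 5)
Your proof is correct and follows essentially the same route as the paper's: reduce any factorization $\uparrow Z=XY$ to a factorization by antichains via Lemma \ref{lem:product} and Equation (\ref{eq:min})/(\ref{eq:uparrow}), then invoke irreducibility of $Z$ in the antichain monoid. Your explicit check that the factors $U,V$ are finite when $Z$ is (via $\vert U\vert\,\vert V\vert=\vert Z\vert$) is a small point the paper leaves implicit for the $Ant_{<\omega}(A^{\ast})$ case, and is a welcome addition.
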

\begin{proof} Let $Z$ be an irreducible of $Ant(A^{\ast})$. We claim that $\uparrow Z$ is irreducible in $F(A^{\ast})$. Indeed, suppose $\uparrow Z= XY$ with $X, Y\in F(A^{\ast})$. Since $Z$ is an antichain, $Z=Min (\uparrow Z)= Min (XY)= Min (X)Min(Y)$. Since $Z$ is irreducible in $Ant(A^{\ast})$, either  $Z= Min(X)$ or $Z= Min (Y)$. Hence, either $\uparrow Z= \uparrow Min(X)$ or $\uparrow Z= \uparrow Min (Y)$.  If $\uparrow Z= \emptyset$ then necessarily $X= \emptyset$ or $Y=\emptyset$, hence $\uparrow Z= X$ or $\uparrow Z=Y$ and thus $\uparrow Z$  is irreducible. If $\uparrow Z\not= \emptyset$, then  according to Lemma \ref {lem:product},  $X= \uparrow Min (X)$ and $Y= \uparrow Min (Y)$.  Thus, if  $\uparrow Z= \uparrow Min(X)$, $\uparrow Z= X$ and if   $\uparrow Z= \uparrow Min (Y)$, $\uparrow Z= Y$, proving that $\uparrow Z$ is irreducible.
\end{proof}

\begin{lemma}\label{decproperty}
The ordered monoid $F^{\circ}( A^{\ast })$ satisfies the
decomposition property. 
\end{lemma}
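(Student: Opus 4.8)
The goal is to show that $F^{\circ}(A^{\ast})$ satisfies the decomposition property, i.e. every pair $(v_1,v_2)\in F^{\circ}(A^{\ast})\times F^{\circ}(A^{\ast})$ is summable: it is minimal (above $v:=v_1v_2$) and it satisfies the convexity property. The plan is to treat the two halves separately and to work with words rather than abstractly, exploiting Conditions (\ref{eq:wlequv}) and (\ref{eq:uvleqw}), which $A^{\ast}$ enjoys.

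\emph{Minimality.} First I would show that $(v_1,v_2)$ is minimal above $v=v_1v_2$, i.e. there is no pair $(u_1,u_2)<(v_1,v_2)$ (here ordered by reverse inclusion, so $u_1\supseteq v_1$, $u_2\supseteq v_2$, with at least one inclusion proper) with $u_1u_2\supseteq v$, equivalently $u_1u_2\subseteq v_1v_2$. Suppose, say, $v_1\subsetneq u_1$ and $v_2\subseteq u_2$, and pick $\alpha\in u_1\setminus v_1$. Pick $\beta\in v_2$ with $|\beta|=\gamma(v_2)$ minimal. Then $\alpha\beta\in u_1u_2\subseteq v_1v_2$, so $\alpha\beta=\alpha'\beta'$ with $\alpha'\in v_1$, $\beta'\in v_2$. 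Since $\alpha\notin v_1$ while $v_1$ is a final segment, we cannot have $\alpha'\le\alpha$ "in the prefix position"; using equidivisibility of $A^{\ast}$ together with $|\beta'|\ge\gamma(v_2)=|\beta|$, I would conclude $\alpha'\le\alpha$ forcing $\alpha\in v_1$ (since $v_1$ is a final segment), a contradiction. This mirrors exactly the cancellativity argument in Lemma \ref{fact:graded}. Hence $(v_1,v_2)$ is minimal.

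\emph{Convexity.} Now let $(u_1,u_2)$ be any minimal pair above $v=v_1v_2$; I must verify (\ref{equation:3}) or (\ref{equation:4}). Since $u_1u_2\subseteq v_1v_2$, pick a minimal-length witness: choose $\alpha\in v_1$, $\beta\in v_2$ realizing $\gamma$; then $\alpha\beta\in u_1u_2$, so it factors as $\alpha\beta=\alpha'\beta'$ with $\alpha'\in u_1$, $\beta'\in u_2$. By equidivisibility of $A^{\ast}$, either $\alpha'=\alpha\delta$ (so $\alpha\le\alpha'$, whence $\alpha'\in u_1$ forces nothing new, but $\alpha\in\downarrow u_1$... ) or $\alpha=\alpha'\delta$ with $\beta'=\delta\beta$. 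The dichotomy $\alpha\le\alpha'$ versus $\alpha'\le\alpha$ (with the complementary relation on $\beta,\beta'$) should translate, after taking upward closures and using that $u_1,u_2,v_1,v_2$ are final segments, into: either $v_1\subseteq u_1$ with $u_2=$ (some final segment)$\cdot v_2$, i.e. (\ref{equation:4}) with $u_1=v_1u_1^2$ reinterpreted via reverse inclusion, or symmetrically (\ref{equation:3}). Here I would set $w:=\uparrow\{\delta\}$-type segments and check that the equations (\ref{equation:3})/(\ref{equation:4}) hold as written; the minimality of $(u_1,u_2)$ above $v$ is what pins down that the "overlap" $w$ genuinely satisfies $u_1\le v_1$ (resp. $u_2\le v_2$) rather than merely $u_1u_2\le v_1v_2$. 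Finally, combining minimality with the convexity property gives that $(v_1,v_2)$ is summable, and since the pair was arbitrary, $F^{\circ}(A^{\ast})$ satisfies the decomposition property.

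\emph{Main obstacle.} The routine part is the word-combinatorics bookkeeping with equidivisibility of $A^{\ast}$ and the final-segment condition; the genuinely delicate point is getting the \emph{convexity} clause exactly right — namely, for an \emph{arbitrary} minimal pair $(u_1,u_2)$ above $v$ (not just $(v_1,v_2)$ itself), producing the auxiliary element $u_2^1$ or $u_1^2$ and verifying all three conjuncts of (\ref{equation:3}) or (\ref{equation:4}) simultaneously. The subtlety is that a single witnessing word $\alpha\beta$ gives the dichotomy only "locally"; one must argue that the same side of the dichotomy works uniformly for all witnesses, and here the minimality of $(u_1,u_2)$ — which forbids shrinking $u_1$ or $u_2$ while staying above $v$ — is exactly the lever that forces global consistency. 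Once that uniformity is established, defining $w$ as the appropriate upward-closed "difference" and checking (\ref{equation:3})/(\ref{equation:4}) is a direct computation, and Lemma \ref{lem:semicancel} (cancellativity, already available once we invoke Lemma \ref{lem:dec-property} a posteriori, but here we can use it directly from Lemma \ref{fact:graded}) handles the equalities.
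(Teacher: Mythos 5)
Your treatment of minimality is fine: it amounts to the cancellation argument of Lemma \ref{fact:graded} (the paper simply invokes Lemma \ref{lem:semicancel} at this point), and your length-minimal $\beta$ together with equidivisibility of $A^{\ast}$ does yield $\alpha'\leq\alpha$ and the desired contradiction. The convexity half, however, has a genuine gap. First, a directional error: from $u_1u_2\subseteq v_1v_2$ you cannot conclude that a word $\alpha\beta$ with $\alpha\in v_1$, $\beta\in v_2$ lies in $u_1u_2$ --- the inclusion goes the other way, so witnesses must be drawn from $u_1u_2$ and factored inside $v_1v_2$, not vice versa. More seriously, the core of the lemma is exactly the step you defer: for a minimal pair $(U_1,U_2)$ above $V_1V_2$ with, say, $U_1\subseteq V_1$ and $U_2\nsubseteq V_2$, one must actually \emph{construct} the auxiliary final segment $U_1^{2}$ and verify all three conjuncts of (\ref{equation:8}). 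A single equidivisibility dichotomy per witness does not do this, and ``defining $w$ as the appropriate upward-closed difference'' is not a construction: different generators of $U_1$ may split at different places, and the splitting point has to be chosen coherently. The paper does this by writing $U_1=\bigcup_{i}\uparrow\alpha_i$, showing each $\alpha_i$ has a proper left factor in $V_1$, taking the \emph{least} such left factor $\beta_i$ (this specific choice is what later produces the contradiction proving $U_2\subseteq U_2^{2}$), setting $U_1^{2}:=\bigcup_i\uparrow\xi_i$ where $\alpha_i=\beta_i\xi_i$ and $U_2^{2}:=\{x: ax\in V_2 \text{ for all } a\in U_1^{2}\}$, and then invoking the minimality of $(U_1,U_2)$ twice to upgrade the inclusions $U_1\subseteq V_1U_1^{2}$, $U_2\subseteq U_2^{2}$, $U_1^{2}U_2^{2}\subseteq V_2$ to the equalities required by (\ref{equation:8}). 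None of this is supplied, or even sketched, in your proposal, so the proof is incomplete precisely at the point you yourself identify as the delicate one.
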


\begin{proof}
 According to the definition given Subsection \ref{subsection:monoid}, we neeed to prove that all pairs $(V_1, V_2)$ of  $ F^{\circ}( A^{\ast })$ are summable, that is are minimal and have the convexity property (minimality being w.r.t.  the reverse of inclusion).

$\bullet$  Minimality of  a pair $(V_1,V_2)$ means that: 
 $$V_{1}V_{2}=U_{1}U_{2}, \
\
V_{1}\subseteq U_{1}\;  \text{and}\; V_{2}\subseteq U_{2}\Longrightarrow V_{1}=U_{1} \; \text{and}  \; V_{2}=U_{2}. \eqno(3)$$ 
 for all  $U_{1},U_{2}\in F^{\circ}( A^{\ast }).$

This property readily follows from Lemma \ref{lem:semicancel}.


 $\bullet$ Convexity means that for all $U_{1},$ $U_{2} \in F^{\circ}( A^{\ast })
$ where  $(U_{1}, U_{2})$ is a minimal pair (with respect to reverse of inclusion) such that  $U_1U_2\subseteq
V:= V_{1}V_{2},$\\   we have
\noindent either:
\begin{equation}\label{equation:7}
  U_{1}U^{1}_{2}\subseteq  V_{1}, V_{1}\subseteq U_{1}  \ \mbox{ and}\ \ U_{2}=U^{1}_{2}V_{2} \ \ \
\mbox{for some}  \  U^{1}_{2}\in F^{\circ}( A^{\ast })\end{equation}
or
\begin{equation}\label{equation:8}
 U^2_{1}U_{2}\subseteq V_{2}, V_{2}\subseteq U_{2}\ \mbox{and}\
U_{1}=V_1U^{2}_{1} 	  \ \mbox{for some} 
\;  U^2_{1} \in F^{\circ}( A^{\ast }).
\end{equation}

Let $U_{1},U_{2}\in F^{\circ}( A^{\ast }) $ such that $(U_{1}, U_{2})$ is a minimal pair (with respect to reverse of inclusion) such that  $U_1U_2\subseteq
V:= V_{1}V_{2}$. 

\noindent{\bf Claim 1.} If $U_1\subseteq V_1$ and $U_2\subseteq V_2$ then  $U_1=V_1$ and $U_2=V_2$ hence both Conditions (\ref{equation:7}) and (\ref{equation:8}) hold. \\
This follows directly from the minimality of the pair $(U_1,U_2)$. 


\noindent {\bf Claim 2.} Either $%
U_{1}\subseteq V_{1}$ or $U_{2}\subseteq \ V_{2}$.\\
Indeed, If $U_{1}\nsubseteq V_{1}$ and $U_{2}\nsubseteq   V_{2},$ then let $%
u_1\in  U_{1}\setminus V_{1}$ and $u_2 \in U_{2}\setminus V_{2}.$ We
have $u_1u_2\in U_{1}U_{2}\subseteq V_{1}V_{2}.$ Let $v_{1}\in V_{1}, $
$v_{2}\in V_{2}$ such that $u_1u_2=v_{1}v_{2}.$ The equidivisibility
property of $A^{\ast}$ implies that either $v_{1}$ is a left factor of $u_1$ or $v_2$ is a right
factor of $u_{2}$. Therefore either $v_{1}\leq u_1$ or $v_{2}\leq
u_2$ Since $V_{1}$ and $V_{2}$ are final segments of $A^{\ast }$, \
this implies $u_1\in V_{1}$ or $u_2\in V_{2} $ which contradicts the
choice of $u_1$ and $u_2$.\\
{\bf Claim 3.}  If $U_{1}\subseteq V_{1}$ and $U_{2} \nsubseteq  
V_{2} $  then  Condition (\ref{equation:8}) holds. \\
Indeed, let $u\in U_{2}\setminus V_{2}$. We may write $U_{1}=\underset{i\in I}{%
\text{ }\bigcup }Z_{i}$ where for each $i\in I,Z_{i}$ is of the
form $\left\{ x\in A^{\ast }:\alpha _{i}\leq x\right\} $ for some
$\alpha _{i}\in U_{1}.$ Observe first, for every $i\in I,\alpha _{i}$ has a
proper left factor $\gamma _{i}\in V_{1}.$ Indeed, we have $\alpha _{i}u\in U_{1}U_{2}\subseteq V_{1}$ $V_{2}.$ There are $%
\gamma _{i}\in V_{1},\delta _{i}\in V_{2}$ such that $\alpha
_{i}u=\gamma _{i}\delta _{i}.$ From the equidivisibility property,
 either $\gamma _{i}$ is a left factor of $\alpha
_{i}$ or $\alpha _{i}$ is a left factor of $\ \gamma _{i}.$ The
later relation is impossible, since we would get $\delta _{i}\leq
u$ and then since $V_{2}$ is a final segment of $A^{\ast },$ this
would imply $u\in V_{2}.$ Since $A^{\ast }$ is cancellative, the
choice of $u$ ensures that $\gamma _{i}$ is a proper left factor
of $\alpha _{i}$. Let $i\in I$ and let $\beta _{i}$ be the least proper left factor  of $%
\alpha _{i}$ which belongs to $V_1$. Let $\xi _{i}\in A^{\ast }$such that $\alpha
_{i}=\beta _{i}$ $\xi _{i}.$ Set $U^{2}_{1}: =\underset{i\in I}{\text{
}\bigcup }W_{i}$ where $W_{i}:=\left\{ x\in A^{\ast }:\xi _{i}\leq
x\right\} $ and $U^{2}_{2}:=\{x\in A^{\ast }:$ for all $a\in U^{2}_{1}, ax\in
V_{2}\}.$ First,  we show that $U_{1}\subseteq V_{1}U^{2}_{1}.$ Let $z\in
U_{1}.$ There exists $i \in I$ such that $\alpha _{i}\leq z.$ Thus $\alpha _{i}=\beta _{i}$ $%
\xi _{i}\leq z.$ There are $\beta _{i}^{\prime },\xi _{i}^{\prime }\in A^{\ast }$ such that $%
z=\beta _{i}^{\prime }$ $\xi _{i}^{\prime },\beta _{i}\leq \beta
_{i}^{^{\prime }}$ and $\xi _{i}\leq \xi _{i}^{^{\prime }}.$ Since $V_{1}$ 
and $U^{2}_{1}$ are final segments, we have $\beta _{i}^{^{\prime }} \in
V_{1}$ and $\xi _{i}^{\prime }\in U^{2}_1$ proving $z\in V_{1}U^{2}_{1}.$ Next, 
we have trivially $U^2_{1}U^{2}_{2}\subseteq V_{2}.$ Finally, we prove that 
$U_{2}\subseteq U^{2}_{2}.$ Suppose that is false and let $u^{\prime }\in
U_{2}\setminus U^{2}_{2}.$ This means that there is $x\in U^{2}_{1}$ such that
$xu^{\prime }\notin V_{2}.$ Let $i\in I$ such that $\xi _{i}\leq x.$ Since $V_{2}$ is a final segment and $%
\xi _{i}u'\leq x _{i}u', $we get $\xi _{i}u'\notin V_{2}.$ But $\beta _{i}\xi _{i}u'=\alpha _{i}u'\in U_{1}U_{2} \subseteq V_{1} V_{2}.$ Let $z_{1}\in V_{1}$ and $z_{2}\in
V_{2}$ such that $\beta _{i}\xi _{i}u=z_{1}z_{2}.$ Since
$\xi _{i}u' \notin V_{2},$ it follows from the
equidivisibility property that $z_{1}$ is a proper left factor of
$\beta _{i}.$ This contradicts the choice of $\beta _{i}.$
In summary,  we have $U_{1}\subseteq V_{1}U^{2}_{1}$,   $U^2_{1} U^{2}_{2}\subseteq V_{2}$, $U_{2}\subseteq U^{2}_{2}$. With the minimality of the pair $(U_1,U_2)$,  we prove  $U_{1}=V_{1}U^{2}_{1}$ and $V_2\subseteq U_2$. Indeed, we have $U_{1}\subseteq V_{1}U^{2}_{1}\subseteq V_{1}$, hence $U_{1}\subseteq V_{1}$. With $ V_{1}V_{2}\subseteq V$ we obtain $ U_{1}V_{2}\subseteq V$. With $U_{1}U_{2}\subseteq V$ we have $ V\supseteq  U_{1}V_{2}\cup U_{1}U_{2} = U_{1}(V_{2}\cup U_{2})$. The minimality of $(U_1,U_2)$ implies $U_{2}=V_{2}\cup U_{2 }$, that is $V_{2}\subseteq U_{2}$. From $U^2_{1} U^{2}_{2}\subseteq V_{2}$, we get $V=V_{1}V_{2}\supseteq V_{1}U^2_{1} U^{2}_{2} $. Since $U_{2}\subseteq U^{2}_{2}$ and $U_{1}\subseteq V_{1}U^{2}_{1}$, minimality of $(U_1,U_2)$ yields $U^{2}_{2}=U_{2}$ and $V_{1}U^{2}_{1}=U_{1}$. From $V_{2}\supseteq U^2_{1} U^{2}_{2} $ and $U^{2}_{2}=U_{2}$ we have $ V_2\supseteq U^2_{1}U_2$. Hence,  Condition (\ref{equation:8}) holds.

Similarly we prove:

\noindent {\bf Claim 4.} $U_{1}\not \subseteq V_{1}$ and $U_{2}\subseteq V_{2}$   imply  that Condition (\ref{equation:7}) holds.

Convexity property follows from these claims.  
\end{proof}

\subsection{Proof of Theorem \ref{prop1}}
According to Lemma \ref{decproperty}, 
$F^{\circ}( A^{\ast })$ satisfies the
decomposition property.  From Lemma \ref{lem:dec-property} it is equidivisible. It is graded by Lemma \ref{fact:graded}. From Levi's Lemma (Lemma \ref{levi-lemma}) it is free. \hfill $\Box$
\subsection{Proof of Theorem \ref{antichainfree}} According to Corollary \ref{cor:free2},  the set  of  irreducibles of $Ant(A^{\ast})^{\circ}$ is mapped into the set of irreducibles of $F(A^{\ast})^{\circ}$. Since  the monoid $F^{\circ}( A^{\ast })$ is free, the monoid generated by this subset of irreducibles is free (Lemma \ref{lem:free1}). Hence  $Ant(A^{\ast})^{\circ}$ and $Ant_{<\omega}(A^{\ast})^{\circ}$   are free. 
\hfill $\Box$

\begin{remark}Let $Irr_A$ be the set of non-empty irreducible members of $F(A^{\ast})$. The monoid $F(A^{\ast})\setminus \{\emptyset\}$ is isomorphic to the monoid $(Irr_A)^*$. But, as an ordered monoid, $F^{\circ}(A^{\ast})$ is not isomorphic to the ordered monoid $(Irr_A)^{\ast}$ equipped with the Higman ordering w.r.t the reverse of inclusion on $Irr_A$.  Indeed, let $A:= \{a,b\}$,  $X:= \uparrow \{a,b\}= A^*\setminus \{\Box\}$ and $Y:= \uparrow \{aa,bb\}$. Then $X$ and $Y$ are irreducible and $XX\leq Y$. \end{remark}

\begin{problem}
Describe the irreducible of $F(A^{\ast})$
\end{problem} 

For an example, if  $F= \uparrow \{u, v\}$ with  $u$  incomparable to $v$, then $F$ is  irreductible iff  $u$ and $v$ do not have a common prefix nor a common suffix.


\section{The MacNeille completion of the free monoid} 
The ordered monoid $A^*$ can be extended to a complete lattice ordered monoid by applying the MacNeille completion.The necessary notation (cf. Skornjakow \cite{Sko}, 1973), Lemma  \ref{lem1} and Theorem \ref{thm1:bandelt-pouzet} of \cite {bandelt-pouzet} are  introduced next.

 Let $X$ be a subset of $A^{\ast}$. Then
$$X^{\Delta}:= \bigcap_{x \in X} \uparrow x$$
 and 
$$X^{\nabla}:= \bigcap_{x \in X} \downarrow x$$
are the {\it upper cone} and  the {\it lower cone} respectively,  generated by $X$. 

The pair $(\Delta, \nabla)$ of mappings on $\powerset(A^{\ast})$,  the power set lattice of $A^{\ast}$, 
constitutes a Galois connection, yieldings the \emph{MacNeille completion} of $A^{\ast}$. This completion is realized  as the complete 
lattice $$\{W \subseteq A^{\ast} : W = W^{\Delta \nabla}\}$$ 
ordered by inclusion or its isomorphic copy  $$\{Y \subseteq A^{\ast}: Y = 
Y^{\nabla \Delta}\}$$ ordered by reverse inclusion. The set $A^{\ast}$ 
embeds into the former via $x \mapsto \downarrow x$ and into the 
latter via $x \mapsto \uparrow x$ ($x\in A^{\ast}$).

The completion of $A^{\ast}$ inherits its monoid structure from the power set. The cone operators preserve this multiplication as the following lemma confirms.

For reader convenience, we give the proof. 
\begin{lemma}\label {lem1}
For any subsets  $X,Y$ of  $A^{\ast}$,   
$$(XY)^{\nabla} = X^{\nabla}Y^{\nabla}, \ {\rm and}\ (XY)^{\Delta} = 
X^{\Delta} Y^{\Delta}\;  \text {if}\;  X, Y\not =\emptyset, $$
hence
$$(XY)^{\nabla \Delta} = X^{\nabla \Delta}Y^{\nabla 
\Delta}\ {\rm and}\ (XY)^{\Delta \nabla} = X^{\Delta \nabla}
Y^{\Delta \nabla}.$$
\end{lemma}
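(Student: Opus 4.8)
The plan is to establish the two cone identities $(XY)^{\nabla}=X^{\nabla}Y^{\nabla}$ and $(XY)^{\Delta}=X^{\Delta}Y^{\Delta}$ directly, by a ``uniform cut'' argument on a word of $(XY)^{\nabla}$ (resp. $(XY)^{\Delta}$), and then read off the two composite identities as formal consequences. First I would dispatch the trivial inclusions: if $u\in X^{\nabla}$ and $v\in Y^{\nabla}$ then $u\leq\alpha$ and $v\leq\beta$ for all $\alpha\in X,\beta\in Y$, so $uv\leq\alpha\beta$ by compatibility of the Higman ordering with concatenation, giving $X^{\nabla}Y^{\nabla}\subseteq (XY)^{\nabla}$; dually, using also $\alpha\leq\alpha\beta$ and $\beta\leq\alpha\beta$ (since $\Box$ is least), one gets $X^{\Delta}Y^{\Delta}\subseteq (XY)^{\Delta}$. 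All the content is in the reverse inclusions.

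For the reverse inclusion of the $\nabla$-identity, take $w\in (XY)^{\nabla}$, write $w=a_{0}\cdots a_{k-1}$, and for $0\leq i\leq k$ set $w_{[i]}:=a_{0}\cdots a_{i-1}$ and $w^{[i]}:=a_{i}\cdots a_{k-1}$, so that $w=w_{[i]}w^{[i]}$ and these are the only factorizations of $w$ into two factors. Since a shorter prefix is $\leq$ a longer one and a shorter suffix is $\leq$ a longer one for the Higman ordering, $i\mapsto w_{[i]}$ is increasing and $i\mapsto w^{[i]}$ is decreasing; hence for each $\alpha\in X$ the set $\{i:w_{[i]}\leq\alpha\}$ is an initial segment $\{0,\dots,M(\alpha)\}$ of $\{0,\dots,k\}$ (nonempty because $w_{[0]}=\Box$), and for each $\beta\in Y$ the set $\{i:w^{[i]}\leq\beta\}$ is a final segment $\{m(\beta),\dots,k\}$. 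Applying Condition (\ref{eq:wlequv}) to $w\leq\alpha\beta$ produces a factorization $w=w_{[i]}w^{[i]}$ with $w_{[i]}\leq\alpha$ and $w^{[i]}\leq\beta$, i.e. $m(\beta)\leq M(\alpha)$, for every pair. Assuming $X,Y\neq\emptyset$ (if one of them is empty, both sides equal $A^{\ast}$ since all cones contain $\Box$), the values $m^{\ast}:=\max_{\beta\in Y}m(\beta)$ and $M^{\ast}:=\min_{\alpha\in X}M(\alpha)$ are well defined, these functions landing in the finite set $\{0,\dots,k\}$, and $m^{\ast}\leq M^{\ast}$. Any $i$ with $m^{\ast}\leq i\leq M^{\ast}$ then satisfies $w_{[i]}\in X^{\nabla}$ and $w^{[i]}\in Y^{\nabla}$, so $w\in X^{\nabla}Y^{\nabla}$, completing the $\nabla$-identity.

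The identity $(XY)^{\Delta}=X^{\Delta}Y^{\Delta}$ for $X,Y\neq\emptyset$ goes through the same scheme with inequalities reversed: for $\alpha\in X$ the set $\{i:\alpha\leq w_{[i]}\}$ is a final segment of $\{0,\dots,k\}$ and for $\beta\in Y$ the set $\{i:\beta\leq w^{[i]}\}$ an initial segment, both nonempty since $\alpha\leq\alpha\beta\leq w$ and $\beta\leq\alpha\beta\leq w$; Condition (\ref{eq:uvleqw}) applied to $\alpha\beta\leq w$ forces these index sets to meet for each pair, and a common index lying in $[\max_{\alpha}\min\{i:\alpha\leq w_{[i]}\},\ \min_{\beta}\max\{i:\beta\leq w^{[i]}\}]$ exhibits $w$ in $X^{\Delta}Y^{\Delta}$. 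I would remark that the hypothesis $X,Y\neq\emptyset$ is genuinely needed here, since for $X=\emptyset\neq Y$ the left side is $A^{\ast}$ whereas $X^{\Delta}Y^{\Delta}=A^{\ast}Y^{\Delta}$ may be empty (take $Y$ with no Higman upper bound). Finally the composite identities follow: $(XY)^{\nabla\Delta}=(X^{\nabla}Y^{\nabla})^{\Delta}=X^{\nabla\Delta}Y^{\nabla\Delta}$, the second step being the $\Delta$-identity applied to the nonempty sets $X^{\nabla}\ni\Box$, $Y^{\nabla}\ni\Box$; and, for $X,Y\neq\emptyset$, $(XY)^{\Delta\nabla}=(X^{\Delta}Y^{\Delta})^{\nabla}=X^{\Delta\nabla}Y^{\Delta\nabla}$ by the $\nabla$-identity.

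The main obstacle I anticipate is exactly the passage from ``for each pair $(\alpha,\beta)$ there is a suitable factorization of $w$'' to ``there is one factorization of $w$ suitable for all pairs at once'': Conditions (\ref{eq:wlequv}) and (\ref{eq:uvleqw}) only give pairwise information. The resolution is the observation that $\{i:w_{[i]}\leq\alpha\}$ is an initial interval and $\{i:w^{[i]}\leq\beta\}$ a final interval of $\{0,\dots,k\}$, so the pairwise overlaps guaranteed by the conditions automatically force a common overlap; once this is in place, everything else (monotonicity of the prefix and suffix maps, compatibility of $\leq$ with concatenation, and the bookkeeping for empty $X$ or $Y$) is routine.
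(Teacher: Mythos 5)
Your proof is correct and follows essentially the same route as the paper's: both rest on the observation that the admissible split points of $w$ for a fixed $\alpha\in X$ form an initial interval and for a fixed $\beta\in Y$ a final interval of $\{0,\dots,k\}$, so that the pairwise overlaps supplied by Conditions (\ref{eq:wlequv}) and (\ref{eq:uvleqw}) yield a single split valid for all pairs. The paper phrases this contrapositively via the longest prefix in $X^{\nabla}$ (resp.\ shortest prefix in $X^{\Delta}$) and a contradiction, whereas you compute $m^{\ast}\leq M^{\ast}$ directly; your explicit treatment of the empty cases and of why the $\nabla\Delta$ identity needs no nonemptiness hypothesis (since $\Box\in X^{\nabla}$ always) is a welcome clarification, not a deviation.
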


\begin{proof}

First, observe that $\emptyset^{\nabla}= \emptyset ^{\Delta}= A^{\ast}$ and $(A^{\ast})^{\Delta}=\emptyset$, while $(A^{\ast})^{\nabla}:= \{\Box\}$. Further, $\emptyset Z=\emptyset$ for every subset $Z$ of $A^{\ast}$. The inclusions $X^{\nabla} Y^{\nabla} \subseteq (XY)^{\nabla}$ and
$X^{\Delta} Y^{\Delta} \subseteq (XY)^{\Delta}$ are thus immediate.

Suppose that there exists a word $w$ in $(XY)^{\nabla}$ that does not 
belong to $X^{\nabla} Y^{\nabla}$.  Then let $u$ be the longest prefix 
of $w$ from $X^{\nabla}$, and let $v$ be the longest suffix 
of $w$ from   $Y^{\nabla}$ so that  $w$  is of the form
$$w =u {a_{1}} \ldots a_{k} v$$
for some letters $a_{1}, \ldots, a_{k}$, where $k \geq 1.$ 
By the choice  of $u$ and $v$, there are words $x \in X$ and $y \in Y$ 
such that
$$u a_{1} \not \leq x\ {\rm and}\ a_{k} v \not \leq y.$$
This, however, is in conflict with
$$w = u a_{1} \ldots a_{k} v \leq xy.$$
Therefore $(XY)^{\nabla}$ equals $X^{\nabla} Y^{\nabla}.$
Finally, suppose  $z$ is a word in $(XY)^{\Delta}$ which does not 
belong to $X^{\Delta} Y^{\Delta}$, where $X$ and $Y$ are nonempty. Then the shortest  prefix 
of $z$ from $X^{\Delta}$ and the shortest suffix of $z$ from 
$Y^{\Delta}$ intersect in a nonempty subword
$$w:=a_{1} \ldots a_{k}$$ 
\noindent so that $z$ can be written as 
$$z = uwv\ {\rm with}\ uw \in X^{\Delta}\ {\rm and}\ wv \in 
Y^{\Delta}.$$
By the choice of the words $u$ and $v$, we can find words $x \in X$ and $y \in 
Y$ with  
$$x \not \leq u a_{1} \ldots a_{k-1}\ {\rm and}\ y \not \leq 
v.$$
This contradicts the hypothesis that 
$$xy \leq z = u{a_{1}} \ldots a_{k-1} a_{k} v.$$
We conclude that $(XY)^{\Delta} = X^{\Delta} Y^{\Delta}$, completing the proof.  
\end{proof}

The completion of $A^{\ast}$, realized by the upper closed sets, that we denote by $N(A^{\ast})$,  is a complete lattice in which suprema are  set-theoretic intersections, whereas infima are the closures of set-theoretic unions. The   \emph{closed union} of a family $Z_i (i\in I)$ of upper sets in $A^{\ast}$ is given by:
$$ \bigsqcup_{i\in I}Z_i= (\bigcup_{i\in I} Z_i)^{\nabla\Delta}.$$

The following result entails that the completion of $A^{\ast}$ is a  complete latttice ordered monoid (in the sense of Birkhoff, 1967 \cite{birkhoff}). 
\begin{theorem}\label{thm1:bandelt-pouzet}
For any ordered alphabet $A$, the collection  $N(A^{\ast})$ of all closed upper sets of 
words over $A^{\ast}$ is a monoid and   complete lattice such that 
the multiplication distributes over intersection  and closed unions, that is :
$$Y(\bigcap_{i \in I} Z_{i}) = \bigcap_{i \in I} YZ_{i}\ {\rm and}\ 
(\bigcap_{i \in I} Z_{i}) Y = \bigcap_{i \in I} Z_{i} Y,$$
$$Y(\bigsqcup_{i \in I} Z_{i}) = \bigsqcup_{i \in I} YZ_{i}\ {\rm and}\ 
(\bigsqcup_{i \in I} Z_{i}) Y = \bigsqcup_{i \in I} Z_{i}Y$$
for any index set $I$ and all closed upper sets $Y, Z_{i} (i \in I).$
\end{theorem}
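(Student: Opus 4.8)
The plan is to prove Theorem~\ref{thm1:bandelt-pouzet} by reducing everything to Lemma~\ref{lem1} together with the fact that $N(A^{\ast})$, realized as the set of $\nabla\Delta$-closed subsets ordered by inclusion, is a complete lattice with intersection as infimum and closed union as supremum. First I would record the lattice-theoretic facts we need: for a family $(Z_i)_{i\in I}$ of closed upper sets, $\bigcap_{i\in I}Z_i$ is again closed (an arbitrary intersection of closed sets is closed in any Galois connection), so infima in $N(A^{\ast})$ are genuine set-theoretic intersections; and $\bigsqcup_{i\in I}Z_i = (\bigcup_{i\in I}Z_i)^{\nabla\Delta}$ by definition. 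I would also note that $N(A^{\ast})$ is closed under the concatenation $YZ$: by Lemma~\ref{lem1}, $(YZ)^{\nabla\Delta} = Y^{\nabla\Delta}Z^{\nabla\Delta} = YZ$ when $Y,Z$ are closed and nonempty, and the degenerate case where one factor is empty is handled separately since $\emptyset$ and $A^{\ast}$ are themselves closed. Hence $N(A^{\ast})$ is a monoid with neutral element $A^{\ast}$.

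Next I would establish distributivity over intersections. One inclusion, $Y(\bigcap_{i\in I}Z_i) \subseteq \bigcap_{i\in I}YZ_i$, is trivial from monotonicity of concatenation. For the reverse inclusion I would use the closure identity: $\bigcap_{i\in I}YZ_i$ is a closed set, so it equals its own $\nabla\Delta$-closure, and I would compute
$$\bigcap_{i\in I} YZ_i = \bigcap_{i\in I} Y^{\Delta\nabla}Z_i^{\Delta\nabla} = \bigcap_{i\in I} (YZ_i)^{\Delta\nabla}$$
using Lemma~\ref{lem1} (in the $\Delta\nabla$ form appropriate to the upper-set realization), and then invoke the fact that the closure operator $W\mapsto W^{\Delta\nabla}$ turns arbitrary unions into closed unions and, dually, commutes appropriately with the Galois machinery so that the right-hand side collapses to $(Y(\bigcap_i Z_i))^{\Delta\nabla} = Y(\bigcap_i Z_i)$. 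Concretely, the cleanest route is: the map $Y\cdot(-)\colon N(A^{\ast})\to N(A^{\ast})$ is order-preserving, and by Lemma~\ref{lem1} together with distributivity of concatenation over arbitrary unions in $\powerset(A^{\ast})$, it has an adjoint (a residual), hence preserves all infima; that gives the intersection identity in one stroke, and symmetrically for multiplication on the right.

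For the closed-union identity I would argue similarly but in the other direction: concatenation in $\powerset(A^{\ast})$ distributes over arbitrary unions, so $Y\bigl(\bigcup_{i\in I}Z_i\bigr) = \bigcup_{i\in I}YZ_i$, and then applying the closure operator $(-)^{\nabla\Delta}$ to both sides and using that $(-)^{\nabla\Delta}$ is a closure operator (monotone, idempotent, extensive) one gets
$$Y\Bigl(\bigsqcup_{i\in I}Z_i\Bigr) = \bigl(Y(\textstyle\bigcup_i Z_i)^{\nabla\Delta}\bigr)^{\nabla\Delta} = \bigl(Y\textstyle\bigcup_i Z_i\bigr)^{\nabla\Delta} = \bigl(\textstyle\bigcup_i YZ_i\bigr)^{\nabla\Delta} = \bigsqcup_{i\in I}YZ_i,$$
where the second equality is the key step and follows from Lemma~\ref{lem1}: $Y(W^{\nabla\Delta})$ and $YW$ have the same $\nabla\Delta$-closure because $(YW^{\nabla\Delta})^{\nabla\Delta} = Y^{\nabla\Delta}W^{\nabla\Delta\nabla\Delta} = Y^{\nabla\Delta}W^{\nabla\Delta} = (YW)^{\nabla\Delta}$. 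The symmetric statement for right multiplication is identical. I would expect the main obstacle to be bookkeeping the two dual realizations of the MacNeille completion (lower-closed sets ordered by inclusion versus upper-closed sets ordered by reverse inclusion) and making sure Lemma~\ref{lem1} is invoked in the correct ($\Delta$ vs.\ $\nabla$) form each time, together with the careful handling of the empty set, for which $\emptyset Z = \emptyset$ and the cone formulas $\emptyset^{\nabla} = \emptyset^{\Delta} = A^{\ast}$ must be used to check the degenerate instances of all four displayed identities.
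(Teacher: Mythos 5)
First, a point of comparison: the paper does not actually prove this theorem — it is imported from the Bandelt--Pouzet preprint, and only Lemma~\ref{lem1} is proved here — so your proposal has to stand entirely on its own. The parts that do stand are the monoid and lattice structure and the closed-union identity: your computation $(YW^{\nabla\Delta})^{\nabla\Delta}=(Y^{\nabla\Delta}W^{\nabla\Delta})^{\nabla\Delta}=(YW)^{\nabla\Delta}$, combined with distributivity of concatenation over arbitrary unions in the power set, correctly yields $Y\bigl(\bigsqcup_i Z_i\bigr)=\bigsqcup_i YZ_i$ and its right-handed twin (modulo the slip of calling $\Delta\nabla$ "the form appropriate to the upper-set realization" — for upper closed sets the closure is $\nabla\Delta$).

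The genuine gap is in the intersection identity, which is the one substantive claim: the example in Subsection~\ref{subsection-final} shows $(X_1\cap X_2)Y\neq X_1Y\cap X_2Y$ in general, so some special feature of closed sets must enter. Your "cleanest route" runs the adjoint argument in the wrong direction: distributivity over unions says $Y\cdot(-)$ preserves all \emph{joins}, hence has an upper (right) adjoint — that is exactly the residuation the paper records for Heyting algebras — and this gives no information about preservation of meets. A monotone map between complete lattices preserves all infima iff it has a \emph{lower} adjoint, and the existence of that adjoint is equivalent to the very statement you want; nothing is gained. Your first route is also not a proof: each $YZ_i$ is already closed, so $\bigcap_i YZ_i=\bigcap_i(YZ_i)^{\nabla\Delta}$ is vacuous, and the "collapse" to $Y(\bigcap_i Z_i)$ is precisely the assertion at issue. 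The missing idea is that every closed upper set is a $\Delta$-cone, $Y=(Y^{\nabla})^{\Delta}$, and that $\Delta$ converts unions into intersections. Writing $Y=U^{\Delta}$ and $Z_i=V_i^{\Delta}$ with $U:=Y^{\nabla}$ and $V_i:=Z_i^{\nabla}$ (all nonempty, since lower cones contain the empty word or equal $A^{\ast}$), one gets, for $I\neq\emptyset$,
$$Y\Bigl(\bigcap_{i\in I}Z_i\Bigr)=U^{\Delta}\Bigl(\bigcup_{i\in I}V_i\Bigr)^{\Delta}=\Bigl(U\bigcup_{i\in I}V_i\Bigr)^{\Delta}=\Bigl(\bigcup_{i\in I}UV_i\Bigr)^{\Delta}=\bigcap_{i\in I}(UV_i)^{\Delta}=\bigcap_{i\in I}U^{\Delta}V_i^{\Delta}=\bigcap_{i\in I}YZ_i,$$
using the Galois identity $\bigcap_i V_i^{\Delta}=(\bigcup_i V_i)^{\Delta}$ and the $\Delta$-half of Lemma~\ref{lem1} twice; the degenerate cases ($I=\emptyset$, or one of the cones empty) are checked directly. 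The right-handed version is symmetric. With this replacement the argument is complete.
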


According to Theorem \ref{thm1:bandelt-pouzet}, $N(A^{\ast})$ is a submonoid of $F(A^{\ast})$ and a Heyting algebra too. Also, $N^{\circ}(A^{\ast}):=N(A^{\ast})\setminus \{\emptyset\}$ is a submonoid of   $F^{\circ}(A^{\ast})= F(A^{\ast})\setminus \{\emptyset\}$. 
\subsection{\noindent \bf Proof of Theorem \ref{thm:completionfree}} 

\begin{lemma}Let $Z\in N^{\circ}(A^{\ast})$. If $Z= XY$ with $X, Y\in F(A^{\ast})$ then $X, Y\in  
N^{\circ}(A^{\ast})$. 
\end{lemma}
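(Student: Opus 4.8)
The plan is to show that if $Z \in N^{\circ}(A^{\ast})$ factors as $Z = XY$ with $X, Y \in F(A^{\ast})$, then both $X$ and $Y$ are closed upper sets, i.e. $X = X^{\nabla\Delta}$ and $Y = Y^{\nabla\Delta}$. Since $Z$ is non-empty, neither $X$ nor $Y$ can be empty (as $\emptyset \cdot W = W\cdot \emptyset = \emptyset$ for all $W$), so in particular $X, Y \in F^{\circ}(A^{\ast})$. Thus it remains only to prove closedness.

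The key tool is Lemma \ref{lem1}: for non-empty subsets $X, Y$ of $A^{\ast}$ we have $(XY)^{\nabla\Delta} = X^{\nabla\Delta} Y^{\nabla\Delta}$. Applying this to our factorization, $Z^{\nabla\Delta} = (XY)^{\nabla\Delta} = X^{\nabla\Delta}Y^{\nabla\Delta}$. Since $Z \in N^{\circ}(A^{\ast})$ is closed, $Z = Z^{\nabla\Delta}$, so we obtain
$$XY = Z = X^{\nabla\Delta}Y^{\nabla\Delta}.$$
Now the closure operator is extensive, so $X \subseteq X^{\nabla\Delta}$ and $Y \subseteq Y^{\nabla\Delta}$; moreover $X^{\nabla\Delta}, Y^{\nabla\Delta} \in N^{\circ}(A^{\ast}) \subseteq F^{\circ}(A^{\ast})$, and $X, Y \in F^{\circ}(A^{\ast})$ as noted. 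We are therefore in the situation of Lemma \ref{lem:semicancel} (the cancellativity consequence of Lemma \ref{fact:graded}): an equality $XY = X'Y'$ of non-empty final segments with $X \subseteq X'$ and $Y \subseteq Y'$ forces $X = X'$ and $Y = Y'$. Taking $X' := X^{\nabla\Delta}$ and $Y' := Y^{\nabla\Delta}$ yields $X = X^{\nabla\Delta}$ and $Y = Y^{\nabla\Delta}$, so $X, Y \in N^{\circ}(A^{\ast})$.

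I do not anticipate a serious obstacle: the argument is essentially a two-line combination of Lemma \ref{lem1} with the cancellation property of $F^{\circ}(A^{\ast})$. The only point requiring a moment's care is the reduction to the non-empty case — one must observe at the outset that a non-empty product cannot have an empty factor, so that Lemma \ref{lem1} (which requires $X, Y \neq \emptyset$) and Lemma \ref{lem:semicancel} (stated for non-empty final segments) are both applicable. Once $Z\in N^\circ(A^\ast)$ is written as $Z=XY$ with $X,Y$ necessarily in $F^\circ(A^\ast)$, everything else is formal. This lemma then feeds directly into the proof of Theorem \ref{thm:completionfree}: it shows that any $F(A^{\ast})$-decomposition of an element of $N^{\circ}(A^{\ast})$ is already a decomposition inside $N^{\circ}(A^{\ast})$, so that the irreducibles of $N^{\circ}(A^{\ast})$ are irreducible in $F^{\circ}(A^{\ast})$, and freeness of $N^{\circ}(A^{\ast})$ follows from freeness of $F^{\circ}(A^{\ast})$ via Lemma \ref{lem:free1}.
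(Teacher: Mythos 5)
Your proof is correct and follows essentially the same route as the paper's: apply $(XY)^{\nabla\Delta}=X^{\nabla\Delta}Y^{\nabla\Delta}$ from Lemma \ref{lem1}, then cancel via Lemma \ref{lem:semicancel} using extensivity of the closure. Your explicit preliminary check that $X$ and $Y$ are non-empty (needed for both lemmas) is a point the paper leaves implicit, so your write-up is if anything slightly more careful.
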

\begin{proof} According to Lemma \ref{lem1}, 
$(XY)^{\nabla} = X^{\nabla}Y^{\nabla}$ and  
$(XY)^{\nabla \Delta} = X^{\nabla \Delta}Y^{\nabla 
\Delta}$. Since $Z=XY$ and  $Z^{\nabla \Delta}=Z$ we have $Z=  X^{\nabla \Delta}Y^{\nabla 
\Delta}$. Since $X\subseteq X^{\nabla \Delta}$,   $Y \subseteq Y^{\nabla \Delta}$
and $Z= XY$, we have $X=X^{\nabla \Delta}$ and   $Y= Y^{\nabla \Delta}$ by Lemma \ref{lem:semicancel}.  
\end{proof}
 From this lemma, the irreducible members of $N^{\circ}(A^{\ast})$ are irreducible in $F^{\circ}(A^{\ast})$. 
 According to Lemma \ref {lem:free1}, $N^{\circ}(A^{\ast})$ is free.

\section{Metric spaces over $F(A^{\ast})$}
\subsection{Basics on metric spaces over a Heyting algebra}

  The following is a brief outline of \cite {KP2}. Let $V$ be
an ordered monoid, the operation being denoted multiplicatively, the neutral element denoted by ${\bf 1}$ (denoted respectively $+$ and $0$ in \cite {KP2}). We suppose $V$ equipped  
 with an involution $-$ such that $\overline {u\cdot v}= \overline v\cdot \overline u$ for every $u,v \in V$. Let $E$ be a set. A \textit{$V$-distance} on $E$ is a map $%
d:E^{2}\longrightarrow V$ satisfying the following properties for all $%
x,y,z\in E:$

d1) $\ d(x,$ $y)={\bf1}\Longleftrightarrow x=y$, 

d2) $\ d(x,$ $y)\leq d\left( x,z\right) \cdot d\left( z,y\right) $, 

d3) $d(x,$ $y)=\overline{d\left( y,x\right) }.$

The pair $\left( E,d\right) $ is called a $V$-\emph{metric space}. If there is no
danger of confusion we will denote it by $E.$ 
These notions appear  in \cite{deza-deza} (cf. p.41) under the name of \emph{generalized metric} and \emph{generalized distance space} (with the difference that the law is denoted additively and ${\bf 1}$ is replaced by $0$).\\ 

If $V$ is a Heyting algebra (i.e. satisfies the distributivity condition given in equation (\ref{distribinfty})), a $V$-distance can be defined on $%
V$. This fact relies on the classical notion of \textit{residuation}. Let $v\in V$. Given $\beta \in V$, the sets
 $\{r \in V: v \leq r \cdot \beta\}$ and
$\{r \in V: v \leq  \beta \cdot r \}$ have least elements, that we 
denote
respectively by $\lceil v\cdot \beta^{-1}\rceil$ and $\lceil \beta^{-1} \cdot  v  \rceil$
(note that  $\overline {\lceil \beta^{-1}\cdot v \rceil} =
\lceil \bar v\cdot (\bar \beta)^{-1} \rceil$). It follows that for all
$p, q \in V$, the set
$$D(p,q):=\{r \in V : p\leq q \cdot  \bar r\;\;{\rm and}\; \; q\leq 
p\cdot r\}$$
has a least
element, namely $\lceil \bar p\cdot (\bar q)^{-1}) \rceil \vee
\lceil p^{-1}\cdot q \rceil$, that we denote by $d_V(p,q)$. As shown in \cite{JaMiPo}, the
map $(p,q) \longrightarrow d_{V}(p,q)$
is a $ V-$distance.

Let $(E,d)$ and $\left( E^{\prime },d^{\prime }\right) $ be two
$V-$metric spaces. Recall that a map $f:E\longrightarrow E^{\prime }$ is a
\emph{non-expansive map} (or a \emph{contraction}) from $(E$, $d)$ to $\left( E^{\prime },d^{\prime
}\right) $ provided that $d^{\prime }(f(x),(f(y))\leq d(x,y)$ holds for all $%
x,y,\in E$.  The map $f$ is an \textit{isometry} if $d^{\prime
}(f(x),(f(y))=d(x,y)$ for all $x,y,\in E$. We say that $E$ and
$E^{\prime }$ are \textit{isomorphic}, a fact that we denote by
$E\cong E^{\prime }$, if there is a surjective isometry from $E$
onto $E'$.

Let $\left( (E_{i}, d_{i})\right) _{i\in I}$ be a family of $V$-metric spaces. The \emph{direct product}  $\underset{i\in I}{%
\prod }\left( E_{i}, d_{i}\right) $, is the metric space $(E,d) $ where $E$ is the cartesian product  $%
\underset{i\in I}{\prod }E_{i}$ and $d$ is the  ''sup'' (or
$\ell ^{\infty }$) distance  defined
by $d\left(
\left( x_{i}\right) _{i\in I},\left( y_{i}\right) _{i\in I}\right) =%
\underset{i\in I}{\bigvee }d_{i}(x_{i},$ $y_{i})$.

For a $V$-metric space $E$, let $x$ $\in E$ and $r\in V$, we define the {\it ball} $B_{E}(x,r)$ as the set \{$%
y\in E:d\left( x,y\right) \leq r\}$. 
We say that  $E$ is \emph{convex} if the intersection of two balls $B_{E}(x_{1}$, $r_{1})$ and $B_{E}(x_{2}$, $r_{2})$ is non-empty provided that 
$d(x_{1}$, $x_{2})\leq $ $r_{1}\cdot \overline{r_{2}}$. We say that 
$E$ is  \textit{hyperconvex} if the intersection
of every family of balls ($B_{E}(x_{i}$, $r_{i})$)$_{i\in I}$ is non-empty
whenever $d(x_{i}$, $x_{j})\leq $ $r_{i}\cdot
\overline{r_{j}}$ for all $i,j\in I$.
For an example, 
\emph{$(V,d_{V})$ is a hyperconvex  $V$-metric space
and every $V$-metric space embeds isometrically into a
power of $(V,d_{V})$} \cite{JaMiPo}.  This is due to the fact that for
every $V$-metric space $(E,d)$ and for all $x,y\in E$ the
following equality holds:
\begin{equation}\label{metricsup}
d\left( x,y\right)  = \underset{
z\in E}{\bigvee } d_{V} (d(z,x),d(z,y)).
\end{equation}
The space $E$ is a \textit{retract} of $E^{\prime }$, in symbols $
E\vartriangleleft E^{\prime }$, if there are two non-expansive maps $f:
E\longrightarrow E^{\prime }$ and $g:E^{\prime
}\longrightarrow E$ such that $g\circ f=id_{E}$ (where $id_{E}$ is
the identity map on $E$). In this case,
$f$ is a  \textit{coretraction} and $g$ a \textit{%
retraction}. If $E$ is a subspace of $E^{\prime }$, then clearly $E$ is a retract of
$E^{\prime }$ if there is a non-expansive map from $E^{\prime }$ to $E$ such $%
g(x)=x $ for all $x$ $\in E.$ We can easily see that every coretraction is
an isometry.  A metric space is an \textit{absolute retract} if it is a
retract of every isometric extension. The space $E$ is said to be \textit{%
injective} if for all $V$-metric space $E^{\prime }$ and $E'', $ each
non-expansive map $f:E^{\prime }\longrightarrow E$ and every isometry $g:E^{\prime
}\longrightarrow E''$ there is a non-expansive map $h:E''\longrightarrow E$ such that $%
h\circ g=f$.  We recall that \emph{for a metric space over a Heyting algebra $V$,
the notions of absolute retract, injective, hyperconvex and retract of a
power of $(V,d_{V})$ coincide} \cite{JaMiPo}.

A non-expansive map $f: E\longrightarrow E'$ is \textit{essential} it for
every non-expansive map  $g: E'\longrightarrow E''$,  the map $g\circ f$ is
an isometry if and only if $g$ is isometry (note that, in
particular, $f$ is an
isometry). An essential non-expansive map $f$ from $E$ into an injective $V$-metric space $E'$ is called an {\it injective envelope} of $E$. We will
rather say that $E'$ is  \textit{an injective envelope}  of  $E$. The construction of  
injective envelopes is based upon the notion of\textit{
minimal metric form}. A \emph {weak metric form} is every map $
f:E\longrightarrow V$ satisfying $d_{V}(d(x,y),f(y))\leq f(y)$ for all $
x,y\in E.$ This is a \emph{metric form} if in addition $f(x) \leq d(x,y)\cdot f(y)$ for all $
x,y\in E.$  A (weak) metric form is  \textit{minimal} if there is no other  (weak) metric form $g$
satisfying $g\leq f$ (that is $g(x)\leq f(x)$ for all $x\in E$). Since every weak metric form majorizes a metric form,  the two notions of minimality coincide.  As shown in
\cite{JaMiPo} \emph{every $V$-metric space
has an injective envelope, namely the space of
minimal metric forms (cf. also Theorem 2.2 of \cite{KP2})}. From this result follows that 
 an injective envelope of a metric space $E$ is  a minimal
injective $V$-metric space containing (isometrically) $E$. 
We will use particularly the following fact:
\begin{lemma}\label{fact:fixe}
If a non-expansive map from an injective envelope of $E$ into itself  fixes $E$ pointwise it is the identity map.
\end{lemma}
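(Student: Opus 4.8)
The plan is to derive the statement from the concrete realization of the injective envelope of $E$ as the space $M(E)$ of minimal metric forms on $E$ recalled just above. Write $\epsilon\colon E\to M(E)$ for the canonical isometric embedding $a\mapsto\hat a$, where $\hat a(x):=d(x,a)$ (that this is an isometry into $M(E)$ is precisely equation (\ref{metricsup})), and recall that the distance on $M(E)$ is the sup-distance $d_{M(E)}(f,g)=\bigvee_{x\in E}d_V(f(x),g(x))$. The one auxiliary fact needed --- part of the description of $M(E)$ in \cite{JaMiPo} and \cite{KP2} --- is that the distance from a point of $M(E)$ to a base point recovers the value there of the corresponding minimal metric form:
\begin{equation}\label{eq:recover}
d_{M(E)}(\hat a,f)=f(a)\qquad\text{for every minimal metric form }f\text{ and every }a\in E.
\end{equation}
The inequality ``$\ge$'' is immediate: taking the coordinate $x=a$ in the defining supremum gives $d_{M(E)}(\hat a,f)\ge d_V(d(a,a),f(a))=d_V({\bf 1},f(a))\ge f(a)$, the last step because every element of the residual set whose least element is $d_V({\bf 1},f(a))$ already dominates $f(a)$. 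The reverse inequality, equivalently $d_V(d(x,a),f(x))\le f(a)$ for all $x\in E$, is the substantive point: it asserts that $f(a)$ lies in that residual set, one of the two defining inequalities being exactly the metric-form inequality $f(x)\le d(x,a)\cdot f(a)$ and the other being the involutive counterpart of the tight-span inequality ``$d(x,y)\le f(x)+f(y)$'' valid for (minimal) metric forms.

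Granting (\ref{eq:recover}), it is enough to treat $E'=M(E)$, with $E$ identified with $\epsilon(E)$; the general case follows either from the uniqueness of injective envelopes (\cite{JaMiPo}) or, more elementarily, by the diagram chase that uses injectivity of $E'$ and of $M(E)$ to produce non-expansive maps $h\colon M(E)\to E'$ and $k\colon E'\to M(E)$ that are the identity on $E$, observes that $k\circ h$ fixes $E$ pointwise and hence equals $id_{M(E)}$ by the case $E'=M(E)$, so that $h$ is a coretraction, thus an isometry onto an injective extension of $E$ inside $E'$, which by minimality of the envelope is all of $E'$; conjugating a non-expansive self-map of $E'$ fixing $E$ through this isomorphism then reduces to the case $E'=M(E)$. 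So let $g\colon M(E)\to M(E)$ be non-expansive with $g(\hat a)=\hat a$ for every $a\in E$, and let $f$ be an arbitrary point of $M(E)$, i.e. a minimal metric form. Then $g(f)\in M(E)$ is again a minimal metric form, so (\ref{eq:recover}) applies to it, and for every $a\in E$,
\begin{equation*}
\bigl(g(f)\bigr)(a)=d_{M(E)}\bigl(\hat a,g(f)\bigr)=d_{M(E)}\bigl(g(\hat a),g(f)\bigr)\le d_{M(E)}(\hat a,f)=f(a),
\end{equation*}
using $g(\hat a)=\hat a$ and the non-expansiveness of $g$. Hence $g(f)\le f$ pointwise on $E$; since $g(f)$ is a metric form and $f$ is minimal, this forces $g(f)=f$. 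As $f$ was arbitrary, $g=id_{M(E)}$.

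I expect the only real obstacle to be the auxiliary identity (\ref{eq:recover}) --- concretely, extracting from the definition of a minimal metric form the inequality ``$d(x,y)\le f(x)\cdot\overline{f(y)}$'' (or its involute) that the non-trivial direction requires. Once that refinement of the description of $M(E)$ is recorded --- it is implicit in the construction of injective envelopes recalled above and is made explicit in \cite{JaMiPo} and \cite{KP2} --- the rest is the purely formal computation displayed above, together with the soft reduction to $E'=M(E)$.
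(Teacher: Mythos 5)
Your proof is correct. The paper states this lemma without proof (it is the standard rigidity property of injective envelopes, implicit in the minimal-metric-form construction recalled from \cite{JaMiPo} and \cite{KP2}), and your argument is exactly the intended one: the auxiliary identity $d_{M(E)}(\hat a,f)=f(a)$ holds because the two inequalities defining membership of $f(a)$ in the residual set $D\bigl(d(x,a),f(x)\bigr)$ are precisely the metric-form inequality $f(x)\le d(x,a)\cdot f(a)$ and the weak-metric-form inequality $d(x,a)\le f(x)\cdot\overline{f(a)}$ (the latter being what the paper's garbled displayed condition ``$d_{V}(d(x,y),f(y))\leq f(y)$'' is evidently meant to say), after which non-expansiveness gives $g(f)\le f$ and minimality of $f$ forces $g(f)=f$.
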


We also note that  two injective envelopes of $E$ are   isomorphic via an isomorphism which is the identity over $E$. This allows to talk about "the" injective envelope of $E$; we will denote it by $\mathcal N(E)$. A particular injective envelope of $E$ will be called a \emph{representation} of $\mathcal N(E)$.


We include the few facts we need about injective envelopes
of two-element metric spaces ( see \cite{KP2} for proofs).

Let $V$ be a Heyting algebra and $v\in V$. Let $E:=\{x,y\}$ be a two-element $V$-metric space such that $d(x,y)=v$. We denote by $\tilde {\mathcal{N}}_v$  the  injective envelope of $E$. We give two representations of it.  Let   $\mathcal{C}_{v}$ be the set of all pairs
$(u_{1},u_{2})\in V^{2}$ such that $v\leq u_{1}\cdot  \overline {u_{2}} $. Equip this
set with the ordering induced by the product ordering on $V^{2}$ and denote by $%
\mathcal{N}_{v}$ the set of its minimal elements. Each element of $\mathcal{N}_{v}$ defines a  minimal metric form. We equip $V^{2}$
with the supremum distance:
$$d_{V^{2}}\left( (u_{1},u_{2}), (u'_{1},u'_{2})\right)
:=d_{V}(u_{1}, u'_{1})\vee d_{V}(u_{2}, u'_{2}).$$

Let $v\in V$ and $\mathcal{S}_{v}:=  \left\{ \lceil v\cdot (\beta)^{-1}\rceil 
:\beta \in V\right\}$  be the subset of $V$;  equipped with the ordering induced by
the ordering over $V$ this is  a complete lattice. According to Lemma 2.5 of \cite{KP2},  $(x_1, x_2)\in \mathcal{N}_{v}$ iff  $x_1= \lceil v\cdot (x_2)^{-1} \rceil$ and $\overline {x_2}=\lceil (x_1)^{-1}\cdot v \rceil$. This yields a correspondence between   $ \mathcal {N}_{v}$ and  $\mathcal{S}_{v}$. 
\begin{lemma} (Lemma 2.3,  Proposition 2.7 of \cite {KP2})
The space $\mathcal{N}_{v}$ equipped with the supremum
distance and the set $\mathcal{S}_{v}$  equipped with
the distance induced by the distance over $V$ are injective envelopes of   the two-element metric spaces $\{({\bf 1}, v), (v, {\bf 1})\}$ and  $\left\{{\bf 1},v\right\}$ respectively. These spaces are isometric to the injective envelope of $E:= \{x,y\}$. 
\end{lemma}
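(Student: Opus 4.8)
The plan is to reduce the statement to the general facts recalled above: every $V$-metric space has an injective envelope, obtained as its space of minimal metric forms equipped with the supremum distance, and this envelope is unique up to an isometry fixing the subspace (so in particular isometric spaces have isometric injective envelopes). Accordingly I would (i) identify $\mathcal{N}_v$, with the distance $d_{V^2}$, as the space of minimal metric forms of a two-element space; (ii) transport this to $\mathcal{S}_v$ via the bijection of Lemma 2.5 of \cite{KP2}, checking that the supremum distance becomes the distance induced from $V$; and (iii) check that the three two-element spaces occurring in the statement are pairwise isometric, so that the final assertion follows from uniqueness.

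For (i): a weak metric form $f$ on $E=\{x,y\}$ is nothing but the pair $(f(x),f(y))\in V^2$, and the distance between two metric forms is exactly the supremum distance $d_{V^2}$. Unwinding the defining inequalities of a (weak) metric form by means of d2), d3) and the residuation adjunction $p\leq r\cdot\beta \Longleftrightarrow \lceil p\cdot\beta^{-1}\rceil\leq r$, one sees that $f$ is a weak metric form exactly when the associated pair lies in $\mathcal{C}_v$ (possibly after composing with the involution, depending on whether $E$ is embedded by $z\mapsto d(x,z)$ or by $z\mapsto d(z,x)$). Minimality of a metric form in the pointwise order then coincides with minimality of the pair in the product order on $\mathcal{C}_v$, so the minimal metric forms are precisely the elements of $\mathcal{N}_v$; hence $(\mathcal{N}_v,d_{V^2})$ is an injective envelope of $E$ as soon as $E$ does embed isometrically into it. The embedding is $z\mapsto(d(x,z),d(y,z))$, whose image, by d1), is the two-element space displayed in the statement, and a short residuation computation — amounting to the identities $d_V({\bf 1},v)=v$ and $d_V({\bf 1},\overline v)=\overline v$, both consequences of the general inequality ${\bf 1}\leq v\cdot\overline v$ (that is, $d(x,y)\cdot d(y,x)\geq d(x,x)$) — shows this map is an isometry.

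For (ii) and (iii): by Lemma 2.5 of \cite{KP2} the first-coordinate projection $(x_1,x_2)\mapsto x_1$ is a bijection of $\mathcal{N}_v$ onto $\mathcal{S}_v$, with inverse $s\mapsto(s,x_2)$ where $\overline{x_2}=\lceil s^{-1}\cdot v\rceil$. The crux — which I expect to be the main obstacle — is to prove that this bijection is an isometry from $(\mathcal{N}_v,d_{V^2})$ onto $\mathcal{S}_v$ with the distance induced by $d_V$; equivalently, that for $(x_1,x_2),(x_1',x_2')\in\mathcal{N}_v$ one has $d_V(x_2,x_2')\leq d_V(x_1,x_1')$ and symmetrically, so that the join defining $d_{V^2}$ collapses onto the first coordinate. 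Since $x_2$ is recovered from $x_1$ by the order-preserving residual map $s\mapsto\lceil s^{-1}\cdot v\rceil$ (composed with the involution) and $d_V$ is itself built out of residuals, the way in is to expand $d_V(x_1,x_1')$ and $d_V(x_2,x_2')$ through the definition of $d_V$ and apply the adjunction above repeatedly; this is the only genuinely computational part of the argument. Once it is settled, note that ${\bf 1},v\in\mathcal{S}_v$ (take $\beta=v$, resp.\ $\beta={\bf 1}$, in $\lceil v\cdot\beta^{-1}\rceil$), that these are the images of $x$ and $y$, and that $d_V({\bf 1},v)=v$ as above, so $\{{\bf 1},v\}$ sits isometrically in $\mathcal{S}_v$ and $\mathcal{S}_v$ is its injective envelope. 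Since $\mathcal{N}_v$, $\mathcal{S}_v$ and the injective envelope of $E$ are then injective envelopes of pairwise isometric two-element spaces, uniqueness of the injective envelope gives that they are pairwise isometric, which completes the proof.
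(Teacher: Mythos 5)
The paper itself gives no proof of this lemma: it is quoted from \cite{KP2} (Lemma 2.3 and Proposition 2.7 there), the authors stating explicitly that the facts on injective envelopes of two-element spaces are included without proof. So there is no argument in the text to compare yours against; judged on its own, your plan is the standard one and it is sound. Identifying $\mathcal{C}_v$ with the set of weak metric forms on $E=\{x,y\}$ (a form being the pair of its values), $\mathcal{N}_v$ with the minimal ones, and invoking the general theorem that the minimal metric forms under the supremum distance constitute the injective envelope, together with uniqueness of envelopes, is exactly how this is established in \cite{KP2}. Your derivation of ${\bf 1}\leq v\cdot\overline v$ from the triangle inequality, rather than from ${\bf 1}$ being the least element, is a correct and welcome precaution.

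The one step you describe but do not carry out --- that the first-coordinate projection collapses $d_{V^2}$ onto $d_V(x_1,x_1')$ --- is indeed where all the work sits, but it succeeds by precisely the adjunction you name, in a few lines. Put $r:=d_V(x_1,x_1')$, so that $x_1\leq x_1'\cdot\overline r$ and $x_1'\leq x_1\cdot r$. From $v\leq x_1'\cdot\overline{x_2'}$ and $x_1'\leq x_1\cdot r$ one gets $v\leq x_1\cdot(r\cdot\overline{x_2'})$, hence $\overline{x_2}=\lceil (x_1)^{-1}\cdot v\rceil\leq r\cdot\overline{x_2'}$, i.e. $x_2\leq x_2'\cdot\overline r$; symmetrically, $v\leq x_1\cdot\overline{x_2}$ and $x_1\leq x_1'\cdot\overline r$ give $v\leq x_1'\cdot\overline{x_2\cdot r}$, hence $x_2'\leq x_2\cdot r$. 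Thus $r\in D(x_2,x_2')$ and $d_V(x_2,x_2')\leq d_V(x_1,x_1')$, so the join defining $d_{V^2}$ is attained on the first coordinate and the projection onto $\mathcal{S}_v$ is an isometry. One small caution: your claim that $x$ lands on ${\bf 1}$ in $\mathcal{S}_v$ amounts to $\lceil v\cdot v^{-1}\rceil={\bf 1}$, which uses that ${\bf 1}$ is the least element of $V$; Subsection 4.1 warns that this is not automatic for a general Heyting algebra, but it does hold for $V=F(A^{\ast})$, the only case the paper uses.
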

\subsection{Composition of metric spaces}
Let $(E_{1},d_{1})$ and $(E_{2},d_{2})$ be two disjoint $V$-metric spaces;  let $x_{1}\in E_{1}$ and  $x_{2}\in E_{2}$. If we endow the set $\{x_{1}, x_{2}\}$ with
a $V$-distance $d^{\prime }$, then we can define a $V$-distance $d$ on $%
E:=E_{1}\cup E_{2}$ as follows:

$\bullet$ If $x,y\in E_{i}$  with $i\in \left\{
1, 2\right\}$  then $d(x,y) = d_{i}(x,y)$; 

$\bullet$ If $x\in E_{i}$, $y\in E_{j}$ with $i,j\in \left\{ 1, 2\right\} $ and $i\neq
j$,  then 
$d(x,y) = d_{i}(x,x_{i})\cdot d^{\prime }(x_{i},x_{j})\cdot  d_{j}(x_{j},y).$ In particular, we can identify $x_{1}$ and $x_{2}$ which amounts to set $%
d^{\prime }(x_{1},x_{2})={\bf 1}$ in the above formula.

If $E_{1}$ and $E_{2}$ are not disjoint, we replace it by two disjoint
copies $E_{1}^{\prime }$, $E_{2}^{\prime }($eg $E_{i}^{\prime
}: =E_{i}\times \left\{ i\right\} )$. Identifying the corresponding elements $x_{1}^{\prime },x_{2}^{\prime
} $, we obtain a $V$-metric space that we denote $%
(E_{1},d_{1})\cdot(E_{2},d_{2})$.  Alternatively, we may suppose that $E_{1}$ and $E_{2}$ have only one element in common, say $z_{1,2}$, and we define the distance $d$ on $E_{1}\cup E_{2}$ by setting $d(x,y):= d_i(x,z_{1, 2})\cdot d_j(z_{1,2},y)$ if $x\in E_i$, $y\in E_j$, $i\not =j$, and $d(x,y):=d_i(x,y)$ if $x,y\in E_i$. 

\begin{remark}\label{remark}If $E= E_1 \cdot E_2$ is injective then  $E_1$ and $E_2$ are retract of $E$ and hence  they are injective. The converse holds if $V= F(A^{\ast})$ (see Theorem \ref{prop: collage}). 
\end{remark}
We say that a  metric space  $E$ is \emph{irreducible} if it has more than one element and $E= E_1\cdot E_2$ implies $E= E_1$ or $E= E_2$. 

%
\subsection{Transition systems as metric spaces} We refer to \cite{sakarovitch}.  Let $A$ be a set. 
A \textit{transition system} on the \emph{alphabet} $A$ is a pair $M:=(Q$,
$T)$ where $T\subseteq Q\times A\times Q.$ The elements of $Q$ are
called \textit{states} and those of $T$ \textit{transitions}. Let
$M:=\left(Q,T\right) $ and $M^{\prime }:=\left( Q^{\prime
},T^{\prime }\right) $ be two transition systems on the alphabet
$A$. A map $f:Q\longrightarrow Q^{\prime }$ is a 
\textit{morphism} of transition systems if for every transition
$(p,\alpha,q)\in T$, we have $(f(p),\alpha
, f\left( q\right)) \in T^{\prime }$. When $f$ is bijective and $%
f^{-1} $ is a morphism from $M^{\prime }$ to
$M$, we say that $f$ is an \textit{isomorphism}.
The collection of transition systems over $A$, equipped with these morphisms,
form a category and  this category has products. The \emph{graph of a transition system}  $M:= (Q, T)$ is the directed graph  with vertex set $Q$ and arcs $(x,y)$ such that $(x, a, y)\in T$ for some $a\in A$ .

%
An \textit{
automaton} $\mathcal A$ on the alphabet $A$ is given by a transition system
$M:=\left( Q,T\right)$ and two subsets $I,$ $F$ of $Q$ called the sets
of \textit{initial} and \textit{final states}. We denote the
automaton as a triple $\left( M,I,F\right)$.  A \textit{path} in
the automaton $\mathcal{A}:=\left( M,I,F\right)$ is a sequence
$c:=\left( e_{i}\right) _{i<n}$ of consecutive
transitions, that is  of transitions $e_{i}:=(q_{i},a _{i},q_{i+1})$.
The  word $\alpha:=a_{0}\cdots a_{n-1}$ is the \textit{label} of the path, the 
state $q_{0}$ is its \textit{origin} and the state $q_{n}$ its \textit{end}.
One agrees to define for each state $q$ in $Q$ a unique null path
of length $0$ with origin and end $q$. Its label is the empty word
$\Box$. A path is \textit{successful} if its origin is in $I$ and
its end is in $F$. Finally,  a word $\alpha$ on the alphabet $A$ is
\textit{accepted} by the automaton
$\mathcal{A}$ if it is the label of some successful path. The \textit{%
language accepted} by the automaton $\mathcal{A}$,  denoted by
$L_{\mathcal{A}}$,  is the set of all words accepted by
$\mathcal{A}$. Let $\mathcal{A}:=\left( M,I,F\right) $ and
$\mathcal{A}^{\prime }:= \left( M^{\prime },I^{\prime },F^{\prime
}\right) $ be two automata. A {\it morphism} from $\mathcal{A}$ to $\mathcal{A}'$ is a map $f:Q\longrightarrow Q^{\prime }
$ satisfying the two conditions:
\begin{enumerate}
\item  $f$ is  morphism from $M$ to $M^{\prime }$; 
\item  $f$ $(I)$ $\subseteq I^{\prime }$ and $f(F)\subseteq F^{\prime}$. 
\end{enumerate}
If, moreover, $f$ is bijective, $f(I)=I^{\prime },f(F)=F^{\prime }$ and $%
f^{-1}$ is also a  morphism from $\mathcal{A}^{\prime }$ to $%
\mathcal{A}$, we say that $f$ is an  \textit{isomorphism} and that
the two automata $\mathcal{A}$ and $\mathcal{A}^{\prime }$ are \textit{%
isomorphic}.

According to Lemma \ref{fact:heyting},  $F(A^{\ast})$ is a Heyting algebra (we will sometimes denote $F\cdot F'$ the concatenation  of $F$ and $F'$).  Hence,  we may  consider metric spaces over $V:= F(A^{\ast})$. To a metric space $\left( E,d\right) $ over
$V:= F(A^{\ast })$, we may associate the transition system
$M:=\left( E,T\right) $ having $E$ as   set of states and
$T:=\left\{ \left( x,a,y\right) :a\in d\left( x,y\right) \cap
A\right\} $ as   set of transitions. Notice that such a
transition system has the following properties: for all $x,y\in E$
and every $a,b\in A$ with $b\geq a$:\\
1) $\left( x,a,x\right) \in T$; \\
2) $\left( x,a,y\right) \in T$ implies $\left(
y,\overline{a},x\right) \in T$; \\
3) $\left( x,a,y\right) \in T$ implies  $\left( x,b,y\right) \in
T.$\\
We say that a transition system satisfying these properties is \textit{%
reflexive} and \textit{involutive} (cf. \cite{Sa}, \cite{KP2}). Clearly \ if $%
M:=\left( Q,T\right) $ is such a transition system, the map
$d_{M}:Q\times Q\longrightarrow F(A^{\ast })$ where $d_{M}\left(
x,y\right) $ is the language accepted by the automaton $\left(
M,\left\{ x\right\},\left\{ y\right\} \right) $ is a distance. The graph of $M$ is reflexive and symmetric. We
have the following:

\begin{lemma}\label{lem:metric-transition}
Let $\left( E,d\right)$ be a metric space over
$F\left( A^{\ast }\right)$.  The following properties are
equivalent: 
\begin{enumerate}[(i)]
\item The map $d$ is of the form $d_{M}$ for some
reflexive and involutive transition system $M:=(E,T)$; 
\item  For all $\alpha,\beta \in A^{\ast }$ and  $x$,
$y$ $\in E$, if $\alpha\beta \in d\left( x,y\right)$, then there
is some $z\in E$ such that $\alpha \in d\left( x,z\right)$ and
$\beta \in d\left( z,y\right)$.\end{enumerate}
\end{lemma}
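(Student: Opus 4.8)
The plan is to prove the two implications separately, with the substance concentrated in $(ii) \Rightarrow (i)$.

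For $(i) \Rightarrow (ii)$: assume $d = d_M$ for some reflexive and involutive transition system $M = (E,T)$, and suppose $\alpha\beta \in d(x,y) = d_M(x,y)$. Then there is a successful path $x = q_0, q_1, \dots, q_N = y$ in $M$ with label $\alpha\beta$, necessarily of length $N = |\alpha| + |\beta|$ since every transition of a path contributes exactly one letter. Put $z := q_{|\alpha|}$. The initial part of the path, from $q_0$ to $q_{|\alpha|}$, has label $\alpha$, so $\alpha \in d_M(x,z) = d(x,z)$, and the final part, from $q_{|\alpha|}$ to $q_N$, has label $\beta$, so $\beta \in d_M(z,y) = d(z,y)$; this is exactly condition $(ii)$. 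The degenerate cases $\alpha = \Box$ and $\beta = \Box$ are covered as well, with $z = x$, respectively $z = y$.

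For $(ii) \Rightarrow (i)$: I would take $M := (E,T)$ to be the canonical transition system $T := \{(x,a,y) : a \in d(x,y) \cap A\}$ associated to $(E,d)$; by the discussion preceding the statement it is reflexive and involutive, so $d_M$ is a $V$-distance, and it suffices to show $d_M = d$. First I would check the inclusion $d_M(x,y) \subseteq d(x,y)$, which holds without using $(ii)$: given $\alpha = a_0 \cdots a_{n-1} \in d_M(x,y)$ witnessed by a path $x = q_0, \dots, q_n = y$, iterating axiom d2) — here one must keep in mind that $\leq$ on $F(A^{\ast})$ is the reverse of inclusion — yields $d(x,y) \supseteq d(q_0,q_1) \cdots d(q_{n-1},q_n)$, and the right-hand side contains $a_0 \cdots a_{n-1} = \alpha$ because $a_i \in d(q_i,q_{i+1})$ for every $i$ (the case $n = 0$ uses $d(x,x) = {\bf 1} = A^{\ast} \ni \Box$). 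For the reverse inclusion $d(x,y) \subseteq d_M(x,y)$ I would argue by induction on $|\alpha|$ for $\alpha \in d(x,y)$: if $|\alpha| = 0$, then $\Box \in d(x,y)$, and since $\Box$ is the least word and $d(x,y)$ is a final segment, $d(x,y) = A^{\ast} = {\bf 1}$, hence $x = y$ by axiom d1), so the null path gives $\Box \in d_M(x,x)$; if $|\alpha| \geq 1$, write $\alpha = a\gamma$ with $a \in A$ and $|\gamma| = |\alpha| - 1$, apply $(ii)$ to obtain $z \in E$ with $a \in d(x,z)$ and $\gamma \in d(z,y)$, note that $(x,a,z) \in T$, and invoke the induction hypothesis to get a path from $z$ to $y$ with label $\gamma$; prepending the transition $(x,a,z)$ produces a path from $x$ to $y$ with label $\alpha$, so $\alpha \in d_M(x,y)$. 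Together the two inclusions give $d = d_M$, which is $(i)$.

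I do not expect a genuine obstacle: the argument is essentially bookkeeping about paths in automata. The two points that need care are the direction of the order in axiom d2) — so that the triangle inequality reads as a reverse-inclusion (superset) relation between languages — and the base case $|\alpha| = 0$ of the induction in $(ii) \Rightarrow (i)$, where axiom d1) is precisely what forces $x = y$ from $\Box \in d(x,y)$; the rest is a straightforward induction on the length of words.
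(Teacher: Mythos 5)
Your proof is correct. Note that the paper itself gives no proof of this lemma -- it is recalled as a known fact from the earlier work on transition systems as metric spaces (cf.\ \cite{KP2}) -- so there is nothing in the text to compare against; your argument (splitting a successful path at position $\left|\alpha\right|$ for (i)$\Rightarrow$(ii), and for (ii)$\Rightarrow$(i) taking the canonical system $T:=\{(x,a,y): a\in d(x,y)\cap A\}$, proving $d_{M}\subseteq d$ by iterating d2) and $d\subseteq d_{M}$ by induction on word length using (ii)) is the standard and expected one. You also correctly flag the two genuinely delicate points: that $\leq$ on $F(A^{\ast})$ is reverse inclusion, so d2) reads as $d(x,y)\supseteq d(x,z)d(z,y)$, and that in the base case $\Box\in d(x,y)$ forces $d(x,y)=A^{\ast}$ (a final segment containing the least word) and hence $x=y$ by d1).
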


The category of reflexive and involutive transition
systems with the morphisms defined above identify  to a subcategory of the
category having as objects the metric spaces and morphisms the non-expansive maps. Indeed:

\begin{lemma}\label{fact:morphism}  Let $M_{i}:=\left( Q_{i},T_{i}\right) \left( i=1,2\right) $
be two reflexive and involutive transition systems. A map $%
f:Q_{1}\longrightarrow Q_{2}$ is a morphism from $M_{1}$ to $M_{2}$ if only
if $f$ is a non-expansive map  from $(Q_{1},d_{M_{1}})$ to $%
(Q_{2},d_{M_{2}}).$
\end{lemma}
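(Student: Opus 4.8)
The plan is to unwind both sides of the equivalence to statements about paths and then check each implication directly. Recall that $d_{M}(x,y)$ is the set of labels of paths of $M$ from $x$ to $y$, and that the order on $V=F(A^{\ast})$ is reverse inclusion; so asserting that $f$ is non-expansive from $(Q_{1},d_{M_{1}})$ to $(Q_{2},d_{M_{2}})$ is exactly asserting
$$d_{M_{1}}(x,y)\subseteq d_{M_{2}}(f(x),f(y))\qquad\text{for all }x,y\in Q_{1},$$
i.e.\ that every path of $M_{1}$ from $x$ to $y$ has a counterpart path of $M_{2}$ from $f(x)$ to $f(y)$ carrying the same label. With this reformulation the lemma becomes the statement that such a label-preserving transport of \emph{paths} is equivalent to a label-preserving transport of \emph{single transitions}, which is intuitively clear.

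For the forward implication I would assume $f$ is a morphism of transition systems and take any path $(q_{0},a_{0},q_{1}),\dots,(q_{n-1},a_{n-1},q_{n})$ in $M_{1}$ with label $\alpha=a_{0}\cdots a_{n-1}$, origin $x=q_{0}$ and end $y=q_{n}$. By the morphism property each image triple $(f(q_{i}),a_{i},f(q_{i+1}))$ belongs to $T_{2}$, so these triples form a path of $M_{2}$ from $f(x)$ to $f(y)$ with the same label $\alpha$ (the null path, corresponding to $n=0$, transports trivially). Hence every word accepted by $(M_{1},\{x\},\{y\})$ is accepted by $(M_{2},\{f(x)\},\{f(y)\})$, giving the desired inclusion and thus non-expansiveness.

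For the converse I would assume $f$ non-expansive and take a transition $(p,a,q)\in T_{1}$; regarded as a one-step path, it witnesses $a\in d_{M_{1}}(p,q)$, so $a\in d_{M_{2}}(f(p),f(q))$ by non-expansiveness, meaning $a$ labels some path of $M_{2}$ from $f(p)$ to $f(q)$. Since each transition contributes exactly one letter to a label, the label of a path of length $n$ has length $n$; a path whose label is the one-letter word $a$ therefore has length exactly one, i.e.\ is a single transition, and since its origin is $f(p)$ and its end is $f(q)$ that transition is $(f(p),a,f(q))$. Hence $(f(p),a,f(q))\in T_{2}$ for every $(p,a,q)\in T_{1}$, so $f$ is a morphism.

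There is no real obstacle here: the argument is a short verification, and the only point worth flagging is the length bookkeeping in the converse --- a path labelled by a single letter must be a single transition because labels and path lengths agree. Note also that reflexivity and involutivity of $M_{1}$ and $M_{2}$ play no role in the equivalence itself; they are assumed only so that, by the remarks preceding Lemma~\ref{lem:metric-transition}, $d_{M_{1}}$ and $d_{M_{2}}$ are bona fide $V$-distances and the term ``non-expansive'' is meaningful.
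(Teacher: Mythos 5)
Your argument is correct and is exactly the standard verification the paper has in mind (the paper states this lemma without proof, introducing it with ``Indeed:'' as immediate): morphisms transport transitions hence paths and labels, giving the inclusion $d_{M_{1}}(x,y)\subseteq d_{M_{2}}(f(x),f(y))$, which is non-expansiveness under the reverse-inclusion order; conversely a single letter in $d_{M_{2}}(f(p),f(q))$ must label a length-one path, i.e.\ a transition. Your closing remark that reflexivity and involutivity are only needed for $d_{M_{i}}$ to be genuine $V$-distances is also accurate.
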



Injective objects satisfy the convexity property stated in $(ii)$ of Lemma \ref{lem:metric-transition}. In particular, if $F$ is a final segment of $A^{\ast }$, the distance on the injective envelope  $\mathcal N_F$ comes from a transition system. Moreover, if  $A$ is well-quasi-ordered then from Higman theorem \cite{higman}, the final
segment $F$ has a finite basis, that is, there are finitely many
words $\alpha _{0},...,\alpha _{n-1}$ such that $F=\uparrow \{
\alpha _{i}:  i<n\}$. In particular, we get:

\begin{theorem}\label{envelopeF}
For every $F\in F(A^{\ast})$ there is a transition system $M:=(Q,T)$, an initial state $x$ and a
final state $y$ such that the language accepted by the automaton
$\mathcal{A}=(M,\{x\},\{y\})$ is $F.$ Moreover, if $A$ is
well-quasi-ordered then we may choose $Q$ to be finite.
\end{theorem}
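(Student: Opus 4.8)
The plan is to produce the required automaton as the transition system carried by the injective envelope of a two‑point metric space, and to derive the finiteness clause by showing that this envelope is finite when $A$ is well‑quasi‑ordered; everything else is immediate from the facts recalled just above the statement.

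\emph{Existence.} First I would view $V:=F(A^{\ast})$ as a Heyting algebra (Lemma~\ref{fact:heyting}) in which ${\bf 1}=A^{\ast}$ is the least element, and consider the two‑element subspace $\{{\bf 1},F\}$ of $(V,d_V)$; here $d_V({\bf 1},F)=F$. Its injective envelope is $\mathcal S_F=\{\lceil F\cdot\beta^{-1}\rceil:\beta\in V\}$, an injective $V$‑metric space containing ${\bf 1}$ and $F$. Since $\mathcal S_F$ is injective it satisfies the convexity property $(ii)$ of Lemma~\ref{lem:metric-transition}, so its distance is of the form $d_M$ for the reflexive and involutive transition system $M:=(\mathcal S_F,T)$ with $T:=\{(p,a,q):a\in d_V(p,q)\cap A\}$. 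Then the automaton $\mathcal A:=(M,\{{\bf 1}\},\{F\})$ accepts, by the definition of $d_M$, the language $d_M({\bf 1},F)=d_V({\bf 1},F)=F$; renaming ${\bf 1}$ as $x$ and $F$ as $y$ gives the first assertion. (Equivalently one may start from the abstract $2$‑point space $E=(\{x,y\},d)$ with $d(x,y)=F$, which is a $V$‑metric space precisely because ${\bf 1}$ is least, and take its injective envelope $\tilde{\mathcal N}_F$.)

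\emph{Finiteness.} Now suppose $A$ is well‑quasi‑ordered. By Higman's theorem~\cite{higman}, $A^{\ast}$ is well‑quasi‑ordered, so $Min(G)$ is finite for every $G\in F(A^{\ast})$; in particular $F=\uparrow\{\alpha_0,\dots,\alpha_{n-1}\}$ for finitely many words $\alpha_i$. I would then show that the state set $\mathcal S_F$ of $M$ is already finite. By residuation, $\lceil F\cdot\beta^{-1}\rceil$ is the largest final segment $G$ with $G\beta\subseteq F$, i.e.\ $\lceil F\cdot\beta^{-1}\rceil=\{w\in A^{\ast}:wb\in F\text{ for all }b\in\beta\}$; since $F$ is a final segment and $wb\leq wb'$ whenever $b\leq b'$, this set equals $\bigcap_{b\in Min(\beta)}\{w:wb\in F\}$. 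Using Condition~(\ref{eq:wlequv}) to factor each $\alpha_i$ as a prefix times its longest suffix which is $\leq b$, one obtains $\{w:wb\in F\}=\bigcup_{i<n}\uparrow\pi_i(b)$, where each $\pi_i(b)$ is a prefix of $\alpha_i$; as $b$ ranges over $A^{\ast}$, the tuple $(\pi_0(b),\dots,\pi_{n-1}(b))$ takes only finitely many values, so $\{w:wb\in F\}$ ranges over a finite family $\mathcal L$ of final segments. Hence $\lceil F\cdot\beta^{-1}\rceil$, being an intersection of members of $\mathcal L$, ranges over a finite set, so $\mathcal S_F$ is finite. Taking $Q=\mathcal S_F$ (and $x,y$ as above) completes the proof.

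\emph{Main obstacle.} The existence part is essentially free given Lemma~\ref{lem:metric-transition} and the description of $\mathcal S_F$; the only real work is the finiteness bookkeeping of the previous paragraph. A more hands‑on alternative would build a finite reflexive involutive transition system directly from the basis $\{\alpha_i\}$ — the usual subsequence automaton of each $\alpha_i$, with all loops added, the branches glued at a common initial state $x$ and common final state $y$, and everything symmetrised — but then one must verify that symmetrising and gluing do not enlarge the accepted language beyond $\uparrow\{\alpha_i\}=F$; this check (a walk‑reduction argument, with the symmetrisation the delicate point) is exactly what realising $Q$ as $\mathcal S_F$ lets us avoid, and it is where I expect the difficulty of that route to lie.
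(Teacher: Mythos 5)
Your proof is correct and follows the paper's route exactly: the transition system is the one carried by the injective envelope $\mathcal S_F$ of the two-element space $\{{\bf 1},F\}$ inside $(V,d_V)$, whose distance comes from a reflexive involutive transition system because injective spaces satisfy condition $(ii)$ of Lemma~\ref{lem:metric-transition}, and the automaton $(\mathcal M_F,\{{\bf 1}\},\{F\})$ accepts $d_V({\bf 1},F)=F$. Your explicit verification that $\mathcal S_F=\{\lceil F\cdot\beta^{-1}\rceil:\beta\in V\}$ is finite when $F$ has a finite basis is a correct elaboration of a step the paper leaves implicit after invoking Higman's theorem.
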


Let $M_i:=(Q_i,T_i)$, resp. $G_i:=(Q_i, \mathcal E_i)$, $(i=1,2)$,  be two transition systems,  resp. graphs.
Let us suppose that they have exactly one state, resp. one vertex,  in common, say $x$.
We denote by $M_1\cdot M_2$, resp. $G_1\cdot G_2$,   the transition system $M:=(Q,T)$, resp. graph $G:= (Q, \mathcal E)$,  such that
$Q:=Q_1 \cup Q_2$ and $T:=T_1 \cup T_2$, resp. $\mathcal E:= \mathcal E_1\cup \mathcal E_2$. 


The following lemma is immediate:
 
 \begin{lemma}\label{fact:product}
Let  $M_i:=(Q_i,T_i)$, $(i=1,2)$ be  two transition systems having $x$ as
the only state in common. If $E_i$ and $G_i$  are the metric space  and graph  corresponding to $M_i$ $(i=1,2)$ then  $E_1\cdot E_2$ and $G_1\cdot G_2$ are the metric space and graph corresponding to  $M_1\cdot M_2$. 
\end{lemma}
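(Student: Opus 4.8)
The plan is to unwind, on the composite system $M:=M_1\cdot M_2=(Q_1\cup Q_2,\;T_1\cup T_2)$, the two correspondences ``transition system $\mapsto$ graph'' and ``transition system $\mapsto$ metric space'', and to check that the outcomes coincide with the defining data of $G_1\cdot G_2$ and of $E_1\cdot E_2$. The graph half is purely formal: the graph attached to $M$ has vertex set $Q_1\cup Q_2$, and $(p,q)$ is one of its arcs exactly when $(p,a,q)\in T_1\cup T_2$ for some $a\in A$, i.e. exactly when $(p,q)\in\mathcal E_1\cup\mathcal E_2$, which is the edge set of $G_1\cdot G_2$ by definition. So the work is concentrated in the metric half.

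For that half, I would first record two simple facts. Since $M_1$ and $M_2$ are reflexive and involutive, so is $M$ (a loop at every vertex and the symmetry $(p,a,q)\in T\Leftrightarrow(q,\overline a,p)\in T$ are inherited), hence $d:=d_M$ is a $V$-distance and $d(x,x)={\bf 1}=A^{\ast}=d_{M_1}(x,x)=d_{M_2}(x,x)$. Secondly, since $Q_1\cap Q_2=\{x\}$ while every transition of $M$ has both endpoints in $Q_1$ or both in $Q_2$, a path of $M$ whose states all lie in $Q_i$ can be rewritten, with the same label, as a path of $M_i$ (each loop at $x$ used by the path is replaced by the corresponding loop of $T_i$, available by reflexivity of $M_i$), so its label lies in $d_{M_i}(\cdot,\cdot)$ for the appropriate endpoints; and any path of $M$ joining $Q_1\setminus\{x\}$ to $Q_2\setminus\{x\}$ must visit $x$.

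From these it is routine to read off $d(p,q)$ in the cases of the definition of $E_1\cdot E_2$. If $p,q\in Q_i$: a path staying in $Q_i$ contributes a label in $d_{M_i}(p,q)$; a path that leaves $Q_i$ splits at its first and last visit to $x$ into an $M_i$-path $p\to x$, an $M$-path $x\to x$ and an $M_i$-path $x\to q$, so its label lies in $d_{M_i}(p,x)\cdot d_M(x,x)\cdot d_{M_i}(x,q)=d_{M_i}(p,x)\cdot A^{\ast}\cdot d_{M_i}(x,q)\subseteq d_{M_i}(p,q)$ (concatenate the $M_i$-paths $p\to x\to x\to q$, using $A^{\ast}=d_{M_i}(x,x)$); the reverse inclusion $d_{M_i}(p,q)\subseteq d(p,q)$ is trivial since $M_i$ is a subsystem of $M$, so $d(p,q)=d_{M_i}(p,q)$. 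If $p\in Q_i$ and $q\in Q_j$ with $i\ne j$: any $M$-path $p\to q$ splits at its first visit to $x$ into an $M_i$-path $p\to x$ and an $M$-path $x\to q$, whose label, by the case just done applied to $x,q\in Q_j$, lies in $d_{M_j}(x,q)$; hence $d(p,q)\subseteq d_{M_i}(p,x)\cdot d_{M_j}(x,q)$, and the reverse inclusion follows by concatenating an $M_i$-path $p\to x$ with an $M_j$-path $x\to q$. These two identities are precisely the clauses defining the distance on $E_1\cdot E_2$, so $(Q_1\cup Q_2,d_M)=E_1\cdot E_2$, which together with the graph half completes the proof. There is no genuine obstacle; the only point that deserves a word of care is the rewriting of loops at $x$ that happen to belong to both $T_1$ and $T_2$, and reflexivity makes this harmless.
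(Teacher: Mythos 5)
Your proof is correct and is precisely the verification the paper has in mind: the paper states the lemma without proof (``immediate''), and your argument --- splitting an $M_1\cdot M_2$-path at its visits to the common state $x$, using that every transition stays inside one $Q_i$, and using reflexivity to absorb loops at $x$ that belong to the other system --- is the natural way to fill in the details. The one point genuinely worth the care you give it is indeed the treatment of loops at $x$ and of the middle $x\to x$ segment via $d_{M_i}(x,x)=A^{\ast}$; nothing is missing.
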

We recall the following results of  \cite{KP2}:
\begin{theorem}(Proposition 4.7 p. 175)\label{prop: collage}
Let $M_i:=(Q_i,T_i)$, $(i=1,2)$,  be two transition systems having $x$ as
the only state in common and $M:= M_1\cdot M_2$. 
If the space $E_i:=(Q_i, d_{M_i})$ ($i=1,2$)  is injective, then $E_1\cdot E_2$ is injective. \end{theorem}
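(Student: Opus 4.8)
The plan is to show directly that the metric space $E:=E_1\cdot E_2$ (which, by Lemma \ref{fact:product}, is exactly the space attached to $M=M_1\cdot M_2$) is \emph{hyperconvex}, and then invoke the fact recalled above from \cite{JaMiPo} that, over a Heyting algebra, hyperconvexity is equivalent to injectivity. Write $E=E_1\cup E_2$ with $E_1\cap E_2=\{x\}$. Everything rests on two elementary remarks about the balls of $E$. First, for $z\in E_1$ and $w\in E_2$ one has $d(z,w)=d_1(z,x)\cdot d_2(x,w)$, and this product is contained in the final segment $d_1(z,x)=d(z,x)$ (each word of the product dominates its prefix, which lies in $d_1(z,x)$); consequently a ball $B_E(z,r)$ with $z\in E_1$ either misses $x$, and is then contained in $E_1$, or contains $x$, and then $B_E(z,r)\cap E_2=B_{E_2}(x,\rho)$, where $\rho$ is the least $s\in F(A^{\ast})$ with $d_1(z,x)\cdot s\supseteq r$ — explicitly $\rho=\{\nu:\ \mu\nu\in r\ \text{for some}\ \mu\in d_1(z,x)\}$, which is a final segment and is a genuine solution precisely because $s=A^{\ast}$ already is one (as $x$ lies in the ball). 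By symmetry the mirror statements, with $E_1$ and $E_2$ exchanged, hold too. Second, if $z_0\in E_1$, $z_1\in E_2$ and neither $B_E(z_0,r_0)$ nor $B_E(z_1,r_1)$ contains $x$, then $d(z_0,z_1)\not\le r_0\cdot\overline{r_1}$: pick $\alpha\in r_0\setminus d(z_0,x)$ and $\delta\in\overline{r_1}\setminus d(x,z_1)$ (possible, by hypothesis), and note that a factorisation $\alpha\delta=PQ$ with $P\in d_1(z_0,x)$, $Q\in d_2(x,z_1)$ would force, on comparing $|P|$ with $|\alpha|$, either $\alpha\in d_1(z_0,x)$ or $\delta\in d_2(x,z_1)$, contrary to the choices.

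Now take a compatible family of balls $(B_E(z_i,r_i))_{i\in I}$, i.e. $d(z_i,z_j)\le r_i\cdot\overline{r_j}$ for all $i,j\in I$. If $x$ lies in all of them, $x$ is the point we want. Otherwise some ball omits $x$; by the second remark, among all balls that omit $x$ the centres lie all in $E_1$, or all in $E_2$ — say all in $E_1$, the other case being symmetric — and in particular \emph{every} ball whose centre lies in $E_2$ must contain $x$. Restricting each ball to $E_1$ then turns the family into a family of balls of $E_1$: these are $B_{E_1}(z_i,r_i)$ when $z_i\in E_1$ and, by the first remark, $B_{E_1}(x,\rho_j)$ when $z_j\in E_2$. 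If this restricted family is pairwise compatible, then hyperconvexity of the injective space $E_1$ yields a common point $w^{\ast}\in E_1$; since $w^{\ast}\in B_E(z_i,r_i)\cap E_1$ for every $i$, $w^{\ast}$ lies in all the original balls, and $E$ is hyperconvex, hence injective.

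So the crux is the compatibility of the restricted family. Among the balls $B_{E_1}(z_i,r_i)$ with $z_i\in E_1$ it is just the hypothesis read inside $E_1$; among the balls $B_{E_1}(x,\rho_j)$ it is automatic, since $d_1(x,x)={\bf 1}$ is the bottom of $F(A^{\ast})$ and ${\bf 1}\le\rho_j\cdot\overline{\rho_{j'}}$ trivially; the only inequality left is $d_1(z_i,x)\le r_i\cdot\overline{\rho_j}$ for $z_i\in E_1$, $z_j\in E_2$. Here one unwinds $p\in\overline{\rho_j}$ as ``$pq\in\overline{r_j}$ for some $q\in d_2(x,z_j)$'', picks such a $q$ of \emph{minimal length}, and uses that $\alpha pq\in r_i\cdot\overline{r_j}\subseteq d_1(z_i,x)\cdot d_2(x,z_j)$ (by compatibility of $B_E(z_i,r_i)$ and $B_E(z_j,r_j)$): in any factorisation $\alpha pq=PQ$ with $P\in d_1(z_i,x)$, the minimality of $q$ forbids $P$ from overshooting $\alpha p$ into $q$, so $P$ is a prefix of $\alpha p$, and then $\alpha p\in d_1(z_i,x)$ because $d_1(z_i,x)$ is a final segment. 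This is the main obstacle: a purely combinatorial statement about how the Higman ordering, the involution, and concatenation of final segments of $A^{\ast}$ interlock — of the same flavour as Conditions (\ref{eq:wlequv})--(\ref{eq:uvleqw}) and Lemma \ref{lem1}. The only thing to keep in view is the possible value $\emptyset$ of $d$, which is harmless since a ball of radius $\emptyset$ is either the whole space or already contains $x$.

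A more categorical variant would embed $E$ in its injective envelope, extend to the envelope the two obvious retractions $E\to E_k$ (each fixing $E_k$ and collapsing the other part onto $x$) using injectivity of $E_k$, glue the two extensions into a retraction of the envelope onto $E$, and conclude from Lemma \ref{fact:fixe} that the envelope equals $E$. This is formally shorter but conceals the same difficulty: proving that the glued map is non-expansive again requires controlling how a point of the envelope ``sees'' the two pieces across the cut vertex $x$, which is exactly the content of the combinatorial lemma above.
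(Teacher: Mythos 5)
This statement is not reproved in the present paper — it is quoted from Proposition 4.7 of \cite{KP2} — so your attempt has to stand on its own. Your overall strategy (prove hyperconvexity of $E_1\cdot E_2$ by pushing a compatible family of balls onto one side of the cut vertex $x$) is sound, and your second remark is correct. But your first remark, on which everything else rests, is wrong as stated: the trace $B_E(z,r)\cap E_2$ is indeed a ball of $E_2$ centred at $x$, but its radius is not $\{\nu:\mu\nu\in r\ \text{for some}\ \mu\in d_1(z,x)\}$. As soon as $r\neq\emptyset$ and $x\in B_E(z,r)$ (so $r\subseteq d_1(z,x)$), that set contains $\Box$ (take $\mu\in r$ and $\nu=\Box$), hence the ball it defines is $\{x\}$; concretely, for $A=\{a\}$, $d_1(z,x)=\uparrow a^{2}$ and $r=\uparrow a^{5}$, your formula gives $A^{\ast}$ while the correct radius is $\uparrow a^{3}$. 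The correct radius is $\uparrow\{\nu_\delta:\delta\in r\}$, where $\nu_\delta$ is $\delta$ deprived of its \emph{shortest} prefix lying in $d_1(z,x)$; and the existence of a smallest (for inclusion) final segment $s$ with $d_1(z,x)\,s\supseteq r$ is itself a nontrivial point, which cannot follow from the Heyting-algebra axioms alone (concatenation does not distribute over intersections) and needs exactly this prefix analysis.

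The error propagates to your crux. With your formula, $\Box\in\overline{\rho_j}$, so the inequality $d_1(z_i,x)\le r_i\cdot\overline{\rho_j}$ would force $r_i\subseteq d_1(z_i,x)$, i.e. $x\in B_E(z_i,r_i)$ — precisely what fails for the balls you most need to handle; so the statement you set out to verify there is false as written. Independently, your minimality argument does not close even on its own terms: if $P$ overshoots $\alpha p$ into $q$, then $Q$ is a proper suffix of $q$ lying in $d_2(x,z_j)$, but to contradict the minimality of $q$ you would need $pQ\in\overline{r_j}$, and $pQ\le pq$ points the wrong way for a final segment. Both defects are repaired simultaneously by using the correct radius: for $\alpha\in r_i$ and $\delta\in r_j$, write $\delta=\mu_\delta\nu_\delta$ with $\mu_\delta$ the shortest prefix of $\delta$ in $d_2(z_j,x)$, factor $\alpha\overline{\delta}\in r_i\overline{r_j}\subseteq d_1(z_i,x)\,d_2(x,z_j)$ as $PQ$, and observe that $\overline{\mu_\delta}$ is the shortest suffix of $\overline{\delta}$ belonging to $d_2(x,z_j)$; then either $Q=\overline{\mu_\delta}$ and $P=\alpha\overline{\nu_\delta}$, or $P$ is a proper prefix of $\alpha\overline{\nu_\delta}$, and in both cases $\alpha\overline{\nu_\delta}\in d_1(z_i,x)$, which is what the compatibility of the restricted family requires. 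With these corrections your argument goes through.
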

\begin{corollary} (Corollary 4.9. p. 177)\label{cor:productinjective} Let $F_1$ and $F_2$ be two final segments of $A^{\ast}$. If $F_1F_2$ is non empty then $\mathcal {S}_{F_1F_2}$ is isomorphic to $\mathcal {S}_{F_1}\cdot\mathcal {S}_{F_2}$. 
\end{corollary}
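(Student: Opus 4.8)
The plan is to realize $\mathcal{S}_{F_1F_2}$ as the gluing $\mathcal{S}:=\mathcal{S}_{F_1}\cdot\mathcal{S}_{F_2}$, obtained by identifying the point $y_1$ of $\mathcal{S}_{F_1}$ with the point $x_2$ of $\mathcal{S}_{F_2}$ into a single point $z$. I will check two things: that $\mathcal{S}$ is an injective metric space containing $\{x_1,y_2\}$ isometrically at distance $F_1F_2$, and that $\mathcal{S}$ is \emph{minimal} with this property; the statement then follows from the uniqueness of the injective envelope. For the first point: since $F_1F_2\neq\emptyset$, neither $F_1$ nor $F_2$ is empty, so each $\mathcal{S}_{F_i}$ is the injective envelope of a genuine two-element space and its distance comes from a reflexive involutive transition system $\mathcal{M}_{F_i}$ (by Lemma \ref{lem:metric-transition} and the remarks following it). By Lemma \ref{fact:product} the transition system $\mathcal{M}_{F_1}\cdot\mathcal{M}_{F_2}$ (glued at $z$) has $\mathcal{S}$ as its associated metric space, and $\mathcal{S}$ is injective by Theorem \ref{prop: collage}. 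The formula for the composed distance gives $d_{\mathcal{S}}(x_1,y_2)=d_{\mathcal{S}_{F_1}}(x_1,y_1)\cdot{\bf 1}\cdot d_{\mathcal{S}_{F_2}}(x_2,y_2)=F_1F_2$; equivalently, $z$ is a cut vertex and every successful path of $(\mathcal{M}_{F_1}\cdot\mathcal{M}_{F_2},\{x_1\},\{y_2\})$ crosses $z$, its intermediate returns to $z$ contributing only factors equal to ${\bf 1}=A^{\ast}$.

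\textbf{Locating the cut point.} For minimality, I would take an injective subspace $\mathcal{S}'$ of $\mathcal{S}$ with $x_1,y_2\in\mathcal{S}'$ and show $\mathcal{S}'=\mathcal{S}$. The first step is to prove $z\in\mathcal{S}'$. Since $\mathcal{S}'$ is hyperconvex, hence convex, and $d_{\mathcal{S}'}(x_1,y_2)=F_1F_2=F_1\cdot\overline{\overline{F_2}}$, there is $w\in\mathcal{S}'$ with $d(x_1,w)\leq F_1$ and $d(w,y_2)\leq F_2$. Then $F_1F_2=d(x_1,y_2)\leq d(x_1,w)\cdot d(w,y_2)\leq F_1F_2$, so $d(x_1,w)\cdot d(w,y_2)=F_1F_2$ with $F_1\subseteq d(x_1,w)$ and $F_2\subseteq d(w,y_2)$; Lemma \ref{lem:semicancel} then gives $d(x_1,w)=F_1$ and $d(w,y_2)=F_2$, and this is exactly where the hypothesis $F_1F_2\neq\emptyset$ is needed. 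Finally, according to whether $w$ lies in the $\mathcal{S}_{F_1}$-half or the $\mathcal{S}_{F_2}$-half of $\mathcal{S}$, additivity of $d_{\mathcal{S}}$ through $z$ together with cancellativity of $F^{\circ}(A^{\ast})$ forces $w=z$ (for instance, if $w\in\mathcal{S}_{F_1}$ then $d_{\mathcal{S}_{F_1}}(w,z)\cdot F_2=F_2={\bf 1}\cdot F_2$, whence $d_{\mathcal{S}_{F_1}}(w,z)={\bf 1}$, i.e. $w=z$). Thus $z\in\mathcal{S}'$.

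\textbf{Propagating rigidity.} Next I would pick a retraction $r\colon\mathcal{S}\to\mathcal{S}'$ and the natural retraction $\pi_1\colon\mathcal{S}\to\mathcal{S}_{F_1}$ (identity on $\mathcal{S}_{F_1}$, constantly $z$ on $\mathcal{S}_{F_2}$; it is non-expansive since $X\leq XY$ in $F(A^{\ast})$, and exists in any case by Remark \ref{remark}). The composite $\pi_1\circ r$ restricted to $\mathcal{S}_{F_1}$ is a non-expansive self-map of $\mathcal{S}_{F_1}$ fixing the two-element subspace $\{x_1,y_1\}=\{x_1,z\}$, hence it is the identity by Lemma \ref{fact:fixe}. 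Therefore $\pi_1(r(p))=p$ for every $p\in\mathcal{S}_{F_1}$, which forces $r(p)=p$ (if $r(p)\in\mathcal{S}_{F_2}\setminus\{z\}$ then $\pi_1(r(p))=z\neq p$, and $r(z)=z$ already holds). Hence $\mathcal{S}_{F_1}=r(\mathcal{S}_{F_1})\subseteq\mathcal{S}'$, and symmetrically $\mathcal{S}_{F_2}\subseteq\mathcal{S}'$, so $\mathcal{S}'\supseteq\mathcal{S}_{F_1}\cup\mathcal{S}_{F_2}=\mathcal{S}$. This proves the minimality of $\mathcal{S}$, so $\mathcal{S}$ is an injective envelope of the two-element space at distance $F_1F_2$, and uniqueness of injective envelopes yields $\mathcal{S}_{F_1F_2}\cong\mathcal{S}_{F_1}\cdot\mathcal{S}_{F_2}$.

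\textbf{Expected main obstacle.} The routine part is the first paragraph, which rests on Theorem \ref{prop: collage}; I expect the minimality argument (the last two paragraphs) to be the real content. Its two load-bearing facts are cancellativity of $F^{\circ}(A^{\ast})$ (Lemma \ref{lem:semicancel}), used through convexity to pin the cut point $z$ inside an arbitrary injective subspace, and Lemma \ref{fact:fixe}, used to spread rigidity from $\{x_1,z\}$ over all of $\mathcal{S}_{F_1}$ and from $\{z,y_2\}$ over $\mathcal{S}_{F_2}$. The hypothesis $F_1F_2\neq\emptyset$ is precisely what makes cancellation available and is indispensable: $\emptyset=\emptyset\cdot F_2=F_1\cdot\emptyset$ shows the statement fails when $F_1F_2=\emptyset$.
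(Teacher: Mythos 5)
Your argument is correct, but note that the paper itself does not prove this statement: it is recalled, together with Theorem~\ref{prop: collage}, from \cite{KP2} (Corollary 4.9 there), and the authors only remark that these two facts ``express in terms of metric spaces'' the decomposition property of $F(A^{\ast})$ --- which indicates that the original proof runs through the explicit description of the injective envelope by minimal pairs (the representations $\mathcal{N}_v$ and $\mathcal{S}_v$) and the summability of the pair $(F_1,F_2)$ established in Lemma~\ref{decproperty}. Your route is genuinely different: you take injectivity of the gluing from Theorem~\ref{prop: collage} as a black box and prove only that $\mathcal{S}_{F_1}\cdot\mathcal{S}_{F_2}$ is a \emph{minimal} injective extension of the two-point space at distance $F_1F_2$, which characterizes the injective envelope up to the standard isomorphism. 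Both load-bearing steps check out: convexity of an injective subspace $\mathcal{S}'$ produces $w$ with $d(x_1,w)\leq F_1$ and $d(w,y_2)\leq F_2$, Lemma~\ref{lem:semicancel} (applicable precisely because $F_1F_2\neq\emptyset$ forces all four final segments to be non-empty) upgrades these to equalities, cancellation in $F^{\circ}(A^{\ast})$ pins $w=z$, and Lemma~\ref{fact:fixe} applied to $\pi_1\circ r$ on $\mathcal{S}_{F_1}$ (which fixes $x_1$ and $z$) propagates the rigidity over each half. The only points worth writing out explicitly in a polished version are (i) the non-expansiveness of $\pi_1$, which you correctly reduce to $X\leq XY$ in $F(A^{\ast})$, and (ii) the standard fact that a minimal injective extension \emph{is} an injective envelope (any injective space containing $E$ contains an isometric, hence injective, copy of $\mathcal{N}(E)$), which you invoke only implicitly. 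What your approach buys is independence from the machinery of minimal metric forms; what the approach via the decomposition property buys is an explicit description of the points of $\mathcal{S}_{F_1F_2}$, at the cost of the combinatorial work of Lemma~\ref{decproperty}.
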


The reader will realize that these two results expresse in terms of metric spaces the fact that $F(A^{\ast})$ satisfies the decomposition property. 

%
%
%
%
%
%
%
\subsection{\bf Proof of Theorem \ref{thm:blockdecomposition}} \label{subsection:graph}We refer to \cite{bondy-murty} for notions of graph theory, particularly to Chapter 5, for the notions of cut vertex and block decomposition. The graphs we consider are simple, with a  loop at every vertex, and can be infinite. A \emph{cut vertex} $x$ of a graph $\mathcal G$ is any vertex whose deletion increases the number of connected components of $\mathcal  G$ (hence if $\mathcal G$ has no edge, no vertex is a cut vertex); a \emph{block} is a maximal connected induced subgraph  with no cut vertex (since our graphs are reflexive, we prefer this definition to the usual one); any two blocks have at most one vertex in common; if $\mathcal G$ is connected with more than a vertex, the blocks of $\mathcal G$ induce  a decomposition of the edge set of $\mathcal G$ and are the vertices of a tree (cf. Proposition 5.3, p. 120 of \cite {bondy-murty}).

Let $F$ be  a final segment of $A^{\ast}$, let $E= (\{x,y\}, d)$ be a $2$-element metric space  such that $d(x,y)=F$ and $\tilde{\mathcal N}_F$ be its injective envelope.  With no loss of generality, we may suppose that $x= A^{\ast}$,
 $y=F$ and $\tilde{\mathcal N}_F= \mathcal S_F$. Let $\mathcal {M}_F$ be the transition system associated with  $\mathcal S_F$, let $Q$ be  its domain  and  $\mathcal G_F$ be the graph of this transition system. 
 
 We suppose that $F\not = A^{\ast}$, hence $x\not =y$. We  prove first that $F$ is irreducible if and only if  $\mathcal S_F$ is irreducible. If $F$ is not irreducible then there are  two final segments $F_1$ and $F_2$ distinct from 
$F$ such that $F=F_1F_2$. Necessarily, $F, F_1$ and $F_2$ are non-empty. According to Corollary \ref{cor:productinjective},  $\mathcal {S}_{F}$ is isomorphic to $\mathcal {S}_{F_1}\cdot\mathcal {S}_{F_2}$, hence $\mathcal S_F$ is not irreducible. Conversely, suppose that $\mathcal S_F$ is not irreducible. Let $E_1$ and $E_2$ such that $\mathcal S_F= E_1\cdot E_2$ and $z$ in their intersection. First, $x$ and $y$ do not belong  to the same  $E_i$, otherwise  we may retract $\mathcal S_F$ onto  $E_i$ by a non-expansive map sending $E_j$ ($j\not =i$) onto $z$, contradicting Lemma  \ref{fact:fixe}. Suppose $x\in E_1$ and $y\in E_2$. From Lemma \ref{fact:product},  we have $F= d_{\mathcal S_F}(x,y)= d_{E_1}(x,z)\cdot  d_{E_2}(y,z)$ hence $F$ is not irreducible. 

We prove now that $\mathcal S_F$ is irreducible if and only if 
$\mathcal G_F$ has no cut vertex.  
If $\mathcal S_F$ is not irreducible then $\mathcal S_F=E_1\cdot E_2$ for two proper subspaces of 
$\mathcal S_F$. Let $\mathcal M_i$ be the restriction of $\mathcal M_F$ to $E_i$ $(i=1,2)$. We claim that $\mathcal M= \mathcal M_1\cdot \mathcal M_2$.  Since $\mathcal S_F$ is injective, the distance $d_{\mathcal S_F}$ is equal to  the distance $d_{\mathcal M_F}$ (Lemma \ref {lem:metric-transition}).  By Remark   \ref{remark}, $E_1$ and $E_2$ are injective, hence the distance induced on $E_i$ coincides with the the distance  $d_{\mathcal M_i}$. Let $z$ with $\{z\}=E_1\cap E_2$,  $x'\in E_1\setminus \{z\}$ and $y'\in E_2\setminus \{z\}$. Since $\mathcal S_F=E_1\cdot E_2$, $d_{\mathcal S_F}(x',y')= d_{\mathcal S_F}(x',z)\cdot d_{\mathcal S_F}(z,y')$. Hence, there is no transition, thus no edge,  linking $x'$ and $y'$. This proves our claim. In particular, $z$ is a cut vertex of  $\mathcal G_F$. 

Suppose that $\mathcal {G}_F$ has a cut vertex $z$. Then $F= d(x,y)\not =\emptyset$ otherwise $\mathcal S_F=E$ and $\mathcal G_F$ has no cut vertex. 
%
%
%
%
%
We claim that since $F\not = \emptyset$, $x$ and $y$ are in the same connected component. Furthermore $\mathcal {G}_F$ is connected, neither $x$ nor $y$ is a cut vertex and every cut vertex $z$ separates $\mathcal G_F$ into two connected components, one containing $x$, the other $y$. The proof of this claim uses repeatedly Lemma \ref{fact:fixe}. If one of these assertions is false, we can define  a proper non-expansive retraction of  $\mathcal S_F$ which fixes $x$ and $y$. According to Lemma \ref{fact:fixe},  it fixes $\mathcal S_F$, contradicting the fact that it is proper. To illustrate, suppose that $z$ in a cut vertex of $\mathcal G_F$ distinct from $x$ and $y$. Let $D$ be the union of connected components containing $x$ and $y$. If there are other connected components we can retract  these components  on $z$.  Since $\mathcal {M}_F$ is reflexive   this retraction is a retraction of $\mathcal {M}_F$ onto its restriction to $D\cup \{z\}$. It induces a non-expansive map from $\mathcal {S}_F$ onto itself which fixes $x$ and $y$. According to Lemma \ref{fact:fixe},  it fixes $Q$, hence $Q= D\cup \{z\}$  contradicting the existence of other connected. components.  Since $z$ is a cut vertex  $D$ consists of two connected components $D_x$ and $D_y$. The sets $D_{x}\cup \{z\}$ and  $D_{y}\cup \{z\}$ form a covering of $Q$ into two connected subsets with no crossing edge, hence $\mathcal M_F= \mathcal M_{\restriction D_x}\cdot \mathcal M_{\restriction D_y}$. According to Fact \ref{fact:product},  $\mathcal S_F=\mathcal S_{ \restriction D_x}\cdot \mathcal S_{ \restriction D_y}$ hence $\mathcal S_F$ is not irreducible.

Suppose that $F$ is not irreducible. In this case $F$ is non-empty.  Hence ${\mathcal G}_F$ is connected.  Since $\mathcal G_F$ has a cut vertex, it has at least two blocks. The collection of blocks forms a tree. Let $C$ be the shortest path joining the block containing $x$ to the block containing $y$ and let $\tilde C$ be the graph induced on the union of blocks belonging to $C$. Since $\mathcal G_F$ is a tree with a loop at every vertex, we may retract  $\mathcal G_F$ on $\tilde C$ by a map fixing pointwise the vertices in $\tilde C$ (send each vertex $z\in \mathcal {G}_F$ on the closest vertex belonging to $\tilde C$). Since $\mathcal M_F$ is reflexive, this retraction is  a retraction from $\mathcal M_F$ onto the transition system induced  on $\tilde C$ and thus a retraction of the injective  enveloppe $\mathcal S_F$ onto the space induced on $\tilde C$. Since this retraction fixes $x$ and $y$, it fixes $Q$ (Lemma \ref{fact:fixe}), hence $Q= \tilde C$. We can enumerate the vertices of $C$ in a  sequence $C_0, \dots, C_{n-1}$ with $x\in C_0$ and $y\in C_{n-1}$, with $n\geq 2$. Let $F_i$ be the language accepted by the automaton $({\mathcal M}_{F}\restriction C_{i},\left\{ x_i\right\}, \left\{y_i\right\} )$, where $x_i:=x$ if $i=0$, $y_{i}= y$ if $i=n-1$ and  $\{x_i\}=C_{i-1} \cap C_{i}$, $\{y_i\}= C_{i} \cap C_{i+1}$, otherwise. Clearly, $F$ is the product $F_0 \dots F_{n-1}$. Also, $\mathcal S_{F}\restriction C_{i}$ is the metric space associated with the injective envelope of $(\{x_i, y_i\}, d_i)$ where $d_i(x_i, y_i)= F_i$. With this the proof is complete.  
\hfill $\Box$

\end{document}